\newtheorem{theorem}{Theorem}[section]
\newtheorem{lemma}[theorem]{Lemma}
\newtheorem{corollary}[theorem]{Corollary}
\newtheorem{example}{Example}[section]
\let\oldexercise\exercise
\renewcommand{\exercise}{\oldexercise\normalfont}
\let\oldexample\example
\renewcommand{\example}{\oldexample\normalfont}
\newcommand{\updownle}{\mathbin{\rotatebox[origin=c]{270}{$\le$}}}
\title{Multi-crossing Braids}
\date{May 8, 2018}
\author{Daishiro Nishida}
\begin{document}

\begin{abstract}
Traditionally, knot theorists have considered projections of knots where there are two strands meeting at every crossing. A multi-crossing is a crossing where more than two strands meet at a single point, such that each strand bisects the crossing. In this paper we generalize ideas in traditional braid theory to multi-crossing braids. Our main result is an extension of Alexander's Theorem. We prove that every link can be put into an $n$-crossing braid form for any even $n$, and that every link with two or more components can be put into an $n$-crossing braid form for any $n$. We find relationships between the $n$-crossing braid indices, or the number of strings necessary to represent a link in an $n$-crossing braid.
\end{abstract}

\maketitle


\section{Introduction}
\label{sec:intro}

In traditional knot theory, knots are drawn in a projection where there are two strands passing over each other at every crossing. An \emph{$n$-crossing} is a crossing where there are $n$ strands meeting at one point, with each strand bisecting the crossing. We call this crossing a \emph{multi-crossing} if $n>2$, and we call the traditional type ($n=2$) a \emph{double crossing}. The strands are labeled with the levels $1, \dots n$ from the top.

In~\cite{triple crossing}, Adams proved that every link has an $n$-crossing projection for all $n \ge 3$. This fact allows us to generalize notions in traditional knot theory to their multi-crossing versions. For example, the \emph{crossing number} $c(L)$, the minimum number of crossings in any double crossing projection of the link $L$, generalizes to the \emph{multi-crossing number} $c_n(L)$, which is the minimum number of crossings in any $n$-crossing projection of the link $L$. This gives us an infinite spectrum of crossing numbers that can be explored.

In this paper we similarly generalize ideas from braid theory to multi-crossing braids. Alexander's Theorem states that every link can be put into a braid form \cite{alexander}. This Theorem has since been proved in several ways \cite{yamada} \cite{braid index bound}. Is it true that every link can be put into an $n$-crossing braid, or a braid where every crossing is an $n$-crossing? If true, we can generalize the notion of \emph{braid index} $\beta(L)$, which is the minimum number of strings needed to represent the link $L$ as the closure of a double crossing braid. Can we define the $n$-crossing braid indices $\beta_n(L)$, and what are their properties?

In Section~\ref{sec:evenAlexander} we prove a version of Alexander's Theorem~\cite{alexander} for even multi-crossing braids. Specifically, we prove that every link can be represented as a closed $n$-crossing braid, for all even $n$. In Section~\ref{sec:tripleAlexander} we consider an equivalent of Alexander's Theorem for triple crossing braids. We prove that every link with at least two components can be represented as a closed triple-crossing braid. In Section~\ref{sec:oddAlexander} we extend this result to all odd $n$. Finally, in Section~\ref{sec:braidIndices} we find relationships between the $n$-crossing braid indices.

This paper is a part of a senior thesis completed at Williams College. I would like to thank my advisor Professor Colin Adams for his guidance throughout this year. Thanks to Daniel Vitek for suggesting this problem. He had proved Theorem~\ref{thm:evenAlexander} for $n=6,10,14,18,22,26$ using Lemma~\ref{lem:levelPosition} and computation by Mathematica. The idea to convert the problem into looking at the corresponding permutations is due to him, as is the proof of Lemma~\ref{lem:levelPosition}.

\section{Even Multi-crossing Braids}
\label{sec:evenAlexander}

In this section, we prove a result similar to Alexander's Theorem for $n$-crossing braids, where $n$ is even. Specifically, we prove the following.

\begin{theorem}
\label{thm:evenAlexander}
Every link can be represented as a closed $n$-crossing braid, for all even $n$.
\end{theorem}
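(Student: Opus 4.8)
The plan is to start from a classical closed-braid presentation of the link and then reorganize it into a diagram in which every crossing is an $n$-crossing. First I would record the one local model we need: if a single $n$-crossing is spread out inside a small disk, the $n$ participating strands enter at positions $1,\dots,n$ across the top and leave in the reversed order $n,\dots,1$ across the bottom, with every pair of the $n$ strands crossing exactly once. Thus, as a tangle, one $n$-crossing is a half-twist $\Delta_n$ on a window of $n$ consecutive strings, and the levels $1,\dots,n$ assigned to the strands at the crossing decide, at each of the $\binom{n}{2}$ resolved double crossings, which strand passes over (the one nearer the top). Conversely, any half-twist whose over/under pattern is induced by a total order on its strands is a legitimate $n$-crossing. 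In particular, at the level of strand \emph{positions} a single $n$-crossing acts by the reversal permutation $i \mapsto n+1-i$, while the level assignment is an independent piece of data that I will exploit to control which resolved crossings survive.

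With this dictionary the theorem reduces to a combinatorial statement about stacking reversals. By Alexander's Theorem~\cite{alexander} the link is the closure of a word in the generators $\sigma_i^{\pm1}$, and after a Markov stabilization we may assume there are at least $n$ strings, so it suffices to manufacture each generator as a \emph{gadget}: a vertical stack of $n$-crossings, on windows of $n$ consecutive strings, whose net permutation is the single transposition $(i,i+1)$ and whose net tangle, after cancelling all removable resolved crossings, is exactly one double crossing $\sigma_i^{\pm1}$ with every other strand running straight through. Stacking $n$-crossings composes their reversals, so I would convert the construction into two linked tasks: first realize $(i,i+1)$ as a product of $n$-windowed reversals, and then choose the levels in each factor so that, pair by pair, the two passages of every \emph{unwanted} pair of strands form a cancelling bigon (opposite over/under in the two passages), leaving only the one intended crossing.

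The parity of the reversal is the decisive arithmetic input and splits the argument. The reversal of $n$ letters is a product of $n/2$ transpositions, so for $n \equiv 2 \pmod 4$ it is an \emph{odd} permutation; the $n$-windowed reversals then generate the full symmetric group and in particular realize every transposition, which is precisely what the gadget requires. This is the regime ($n = 6,10,14,\dots$) where the construction is most direct and where a finite computation suffices to exhibit the level assignments making the spurious crossings cancel. For $n \equiv 0 \pmod 4$ the reversal is \emph{even}, so every product of $n$-windowed reversals lies in the alternating group and a single transposition is unreachable; this is a genuine parity obstruction. To clear it I would enlarge the gadget so that the routing it must perform becomes an even permutation — for instance by producing a pair of generators $\sigma_i^{\pm1}\sigma_j^{\pm1}$ at once, or by threading an auxiliary string trivially through the gadget and removing it on the closure so that it contributes no surviving crossing.

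The step I expect to absorb most of the work is the simultaneous control of routing and cancellation: one must choose the windows and the level assignments so that the composite permutation is exactly $(i,i+1)$ \emph{and} every resolved crossing except the target is paired into a cancelling bigon. This is where a level-position lemma, tracking which level each strand occupies as it passes through successive $n$-crossings, carries out the bookkeeping (\textbf{cf.}~Lemma~\ref{lem:levelPosition}), and it is also where the case $n \equiv 0 \pmod 4$ must be handled by a separate device rather than by the direct reversal count that is available when $n \equiv 2 \pmod 4$.
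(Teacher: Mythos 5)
Your overall strategy coincides with the paper's: translate each $n$-crossing into a windowed reversal permutation, use a level-position bookkeeping lemma (Lemma~\ref{lem:levelPosition}) to arrange that every resolved double crossing except the intended one cancels in a bigon, and split into the cases $n \equiv 2$ and $n \equiv 0 \pmod 4$ according to the sign of the reversal, handling the latter by manufacturing disjoint pairs of generators at once. The difficulty is that the step you pass over in one clause --- ``the $n$-windowed reversals then generate the full symmetric group and in particular realize every transposition'' --- is the actual content of the theorem. The parity count only shows that for $n \equiv 2 \pmod 4$ the reversals do not all lie in the alternating group; it does not show that the subgroup they generate contains every transposition $(i,i+1)$, nor, in the $4k$ case, every disjoint pair $(a,a+1)(b,b+1)$. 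This is where the paper spends almost all of its effort: Lemmas~\ref{lem:moveMaxPerm} through~\ref{lem:sameCycleType} show by explicit conjugation that, given on the order of $3n$ spare strands, one can transform a crossing permutation into any permutation of the same cycle type, and Lemma~\ref{lem:evenPair} produces a product of two disjoint transpositions, which is then used to cancel the $n/2$ transpositions of $\pi_1$ down to one (or two). Your appeal to ``a finite computation'' cannot close this gap, since the statement is for all even $n$; a computation for fixed $n$ is how the paper sharpens the number of strands in Theorem~\ref{thm:braidEven}, not how it proves Theorem~\ref{thm:evenAlexander}.

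A second, smaller problem: your fallback device for $n \equiv 0 \pmod 4$, ``threading an auxiliary string trivially through the gadget and removing it on the closure,'' does not address the parity obstruction. Adding a strand that contributes no surviving crossing does not change the sign of the permutation the gadget must realize, so the target remains an odd permutation and remains unreachable by even reversals. Only your first device --- producing an even number of double crossings at a time, after stabilizing so the braid word has even length and pairing consecutive crossings (with dummy crossings when two consecutive crossings share a strand) --- works, and that is exactly what the paper does.
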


We prove this theorem by starting with a link in double crossing braid form, and finding an isotopy to make it a sequence of $n$-crossings.

\subsection{Level position}

Given a collection of $m$ strings, label them $1, \dots, m$ from the left. We can then think of each crossing that occurs in these $m$ strings as a permutation of the strings. Specifically, we can define a homomorphism $\phi:B_n \rightarrow S_n$ by $\phi(\sigma_i) = \phi(\sigma_i^{-1}) = (i,i+1)$, and extend by linearity. A double crossing corresponds to a transposition $(i, i+1)$. An $n$-crossing corresponds to a permutation of the form $\pi_j = (j,j+n-1)(j+1,j+n-2)\cdots$; we call any permutation of this form a \emph{crossing permutation}. Note that there are $m-n+1$ possible $n$-crossing permutations, corresponding to each $j$ where $1 \le j \le m-n+1$.

First we show a lemma that allows us to ignore the levels of each crossing and focus on the images under $\phi$. We say a sequence of crossings is \emph{disjoint} if each string is switched by at most one crossing in the sequence. In this section we only need this lemma for $s=2$, but we state a general version which can be used in later sections for other values of $s$.

\begin{lemma}
\label{lem:levelPosition}
Let $\alpha$ be a sequence of disjoint $s$-crossings. Suppose that a product of $n$-crossing permutations in $S_m$ produces $\phi(\alpha)$. Then there exists a sequence of $n$-crossings over a $m$-string braid which produces $\alpha$.
\end{lemma}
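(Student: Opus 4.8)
The plan is to treat the given factorization $\phi(\alpha) = \pi_{j_1}\pi_{j_2}\cdots\pi_{j_k}$ as a recipe for where to place the $n$-crossings, and then to argue that the one remaining freedom — the choice of levels at each $n$-crossing — is exactly enough to recover the braid $\alpha$ itself, not merely its underlying permutation. First I would build a template: stack $n$-crossings at positions $j_1, \ldots, j_k$ from top to bottom, leaving the level assignment at each crossing undetermined. Since each $\pi_{j_l}$ is the reversal of a block of $n$ consecutive strings no matter how we layer them, every choice of levels yields a braid $T$ with $\phi(T) = \pi_{j_1}\cdots\pi_{j_k} = \phi(\alpha)$. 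Thus $T$ and $\alpha$ always induce the same permutation, so $T\alpha^{-1}$ is a pure braid, and the task reduces to choosing the levels so that this pure braid is trivial, i.e.\ so that the over/under data of $T$ matches that of $\alpha$.

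The main tool is the observation that a level assignment at a single $n$-crossing is a free choice of a total order on the $n$ participating strings, and this order determines, for that crossing, which string passes over which for every one of the $\binom{n}{2}$ pairs meeting there. So locally we may prescribe the over/under of any pair, subject only to the global constraint that the choices at different crossings each come from a single total order. Here I would use the disjointness of $\alpha$ decisively: because each string of $\alpha$ is switched by at most one $s$-crossing, each pair of strings crosses \emph{at most once} in $\alpha$, with a single prescribed layering for genuine partners and no crossing otherwise. I would then choose levels so that each genuine partner pair receives its correct over/under at the crossing where the reversal matches $\alpha$, while every other meeting of two strings in $T$ is arranged to occur in cancelling pairs — two successive $n$-crossings through which the same two strings pass with opposite layering, removable by a Reidemeister-II-type cancellation for multi-crossings. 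Carrying this out by induction on the number of $n$-crossings, peeling crossings off the top and tracking the current order of the strings, should reduce $T$ to $\alpha$.

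The step I expect to be the main obstacle is precisely the passage from \emph{permutation} to \emph{braid}: the hypothesis controls only $\phi(\alpha)$, and a braid is far from determined by its permutation, so I must show the level freedom genuinely kills the entire pure braid $T\alpha^{-1}$ and not just its linking numbers. The delicate point is global consistency — a single level choice simultaneously fixes the over/under of many pairs, so the pairs cannot be adjusted independently one at a time. I expect disjointness to be the crucial structural input that decouples these constraints: since the surviving crossings of $\alpha$ occupy disjoint blocks of strings, the correct-layering demands never conflict, and the remaining spurious crossings can be organized into cancelling pairs without interference. Verifying that this organization is always achievable — that no unremovable tangling is ever forced — is the heart of the argument.
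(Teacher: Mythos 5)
Your setup is right and you have correctly located the difficulty: the hypothesis only controls $\phi(\alpha)$, every choice of levels at the $n$-crossings yields the same permutation, and the whole content of the lemma is that the level freedom can be used to make the pure braid $T\alpha^{-1}$ trivial, with disjointness of $\alpha$ as the essential input. But the mechanism you propose for doing this does not close the argument, and the key idea of the paper's proof is missing. You plan to choose levels crossing-by-crossing so that each ``partner pair'' gets its correct over/under at one designated meeting while all other meetings of any two strings are organized into cancelling pairs of \emph{successive} $n$-crossings, removable by a Reidemeister-II-type move. This is not achievable in general: the factorization $\phi(\alpha)=\pi_{j_1}\cdots\pi_{j_k}$ is arbitrary, a given pair of strings may meet at $n$-crossings that are far apart in the sequence with many other crossings interleaved, and a partner pair meets an odd number of times with no distinguished ``correct'' meeting. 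You acknowledge that verifying the organization into cancelling pairs is ``the heart of the argument,'' but that is precisely the part you have not supplied, so as written the proof has a genuine gap rather than a delicate-but-complete step.

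The idea that resolves all of this at once, and which your proposal never reaches, is to impose a \emph{single global total order} (a height) on all $m$ strands, chosen so that the $s$ strands of each $s$-crossing of $\alpha$ occupy consecutive heights ordered according to their levels in that crossing --- this is exactly where disjointness is used, since it guarantees these local orderings never conflict. Then at \emph{every} $n$-crossing one assigns levels according to this one fixed order. The resulting braid is layered: whenever two strands meet, anywhere in the braid, the same one is on top. Such a braid is isotopic (by pulling the strands taut at their fixed heights) to the unique minimal layered braid realizing its permutation, so all spurious meetings vanish without any pairing or induction, and what remains is one crossing for each pair reversed by $\phi(\alpha)$ with the over/under dictated by the heights --- which, by the choice of heights, is exactly $\alpha$. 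Your instinct that the constraints ``decouple'' because of disjointness is correct, but the decoupling is exploited by making one global consistent choice up front, not by local cancellations; without that global order, the consistency problem you flag in your last paragraph remains unresolved.
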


\begin{proof}
Consider the sequence of $n$-crossing permutations that produces $\phi(\alpha)$. We want to show that we can choose the levels of the corresponding $n$-crossings appropriately so that the result is equivalent to $\alpha$.

We do this by placing the $m$ strands on different heights. Choose an $s$-crossing in $\alpha$. We place the $s$ strands of this crossing in the $s$ highest levels, according to their levels in the $s$-crossing. We continue this process by choosing a new $s$-crossing in $\alpha$, and placing the $s$ strands of this crossing in the next $s$ highest levels. Once we have exhausted $s$-crossings in $\alpha$, the remaining strands can be placed in any order. The heights are well defined since the crossings are disjoint.

For each permutation corresponding to an $n$-crossing, we want to assign levels to the strands to make it into an $n$-crossing. We can simply choose the levels of the strands in the order of the heights assigned above. This will mean that each strand always stays on the same level, and that each $n$-crossing can be untangled easily (Fig.~\ref{fig:diagonalBox}). We call this a \emph{level position} of the braid.

\begin{figure}[ht]
	\centering
	\includegraphics[scale=0.2]{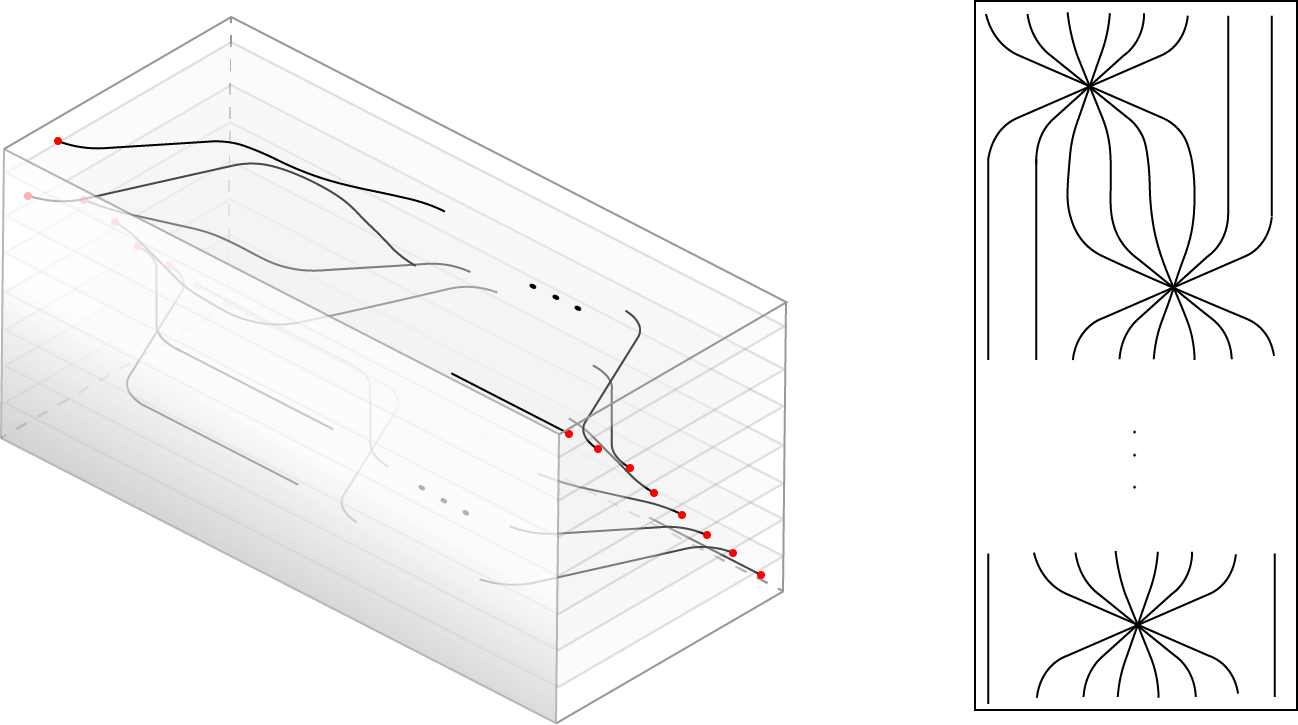}
	\caption{The left picture shows how we might use 6-crossings to obtain a double crossing which switches the first two strings, when the second string is the overstrand. The remaining strands have been ordered from left to right. The right picture is a view from above.}
	\label{fig:diagonalBox}
\end{figure}

Then, once we have achieved $\phi(\alpha)$ we can pull the strings taut, so that the only crossings that are left are the $s$-crossing in $\alpha$ that we were looking for (Fig.~\ref{fig:diagonalBoxAfterPull}).
\begin{figure}[ht]
	\centering
	\includegraphics[scale=0.2]{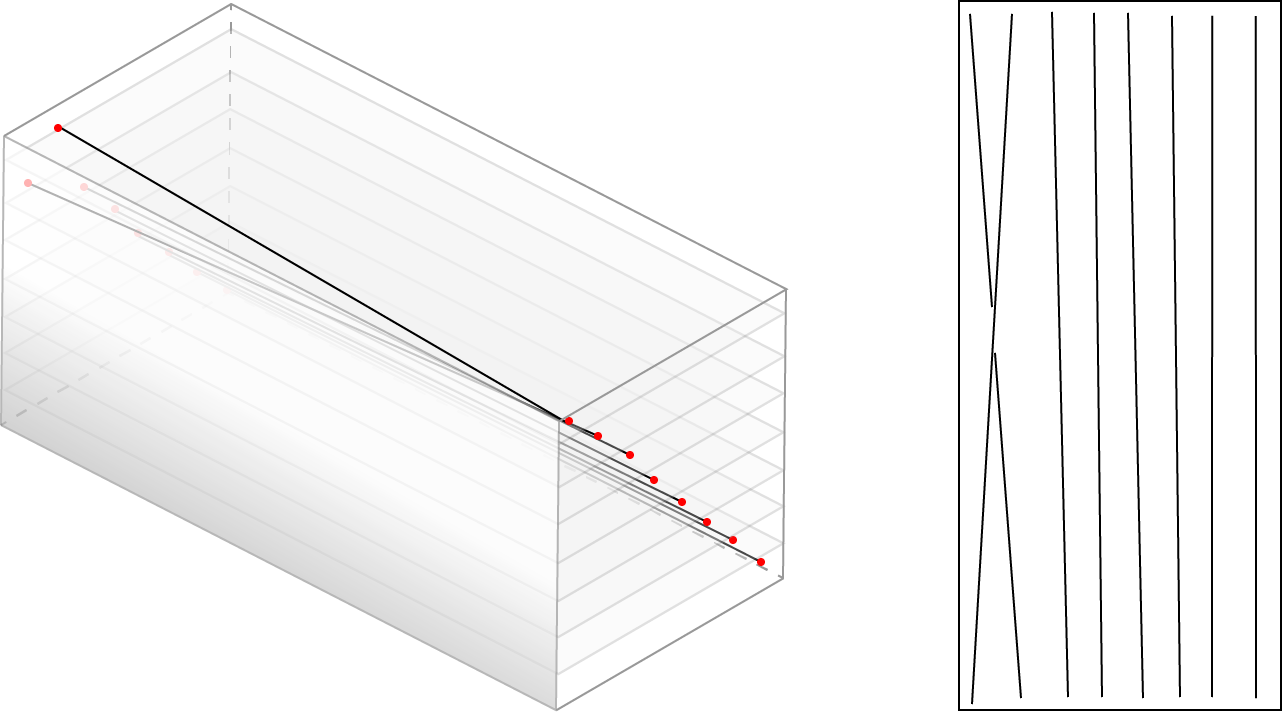}
	\caption{What happens after the strings are pulled taut. All the 6-crossings disappear and one double crossing remains.}
	\label{fig:diagonalBoxAfterPull}
\end{figure}
\end{proof}

By this lemma, it suffices to show that we can use $n$-crossing permutations to obtain double crossing permutations.

\subsection{Conjugation of permutations}

Observe that for permutations $\pi, \sigma \in S_m$, where
\[
\pi = (i_1, i_2, \dots, i_r) \cdots (i_s, i_{s+1}, \dots, i_t)
\]
in cycle notation, the conjugate of $\pi$ by $\sigma$ has a nice form:
\[
\sigma \pi \sigma^{-1} = (\sigma(i_1), \sigma(i_2), \dots, \sigma(i_r))
\cdots (\sigma(i_s), \sigma(i_{s+1}), \dots, \sigma(i_t)).
\]
We consider the case when $\sigma$ is a crossing permutation. Then it is its own inverse, so $\sigma \pi \sigma^{-1} = \sigma \pi \sigma$. Thus, if we multiply both sides of $\pi$ by $\sigma$, we are essentially switching elements that appear in $\pi$ according to $\sigma$. Note that taking the conjugate does not change the cycle type, which is to say it keeps the number of cycles and the length of each cycle constant. Also note that taking conjugates by the crossing permutation $\pi_i$ can only affect elements between $i$ and $i+n-1$, and must leave all other elements fixed. Finally, observe that we can reverse this process since $\sigma$ is its own inverse; if we can obtain $\pi$ by taking conjugates of $\pi'$, then we can obtain $\pi'$ by taking conjugates of $\pi$ in reverse order.

\subsection{Obtaining permutations of the same cycle type}

Using this idea, we can start with $\pi_1$, and take conjugates by some $\pi_j$ to obtain different permutations with the same cycle type. In fact, we can show that for sufficiently large $m$, we can obtain any permutation with the same cycle type. Note that we can always make sure that we have enough strands (i.e. that $m$ is large enough) by taking stabilizations.

First we present several lemmas which will be helpful in proving this result. In the proofs of these lemmas we repeatedly take conjugates by crossing permutations. Recall that taking conjugates by the crossing permutation $\pi_i$ can only affect elements between $i$ and $i+n-1$, and must leave all other elements fixed. We can keep track of which entries of a permutation are affected by conjugation, by checking which entries lie within or outside the range from $i$ to $i+n-1$.

The first lemma shows how to obtain a permutation which sends 1 to some $N$.

\begin{lemma}
\label{lem:moveMaxPerm}
Let $n$ be even. Then, for any $N$ with $1 \le N \le \frac{3n}{2}$, there exists a sequence of $n$-crossing permutations over $S_{3n/2}$ whose product sends 1 to $N$.
\end{lemma}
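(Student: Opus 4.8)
The plan is to track only the image of the single element $1$ under a product of crossing permutations, and to reformulate each crossing permutation geometrically. Recall that $\pi_j$ reverses the block $\{j, j+1, \dots, j+n-1\}$ and fixes everything else; concretely, on its block it acts as the reflection $x \mapsto 2c_j - x$ about the half-integer center $c_j = j + \frac{n-1}{2}$, and as the identity outside. Thinking of the image of $1$ as a token moving on the integers $\{1, \dots, 3n/2\}$, applying $\pi_j$ reflects the token about $c_j$ whenever the token lies in $[j, j+n-1]$ and leaves it fixed otherwise. Since $c_j$ is a half-integer, $2c_j$ is odd, so every reflection that actually moves the token flips its parity; as the token starts at the odd value $1$, its parity after $k$ moves is determined by $k$.

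The engine of the construction is that two reflections about adjacent centers compose to a translation. First I would record that, for $1 \le j \le n/2$ and any token position $p \in [j, j+n-2]$, the token lies in both windows $[j, j+n-1]$ and $[j+1, j+n]$, so applying $\pi_j$ and then $\pi_{j+1}$ sends $p \mapsto 2c_{j+1} - (2c_j - p) = p + 2$. As $j$ ranges over $1, \dots, n/2$, the validity intervals $[j, j+n-2]$ overlap and their union is exactly $[1, \tfrac{3n}{2} - 2]$. Consequently, starting from the token at $1$, I can apply these length-$2$ translations repeatedly to walk the token up through the odd values $1, 3, 5, \dots$ and reach every odd $N$ with $1 \le N \le 3n/2$, since every intermediate odd position is at most $\tfrac{3n}{2} - 2$ and therefore admits a valid $+2$ step.

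It remains to reach the even targets, and here I would use a single extra reflection. Because the windows $[1, n], [2, n+1], \dots, [\tfrac{n}{2}+1, \tfrac{3n}{2}]$ cover all of $\{1, \dots, \tfrac{3n}{2}\}$, any even $N$ lies in some block $[j, j+n-1]$. Set $q = \pi_j(N) = 2c_j - N$; this is odd (parity flip), lies in the same block, and hence in $\{1, \dots, \tfrac{3n}{2}\}$, so by the previous paragraph the token can be driven from $1$ to $q$. Applying $\pi_j$ once more then sends the token from $q$ to $N$, completing the construction for even $N$.

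The main thing to check carefully — and the only place the hypotheses genuinely enter — is the bookkeeping of which windows are available. Only the permutations $\pi_1, \dots, \pi_{n/2+1}$ exist in $S_{3n/2}$, so I must confirm both covering facts: that the translation intervals $[j, j+n-2]$ for $1 \le j \le n/2$ exhaust $[1, \tfrac{3n}{2} - 2]$ (this is where $n$ even keeps $n/2$ an integer and lets the blocks overlap), and that every $N \le \tfrac{3n}{2}$ lies in one of the $n/2+1$ available blocks. These two range computations are routine but are exactly the content of the lemma; everything else is the geometric observation that adjacent reflections translate by $2$ while single reflections toggle parity.
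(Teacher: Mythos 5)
Your argument is correct, and it rests on the same underlying mechanism as the paper's proof --- each $\pi_j$ is an involutive block reversal, and composing two of them translates the image of $1$ --- but the decomposition is genuinely different and, I think, cleaner. The paper builds a small toolkit of increments applied from specific starting points: a single $\pi_r$ increments by $n-1$, a pair $\pi_{r+s}\pi_r$ increments by $2s$ (your adjacent-reflection translation is the special case $s=1$), and a triple product $\pi_{r+1}\pi_{r+n/2}\pi_r$ increments by $1$ to fix parity; it then reaches every $N\le n$ by combining these and handles $n<N\le\frac{3n}{2}$ with one extra crossing permutation appended at the end. You instead iterate the single adjacent-pair translation $\pi_{j+1}\pi_j\colon p\mapsto p+2$ to sweep through all odd targets up to $\frac{3n}{2}$ in one uniform pass, and then resolve the parity obstruction for even targets with a single terminal reflection into the block containing $N$, rather than with the paper's three-fold product. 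The two covering checks you isolate (the translation windows $[j,j+n-2]$ for $1\le j\le n/2$ exhaust $[1,\frac{3n}{2}-2]$, and the blocks of $\pi_1,\dots,\pi_{n/2+1}$ exhaust $[1,\frac{3n}{2}]$) are exactly where the bound $\frac{3n}{2}$ enters, and they are verified correctly. What your version buys is a uniform treatment with no case split on $N\le n$ versus $N>n$ and only two kinds of moves to verify; what the paper's version buys is the larger stride $2s$ and the explicit increment-by-$(n-1)$ step, which it reuses verbatim in the proofs of Lemmas~\ref{lem:moveMax} and \ref{lem:moveAll} when entries must be pushed beyond $\frac{3n}{2}$ in jumps of $n-1$. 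Either proof is acceptable here.
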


\begin{proof}
First observe that over $S_{3n/2}$, we have the $n$-crossing permutations $\pi_1, \dots, \pi_{n/2+1}$.

We consider permutations which send $r$ to some $r+j$. We call this incrementing $r$ by $j$.

For $1 \le r \le \frac{n}{2}+1$, we can increment it by $n-1$ with $\pi_r$. For $r=1,2$ and $1 \le s \le \frac{n}{2}-1$, we can increment $r$ by $2s$ with $\pi_{r+s} \pi_r$. (First $\pi_r$ sends $r$ to $r+n-1$. Then observe that $r+n-1$ is $s$ away from $r+s+n-1$, which is the highest entry in $\pi_{r+s}$, and $r+2s$ is $s$ away from $r+s$, which is the lowest entry in $\pi_{r+s}$.) Also, if $r=1$, we can increment it by 1 with $\pi_{r+1} \pi_{r+n/2} \pi_r$.

Hence we can obtain a permutation that sends 1 to 2, or to $n$, or to $2s+1$ for $1 \le s \le \frac{n}{2}-1$. We can also obtain a permutation that sends 1 to $2s+2$, for $1 \le s < \frac{n}{2}-1$, by first sending it to 2 and then incrementing by $2s$. Thus, for any $N \le n$, we can obtain a sequence of $n$-crossing permutations over $S_{3n/2}$ that sends 1 to $N$.

If $n < N \le \frac{3n}{2}$, then we can first take a permutation that sends 1 to some element $N-n+1$, where $1 < N-n+1 \le \frac{n}{2}+1$. Then we can compose it with $\pi_{N-n+1}$, which will send 1 to $N$.
\end{proof}

The next lemma describes how to increment the largest entry in a permutation.

\begin{lemma}
\label{lem:moveMax}
Let $n$ be even. Then, given a permutation of the form
\[
\tau = (1, 2, \dots, l_1)(l_1+1, \dots, l_2) \cdots (l_{t-1}+1, \dots, l_t-1, l_t),
\]
we can conjugate it by the $n$-crossing permutations over $S_{l_t+3n-3}$ to obtain any permutation of the form
\[
(1, 2, \dots, l_1) (l_1+1, \dots, l_2) \cdots (l_{t-1}+1, \dots, l_t-1, N),
\]
where $l_t \le N \le l_t+3n-3$.
\end{lemma}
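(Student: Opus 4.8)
The plan is to convert the statement into a purely combinatorial reachability problem about a single entry and then solve it with the increment operations already isolated in Lemma~\ref{lem:moveMaxPerm}. Recall from the discussion of conjugation that conjugating $\tau$ successively by crossing permutations $\rho_1, \dots, \rho_k$ relabels every entry $x$ appearing in the cycle notation by $x \mapsto \sigma(x)$, where $\sigma = \rho_k \cdots \rho_1$. The entries occurring in $\tau$ are exactly $1, 2, \dots, l_t$, so to land on the target permutation it suffices to produce a $\sigma$, a product of $n$-crossing permutations over $S_{l_t + 3n - 3}$, with $\sigma(x) = x$ for every $x \le l_t - 1$ and $\sigma(l_t) = N$; the entries $l_t + 1, \dots, l_t + 3n - 3$ are fixed points of $\tau$ and may be permuted arbitrarily. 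Since conjugation preserves cycle type, any such $\sigma$ automatically yields a permutation of the advertised block-cycle form in which only the top entry has changed from $l_t$ to $N$.

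The key device for enforcing the constraint ``fix everything below $l_t$'' is to use \emph{only} crossing permutations $\pi_j$ with $j \ge l_t$. Each such permutation acts as the identity on $1, \dots, l_t - 1$, so their product fixes all of these entries regardless of order, and the lower cycles are never disturbed. The only member of this family that touches $l_t$ is $\pi_{l_t}$, which sends the marked entry $l_t$ up to $l_t + n - 1$; hence the first move is forced to be $\pi_{l_t}$. After shifting coordinates by $l_t - 1$ so that $l_t \mapsto 1$, what remains is precisely the reachability problem of Lemma~\ref{lem:moveMaxPerm}, but now in the larger region $S_{3n-2}$ with the full family $\pi_1, \dots, \pi_{2n-1}$ available: move a single marked entry from $1$ to an arbitrary target $M = N - l_t + 1$ with $1 \le M \le 3n - 2$, ignoring all other (empty) positions.

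The main work, and the main obstacle, is extending the reach of Lemma~\ref{lem:moveMaxPerm} from $3n/2$ up to $3n - 2$, which is what allows $N$ to be as large as $l_t + 3n - 3$. Lemma~\ref{lem:moveMaxPerm} already supplies every target up to $3n/2$, in both parities. To go higher I would exploit that every position except the marked one is empty, so I may translate freely: a pair $\pi_{j+d}\,\pi_j$ moves the marked entry by $2d$ whenever it lies in both ranges, and scrambling the empty positions costs nothing. Taking $j$ equal to the current position allows a translation by any even amount up to $2(n-1)$, so a single such step applied to a parity-matched position in $[1, 3n/2]$ already covers the whole interval up to $3n - 2$; in particular the extreme target $3n-2$ is reached by the three-stage jump $\pi_{2n-1}\,\pi_n\,\pi_1$ (equivalently $\pi_{l_t + 2n - 2}\,\pi_{l_t + n - 1}\,\pi_{l_t}$ before shifting), which is exactly why $3n - 3$ extra strands are both available and consumed. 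The residual effort is the bookkeeping to verify that each intermediate value of either parity is attained and that the ranges stay within $S_{l_t+3n-3}$; I expect this to be routine once the increment operations of Lemma~\ref{lem:moveMaxPerm} and the translation step above are in hand.
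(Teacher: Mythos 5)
Your proposal is correct and follows essentially the same route as the paper: restrict to the crossing permutations $\pi_j$ with $j \ge l_t$ so that $1,\dots,l_t-1$ are never touched, apply Lemma~\ref{lem:moveMaxPerm} shifted to start at $l_t$ to reach any $N \le l_t + \frac{3n}{2}-1$, and then push further with a few extra conjugations. The only (immaterial) difference is in that last extension step: the paper repeatedly increments by $n-1$ via single conjugations $\pi_{N'}, \pi_{N'+n-1}, \dots$, whereas you translate by an even amount $2d$ using a pair $\pi_{p+d}\pi_p$; both land on every $N$ up to $l_t+3n-3$.
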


\begin{proof}
First observe that over $S_{l_t + 3n-3}$, we have the $n$-crossing permutations $\pi_1, \dots, \pi_{l_t + 2n-2}$.

By applying Lemma~\ref{lem:moveMaxPerm} to the strings from $l_t$ to $l_t + \frac{3n}{2}-1$, we can see that for any $N$ with $l_t \le N \le l_t + \frac{3n}{2}-1$, there is a sequence of $n$-crossing permutations over $S_{l_t+3n/2-1}$ which sends $l_t$ to $N$, and leaves $1,\dots,l_t-1$ fixed. Then we can conjugate $\tau$ by this sequence of $n$-crossing permutations to obtain the permutation $(1, \dots, l_1)\cdots (l_{t-1}+1, \dots, l_t-1, N)$.

If $l_t+\frac{3n}{2} \le N \le l_t+3n-3$, then we first obtain a permutation $(1, \dots, l_1)\cdots (l_{t-1}+1, \dots, l_t-1, N')$ where $N'$ is an element with $l_t \le N' \le l_t + \frac{3n}{2}-1$ that is a multiple of $n-1$ away from $N$. Then we can conjugate by $\pi_{N'}$, $\pi_{N'+n-1}$, and so on, until we reach $N$ after conjugating by $\pi_{N-n+1}$. Note that in doing so, we only need the crossing permutations $\pi_1, \dots, \pi_{l_t + 2n-2}$. Thus, the crossing permutations over $S_{l_t+3n-3}$ are sufficient to produce any permutation of the form $(1, \dots, l_1)\cdots (l_{t-1}+1, \dots, l_t-1, N)$, where $N \le l_t+3n-3$.
\end{proof}

The next lemma describes how to increment the remaining entries of a permutation, without changing the order of the entries.

\begin{lemma}
\label{lem:moveAll}
Let $n$ be even. Then, given $\tau$ as in the statement of Lemma~\ref{lem:moveMax}, we can conjugate it by the $n$-crossing permutations over $S_{l_t+3n-3}$ to obtain any permutation of the form
\[
(a_1, \dots, a_{l_1}) (a_{l_1+1}, \dots, a_{l_2}) \cdots (a_{l_{t-1}+1}, \dots, a_{l_t})
\]
where $1 \le a_1 < a_2 < \cdots < a_{l_t} \le l_t+3n-3$.
\end{lemma}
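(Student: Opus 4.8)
The plan is to prove Lemma~\ref{lem:moveAll} by induction, building the target permutation one cycle at a time from the bottom up, using Lemma~\ref{lem:moveMax} as the engine for adjusting the largest available entry. The key observation is that Lemma~\ref{lem:moveMax} already tells us how to move the maximum entry $l_t$ of the last cycle to any $N$ with $l_t \le N \le l_t + 3n-3$, while leaving all smaller entries in place and preserving cyclic order. What remains is to extend this to move \emph{every} entry to a prescribed increasing sequence $a_1 < \cdots < a_{l_t}$. Let me think about the right order of operations.

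**First I would** handle the last entry $a_{l_t}$ using Lemma~\ref{lem:moveMax} directly: conjugate $\tau$ so that its final cycle becomes $(l_{t-1}+1, \dots, l_t - 1, a_{l_t})$, where $a_{l_t}$ is now parked at its target value. The crucial point is that once an entry sits in its final slot $a_k$, subsequent conjugations must not disturb it. Because conjugation by $\pi_i$ only affects entries in the range $[i, i+n-1]$, I can arrange to operate only \emph{below} the entries already placed. So the induction runs downward: having fixed $a_{l_t}, a_{l_t-1}, \dots, a_{j+1}$ at their targets, I treat the next entry (currently the largest unplaced entry, sitting at position $j$ in the original consecutive labeling) as a new ``maximum'' relative to the entries below it, and apply the moving procedure of Lemmas~\ref{lem:moveMaxPerm} and~\ref{lem:moveMax} to slide it up to $a_j$. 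Since $a_j < a_{j+1}$, there is room to place it strictly below the already-fixed entries, and every conjugation I use stays within the range spanned by the unplaced entries, hence leaves the fixed entries untouched.

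**The main obstacle** I expect is bookkeeping the range constraint: I must verify that at each downward step, the conjugating crossing permutations $\pi_i$ needed to move the current entry from its present location up to $a_j$ never reach above $a_{j+1}$, so that the already-placed entries stay fixed, and simultaneously that the strands $S_{l_t + 3n-3}$ suffice throughout. The spacing inequality $a_{j+1} - a_j \ge 1$ together with the bound $a_{l_t} \le l_t + 3n-3$ should guarantee both: since each entry needs to move a net distance at most $3n-3$ (as in Lemma~\ref{lem:moveMax}, because the gaps in the target never force a single entry past a $3n-3$ displacement relative to its starting consecutive position), the same stock of crossing permutations $\pi_1, \dots, \pi_{l_t+2n-2}$ remains available. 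I would also need to check the base case, where moving the smallest entry $a_1$ up to its target completes the construction, and confirm that cyclic order within each cycle is preserved throughout—this follows because conjugation by a crossing permutation relabels entries without reordering them within a cycle, exactly as noted in the conjugation discussion preceding these lemmas.

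**The reason this works** is that Lemma~\ref{lem:moveMax} is genuinely a statement about moving a single distinguished entry while freezing the rest, and the increasing condition $a_1 < \cdots < a_{l_t}$ is precisely what lets us iterate it without collisions: placing entries from largest to smallest means each newly placed entry occupies a value strictly below all previously fixed ones, so no reordering or overwriting can occur. Once all entries are placed, the resulting permutation is exactly the target, and by Lemma~\ref{lem:levelPosition} the sequence of conjugating crossing permutations lifts to an honest sequence of $n$-crossings.
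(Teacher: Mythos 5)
Your overall strategy---inducting from the largest entry down to the smallest and using Lemmas~\ref{lem:moveMaxPerm} and~\ref{lem:moveMax} to slide each entry to its target---is the same as the paper's. But there is a genuine gap at exactly the point you flag as ``the main obstacle'': your claim that the spacing $a_{j+1}-a_j\ge 1$ guarantees the conjugating permutations ``never reach above $a_{j+1}$'' is false. Each crossing permutation $\pi_i$ acts on a window of $n$ consecutive positions and moves \emph{every} entry in that window, and the mechanism of Lemma~\ref{lem:moveMaxPerm} deliberately overshoots: to increment $r$ by $2s$ it first conjugates by $\pi_r$, sending the entry to $r+n-1$, which lies \emph{above} the final target $r+2s$, and the windows of $\pi_r$ and $\pi_{r+s}$ extend up to $r+s+n-1$. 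So when the next already-placed entry $a_{j+1}$ sits close above the target $a_j$ (it can be as close as $a_j+1$), the conjugations needed to place the current entry necessarily sweep through positions occupied by entries you have already fixed, and those entries get moved. The increasing condition on the $a_i$ prevents collisions of \emph{values}, but it does not confine the \emph{supports} of the conjugating permutations.

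The paper's proof contains a substantive extra idea to handle exactly this. When the already-placed entry $N=a_{l_t-k+1}$ lies inside the working window $[l_t-k,\,l_t-k+\frac{3n}{2}-1]$, it first locates an ``empty'' index $M$ (one of the $k$ values in $[l_t-k+\frac{3n}{2}-1,\,l_t+\frac{3n}{2}-2]$ not appearing in the current permutation), and conjugates by a sequence from Lemma~\ref{lem:moveMaxPerm} \emph{in reverse} so that all placed entries are temporarily pushed strictly above the window; it then performs the move of $l_t-k$ to $N'$ inside the now-cleared window, and finally conjugates by the same sequence in forward order to restore the placed entries to their targets. It also splits off the cases $N'\le l_t-k+\frac{3n}{2}-1<N$ and $N'>l_t-k+\frac{3n}{2}-1$ (the latter using the chain of conjugations by $\pi_{M'},\pi_{M'+n-1},\dots$). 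Without this evacuate-and-restore step, or some replacement for it, your induction does not go through.
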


\begin{proof}
We prove this by iterating over each entry in $\tau$, from largest to smallest. The base case (largest entry in $\tau$) can be done by Lemma~\ref{lem:moveMax}.

Suppose we have moved the $k$ largest entries to their appropriate positions, where $1 \le k \le l_t$. Let $N = a_{l_t-k+1}$, which is to say that we have a permutation of the form
\[
\tau' = (1, \dots, l_1) \cdots (l_{s-1}+1, \dots, l_t-k, N, a_{l_t-k+2}, \dots, a_{l_s}) \cdots (a_{l_{t-1}+1}, \dots, a_{l_t}),
\]
Let $N' = a_{l_t - k}$, the new desired entry for the $k+1$st largest entry. Note that $N > N'$ by the hypothesis. We want to move $l_t-k$ to $N'$.

First consider the case when $N \le l_t - k + \frac{3n}{2} - 1$. Note that in this case $N' < l_t - k + \frac{3n}{2} - 1$. Observe that since we have only moved the $k$ largest entries, one of the $k$ indices between $l_t - k + \frac{3n}{2} - 1$ and $l_t + \frac{3n}{2} - 2$ must be ``empty'', which is to say it does not appear in $\tau'$, or that it is a fixed point in $\tau'$. Let $M$ be such an element.

We apply Lemma~\ref{lem:moveMaxPerm} to the $\frac{3n}{2}$ strings starting at $l_t - k + \frac{3n}{2} - 1$. Note that we need $l_t + 3n - 3$ strings for this to be possible for all $k$ with $1 \le k \le l_t$. Then, there exists a sequence $\{\pi_{b_i}\}$ of $n$-crossing permutations over $S_{l_t - k + 3n - 2}$ that sends $l_t - k + \frac{3n}{2} - 1$ to $M$, and leaves $1, \dots, l_t - k + \frac{3n}{2} - 2$ fixed. Note that this sequence in reverse order sends $M$ to $l_t-k+\frac{3n}{2} - 1$. Hence if we conjugate $\tau'$ by $\{\pi_{b_i}\}$ in reverse order, then the element $l_t-k+\frac{3n}{2} - 1$ will not appear in the resulting permutation. Thus we have some permutation of the form
\[
(1, \dots, l_1) \cdots (l_{s-1}+1, \dots, l_t-k, c_{l_t-k+1}, c_{l_t-k+2} \dots, c_{l_s}) \cdots (c_{l_{t-1}+1}, \dots, c_{l_t}),
\]
where all $c_i$ are strictly greater than $l_t - k + \frac{3n}{2} - 1$. By Lemma~\ref{lem:moveMaxPerm}, there is a sequence of $n$-crossing permutations $\{\pi_{b_i}\}$ over $S_{l_t-k+3n/2-1}$ which sends $l_t$ to $N'$, and leaves $1,\dots,l_t-k-1$ fixed. (Note that $\{\pi_{b_i}\}$ also leave entries greater than $l_t-k+\frac{3n}{2}-1$ fixed.) Then we can take conjugates by $\{\pi_{b_i}\}$ to obtain the permutation
\[
(1, \dots, l_1) \cdots (l_{s-1}+1, \dots, l_t-k-1, N', c_{l_t-k+1}, c_{l_t-k+2} \dots, c_{l_s}) \cdots (c_{l_{t-1}+1}, \dots, c_{l_t}).
\]
Finally, we can take conjugates by $\{\pi_{b_i}\}$ in the forward order to move the other entries back and obtain
\[
(1, \dots, l_1) \cdots (l_{s-1}+1, \dots, l_t-k-1, N', N, a_{l_t-k+2} \dots, a_{l_s}) \cdots (a_{l_{t-1}+1}, \dots, a_{l_t}).
\]
Note that since $N' < l_t - k + \frac{3n}{2} - 1$, the conjugations by $\{\pi_{b_i}\}$ does not affect $N'$.

Next, consider the case when $N' \le l_t - k + \frac{3n}{2} - 1 < N$. In this case, by Lemma~\ref{lem:moveMaxPerm} we can move $l_t-k$ to $N'$ without affecting any other elements.

Finally, consider the case when $N' > l_t - k + \frac{3n}{2} - 1$. First we use Lemma~\ref{lem:moveMaxPerm} to move $l_t-k$ to some $M'$, where $M'$ is an element with $l_t - k \le N' \le l_t + \frac{3n}{2}-1$ that is a multiple of $n-1$ away from $N$. Then we can conjugate by $\pi_{M'}$, $\pi_{M'+n-1}$, and so on, until we reach $N'$.
\end{proof}

Finally we prove the desired result. We define the \emph{size} of a permutation to be the number of distinct entries that appear in the permutation when written in cycle notation, where we drop any 1-cycles.

\begin{lemma}
\label{lem:sameCycleType}
Let $n$ be even. Then, given any $\tau \in S_{N+3n-3}$ with size at most $N$, we can conjugate by the $n$-crossing permutations over $S_{N+3n-3}$ to obtain any permutation in $S_{N+3n-3}$ of the same cycle type as $\tau$.
\end{lemma}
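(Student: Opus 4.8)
The plan is to regard ``reachability by conjugation through crossing permutations'' as a relation on $S_{N+3n-3}$ and to exploit the fact that it is an equivalence relation. By the observation in the subsection on conjugation, conjugating by a crossing permutation is its own inverse and the whole process can be run backwards, so the relation is symmetric; it is transitive because one conjugating sequence can be concatenated with another. Consequently, to connect $\tau$ to an arbitrary $\rho$ of the same cycle type it suffices to connect a single canonical representative $\tau_0 = (1, 2, \dots, \lambda_1)(\lambda_1+1, \dots, \lambda_1+\lambda_2)\cdots$ to every permutation of cycle type $\lambda$, where $\lambda = (\lambda_1, \lambda_2, \dots)$ records the lengths of the nontrivial cycles of $\tau$. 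Since the size of $\tau$ is at most $N$, the support of $\tau_0$ lies in $\{1, \dots, N\}$, leaving the top $3n-3$ strands free to serve as workspace.

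Lemma~\ref{lem:moveAll} already carries $\tau_0$ to every \emph{sorted} permutation of cycle type $\lambda$, namely those whose moved points $a_1 < a_2 < \cdots < a_{l_t}$ appear in increasing order both within and across the cycles. So the real content is to remove the increasing restriction: I must show that every permutation of cycle type $\lambda$ is equivalent to a sorted one, which amounts to realizing (i) an arbitrary cyclic ordering of the entries inside each cycle and (ii) an arbitrary assignment of which blocks of values belong to which cycle. The inverse of a sorted cycle is already not sorted, so this gap is genuine and is exactly what distinguishes this lemma from Lemma~\ref{lem:moveAll}.

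My approach is to rebuild $\tau_0$ into $\rho$ one entry at a time, following the same inductive scheme as the proof of Lemma~\ref{lem:moveAll} but inserting each new entry into the position dictated by the cyclic structure of $\rho$ rather than into increasing position. Concretely, I would process the cycles of $\rho$ in turn and, within each cycle, place its elements successively; at each stage I use Lemma~\ref{lem:moveMaxPerm}, together with the ``empty slot'' device from the proof of Lemma~\ref{lem:moveAll} that locates an index not yet occupied among the spare strands, to route the next element to its target without disturbing the elements already placed. Because conjugation by a crossing permutation moves entries only within a window of width $n$ and fixes everything outside it, and because the $3n-3$ free strands supply a holding area to carry elements past one another, each routing step can be arranged to fix the already-correct portion of the permutation; the block reassignment in case (ii) is the accompanying bookkeeping and should again reduce to repeated applications of Lemma~\ref{lem:moveMaxPerm}.

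The step I expect to be the main obstacle is precisely this routing to non-monotonic targets. In Lemma~\ref{lem:moveAll} the increasing order guarantees that every element can be pushed upward into fresh territory, whereas an arbitrary cyclic ordering may force an element to be moved past others, so I must verify that a reachable free index is always available, which is what forces the full $3n-3$ spare strands, and that the parity of the even crossing permutations never obstructs reaching the target. The latter should not be a true obstruction: each $\pi_i$ is a product of $n/2$ transpositions, but any unwanted parity can be absorbed into a permutation supported entirely on the spare strands, whose restriction to the support of $\rho$ is trivial. Confirming that the induction terminates with every element in its prescribed cyclic slot, while never exhausting the workspace, is the delicate part that the proof must pin down.
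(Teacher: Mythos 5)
You have the right skeleton: reachability under conjugation by crossing permutations is symmetric and transitive, so it suffices to connect the canonical representative $\tau'=(1,\dots,l_1)\cdots(l_{t-1}+1,\dots,l_t)$ to every permutation of its cycle type, and Lemma~\ref{lem:moveAll} only delivers the \emph{sorted} arrangements $a_1<\cdots<a_{l_t}$. You also correctly locate the remaining content in realizing an arbitrary ordering of the entries within the cycle word. But that is precisely the step your proposal does not actually carry out. Your plan --- repeat the inductive scheme of Lemma~\ref{lem:moveAll} while inserting each new entry ``into the position dictated by the cyclic structure of $\rho$'' --- breaks exactly where you suspect it does. The induction in Lemma~\ref{lem:moveAll} works only because the targets are monotone: at stage $k$ the unplaced values all sit at $1,\dots,l_t-k$, strictly below the workspace and below every already-placed value, so each conjugation window can be kept clear of the placed entries. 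For a non-monotone target, routing a value to a slot that is not above all previously placed values forces conjugations by $\pi_i$ whose width-$n$ windows contain already-placed entries, and you give no mechanism for protecting them; asserting that ``each routing step can be arranged to fix the already-correct portion'' is the whole difficulty, not a consequence of the $3n-3$ spare strands. (Your parity worry, by contrast, is vacuous: conjugation preserves cycle type exactly, so there is no parity obstruction to absorb.)

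The paper closes this gap by a different and more concrete device: rather than building the target ordering entry by entry, it first reaches the sorted arrangement via Lemma~\ref{lem:moveAll} and then shows how to \emph{swap two adjacent entries} of the cycle word of $\tau'$. Explicitly, to exchange the values in positions $r$ and $r+1$ one conjugates by $\pi_r$, then $\pi_{r+n-2}$, then $\pi_{r+\frac{3n}{2}-3}$ (which carries the two values far into the free strands, where they land as $r+2n-4$ and $r+2n-3$ and can be transposed without touching anything else), and then undoes the first two conjugations by $\pi_{r+n-2}$ and $\pi_r$. Since adjacent transpositions of positions generate all rearrangements of the entries, this together with Lemma~\ref{lem:moveAll} yields every permutation of the given cycle type. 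Your proof needs either this swap construction or an equally explicit replacement for it; as written, the central step is a statement of intent rather than an argument.
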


\begin{proof}
Write
\[
\tau = (d_1, \dots, d_{l_1})(d_{l_1+1}, \dots, d_{l_2}) \cdots (d_{l_{t-1}+1}, \dots, d_{l_t})
\]
in cycle notation. Note that $l_t \le N$ since the size of $\tau$ is at most $N$. Then consider the permutation
\[
\tau' = (1, \dots, l_1)(l_1+1, \dots, l_2) \cdots (l_{t-1}+1, \dots, l_t).
\]
Note $\tau'$ has the same cycle type as $\tau$. It suffices to show that we can conjugate $\tau'$ to get any permutation of the same cycle type, since we can reverse this process to obtain $\tau'$ from $\tau$.

Since $l_t \le N$, we know we can use the $n$-crossing permutations over $S_{l_t + 3n - 3}$. Thus we can apply Lemma~\ref{lem:moveAll} to obtain any permutation of the form 
\[
(a_1, \dots, a_{l_1})(a_{l_1+1}, \dots, a_{l_2}) \cdots (a_{l_{t-1}+1}, \dots, a_{l_t}),
\]
where $a_1 < \cdots < a_{l_t}$. Hence it suffices to show that we can switch two entries in the permutation. We show how to switch two entries in $\tau'$, after which all entries can be moved to the appropriate positions.

Suppose we want to switch $r$ and $r+1$ for $r \le l_t$; in other words, suppose we want to obtain the permutation
\[
(1, \dots, l_1) \cdots (l_{s-1}+1, \dots, r-1, r+1, r, r+2, \dots, l_s) \cdots (l_{t-1}+1, \dots, l_t).
\]
Note that $s$ and $s+1$ may appear in different cycles in $\tau$, but the same argument applies. First we can conjugate $\tau'$ by $\pi_r$ to obtain the permutation
\[
(1, \dots, l_1) \cdots (l_{s-1}+1, \dots, r-1, r+n-1, r+n-2, e_{r+2} \dots, e_{l_s}) \cdots (e_{l_{t-1}+1}, \dots, e_{l_t}),
\]
where the $e_i$ are strictly less than $r+n-2$ for $i \ge r+2$. Then we can conjugate by $\pi_{r+n-2}$ to obtain the permutation
\[
(1, \dots, l_1) \cdots (l_{s-1}+1, \dots, r-1, r+2n-4, r+2n-3, e_{r+2}, \dots, e_{l_s}) \cdots (e_{l_{t-1}+1}, \dots, e_{l_t}).
\]
We can then conjugate by $\pi_{r+\frac{3n}{2}-3}$ to obtain the permutation
\[
(1, \dots, l_1) \cdots (l_{s-1}+1, \dots, r-1, r+2n-3, r+2n-4, e_{r+2}, \dots, e_{l_s}) \cdots (e_{l_{t-1}+1}, \dots, e_{l_t}).
\]
Observe that we have switched the $r$th entry with the $r+1$st entry. Finally, we can conjugate by $\pi_{r+n-2}$ and then by $\pi_r$ to move all entries back and obtain the desired permutation.
\end{proof}

While this suffices to prove Theorem~\ref{thm:evenAlexander}, we wish to reduce the number $d+3n-3$ as much as possible. As we will see in Section~\ref{sec:braidIndices}, we conjecture that it can be reduced down to $n+2$ for $d \le n$.

\subsection{Proof of Theorem~\ref{thm:evenAlexander}}

Before proving Theorem~\ref{thm:evenAlexander}, we present one final lemma.

\begin{lemma}
\label{lem:evenPair}
Let $n$ be even, and let $m \ge \frac{5n}{2}-1$. Then we can obtain the permutations $(1,n)(2n-1,2n)$ as a product of the $n$-crossing permutations over $S_m$.
\end{lemma}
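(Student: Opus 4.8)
The plan is to exploit that $(1,n)(2n-1,2n)$ is an \emph{even} permutation. This is the whole point of packaging crossings in pairs: when $n\equiv 0\pmod 4$ every crossing permutation $\pi_j$ is a product of $n/2$ (an even number of) transpositions, hence even, so no product of $n$-crossing permutations can ever equal a single transposition $(i,i+1)$; but $(1,n)(2n-1,2n)$ is a product of \emph{two} transpositions and so survives this parity obstruction. Since the target is even, I would build it as a product of two $3$-cycles that share two of their points, using the elementary identity $(a,b,c)(a,b,d)=(a,c)(b,d)$. Taking $a=1,\ b=2n-1,\ c=n,\ d=2n$ gives exactly $(1,2n-1,n)\,(1,2n-1,2n)=(1,n)(2n-1,2n)$, so it suffices to realize the two $3$-cycles $(1,2n-1,n)$ and $(1,2n-1,2n)$ as products of $n$-crossing permutations over $S_m$.

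The engine for the first one is the cancellation identity $(\pi_1\pi_n)^2=(1,2n-1,n)$. Here $\pi_1$ reverses the block $\{1,\dots,n\}$ and $\pi_n$ reverses $\{n,\dots,2n-1\}$, and these two reversal windows overlap in the single point $n$. A direct computation shows that $\pi_1\pi_n$ is the $3$-cycle $(1,n,2n-1)$ times $n-2$ disjoint transpositions; all of the transpositions die upon squaring, leaving precisely the $3$-cycle $(1,2n-1,n)$. This uses only $\pi_1,\dots,\pi_n$, which are available as soon as $m\ge 2n-1$, and in particular under our hypothesis $m\ge \tfrac{5n}{2}-1$.

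For the second $3$-cycle I would obtain $(1,2n-1,2n)$ by conjugating this gadget. Since conjugating a product of crossing permutations by a crossing permutation is again a product of crossing permutations (each $\pi_j$ is its own inverse), it is enough to produce a word $\gamma$ in the $\pi_j$ with $\gamma(1)=1$, $\gamma(2n-1)=2n-1$, and $\gamma(n)=2n$; then $\gamma\,(1,2n-1,n)\,\gamma^{-1}=(1,2n-1,2n)$, and multiplying the two $3$-cycles gives the target permutation exactly.

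The construction of $\gamma$ is the main obstacle. The difficulty is structural: no single crossing permutation can both reach position $2n$ and fix $2n-1$, because each window has width only $n$, so any window containing $2n$ must also contain $2n-1$. Hence $\gamma$ is forced to displace $2n-1$ and then restore it, and this detour is exactly what consumes the slack above $2n$—equivalently, the availability of $\pi_{3n/2}$, which exists precisely when $m\ge \tfrac{5n}{2}-1$. A clean reduction is to set $\gamma=\pi_{n+1}\,\mu\,\pi_{n+1}$: one checks that $\gamma$ acts on $\{1,n,2n-1\}$ in the required way (sending $1\mapsto 1$, $n\mapsto 2n$, $2n-1\mapsto 2n-1$) as soon as $\mu$ fixes $1$ and $n+2$ and sends $n\mapsto n+1$, since $\pi_{n+1}$ then carries $n+1\mapsto 2n$ while returning $2n-1$ to itself. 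This collapses the whole problem to a single local ``$+1$'' move of the kind already produced by the incrementing techniques in Lemma~\ref{lem:moveMaxPerm}. The remaining work—exhibiting such a $\mu$ and verifying that every crossing permutation used has index at most $3n/2$, so that $S_{5n/2-1}$ genuinely suffices—is routine but delicate, and it is precisely here that the bound $m\ge\tfrac{5n}{2}-1$ is tight. I would emphasize that this direct approach deliberately avoids Lemma~\ref{lem:sameCycleType}, which for a size-$4$ permutation would only yield the weaker requirement $m\ge 3n+1$.
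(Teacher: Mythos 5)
Your reduction is clean as far as it goes: the identity $(1,2n-1,n)(1,2n-1,2n)=(1,n)(2n-1,2n)$ is correct, and the computation $(\pi_1\pi_n)^2=(1,2n-1,n)$ checks out, since $\pi_1\pi_n$ really is that $3$-cycle times $n-2$ disjoint transpositions supported on $\{2,\dots,n-1\}\cup\{n+1,\dots,2n-2\}$. The problem is that the second $3$-cycle is never actually produced. Everything hinges on exhibiting a word $\mu$ in the $\pi_j$ with $\mu(1)=1$, $\mu(n)=n+1$ and $\mu(n+2)=n+2$, and you dismiss this as ``routine'' by appeal to the incrementing moves of Lemma~\ref{lem:moveMaxPerm}. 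But that lemma only controls the image of the single element being incremented; it gives no control over whether other prescribed elements are fixed. Indeed the natural increment-by-one word shifted to start at string $n$, namely $\pi_{n+1}\pi_{3n/2}\pi_{n}$, does send $n\mapsto 2n-1\mapsto 2n\mapsto n+1$ and fixes $1$, but it sends $n+2\mapsto 2n-3\mapsto 2n+2\mapsto 2n+2$ (for $n\ge 6$), so it violates the third requirement. Since the entire difficulty of the lemma --- and, as you yourself observe, the entire reason the bound $m\ge \tfrac{5n}{2}-1$ appears --- is concentrated in this one step, leaving it unproved is a genuine gap, not a routine verification.

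For comparison, the paper avoids the problem entirely by never asking an element to stay put while its neighbor moves: it conjugates $\pi_1=(1,n)(2,n-1)\cdots(\tfrac{n}{2},\tfrac{n}{2}+1)$ by $\pi_n$, $\pi_{3n/2}$, $\pi_1$, $\pi_n$ in turn, tracking only the outer cycle, which becomes $(1,2n-1)$, $(1,2n)$, $(n,2n)$, $(2n-1,2n)$; the inner transpositions are merely permuted among themselves at each stage because they are symmetric under the relevant reversals. A final multiplication by $\pi_1$ cancels the inner transpositions and leaves $(1,n)(2n-1,2n)$. To salvage your route you would need either to construct $\mu$ explicitly and verify it uses only $\pi_1,\dots,\pi_{3n/2}$, or to replace the conjugation step with a mechanism, like the paper's, in which the elements you care about are carried along rather than required to remain fixed.
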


\begin{proof}
First note that over $S_{5n/2-1}$ we have the crossing permutations $\pi_1, \dots, \pi_{3n/2}$.

Take the crossing permutation $\pi_1 = (1,n)(2,n-1)\cdots(\frac{n}{2}, \frac{n}{2}+1)$. We can conjugate by $\pi_n$ to obtain the permutation $(1,2n-1)(2,n-1)\cdots(\frac{n}{2}, \frac{n}{2}+1)$. Then conjugate by $\pi_{3n/2}$ to get $(1,2n)(2,n-1)\cdots(\frac{n}{2}, \frac{n}{2}+1)$. Thus we have moved the largest entry of the permutation to $2n$.

We then perform a similar sequence to move the smallest entry to $2n-1$. This is done by conjugating by $\pi_1$ and then by $\pi_n$. We are now left with the permutation $(2n-1,2n)(2,n-1)\cdots(\frac{n}{2},\frac{n}{2}+1)$. Note that the first conjugation by $\pi_1$ also moves the other entries around, but the pairs stay the same; for example $\pi_1$ switches the entries 2 and $n-1$, but this keeps the cycle $(2,n-1)$ constant.

Finally we can multiply this permutation by $\pi_1$. The transpositions in the middle would cancel, and we are left with $(1,n)(2n-1,2n)$.
\end{proof}

We are now ready to prove Theorem~\ref{thm:evenAlexander}.

\begin{proof}[Proof of Theorem~\ref{thm:evenAlexander}]
Let $L$ be a link. Put it in a double crossing braid form; call this braid $\alpha$.

First consider the case when $n = 4k + 2$. Start with $\alpha$, and take stabilizations until we have at least $3n+1$ strands; call this braid $\alpha'$.

By Lemma~\ref{lem:evenPair}, we can obtain the permutation $(1,n)(2n-1,2n)$. This permutation has size 4, so by Lemma~\ref{lem:sameCycleType}, we can conjugate $\pi_1$ to obtain any permutation whose cycle type is two transpositions. 
We start with $\pi_1$, which has an odd number of transpositions. We can cancel pairs of transpositions if we multiply by the pairs, which we can obtain by Lemma~\ref{lem:sameCycleType}. Thus we can cancel all but one of the transpositions. By Lemma~\ref{lem:sameCycleType}, we can rearrange the entries of this transposition to obtain any transposition of the form $(i,i+1)$ for any $i$ with $1 \le i < 3n+1$. Hence, by Lemma~\ref{lem:levelPosition}, we can produce any double crossing that appears in $\alpha'$ as a product of $n$-crossings. Thus, we have produced an $n$-crossing braid that is equivalent to $L$.

Now consider the case when $n=4k$. Observe that the permutations obtained from the $n$-crossings are all even. Therefore these permutations cannot generate the transpositions, which are odd. However, we can start with a double crossing braid with an even number of crossings, which will be generated by all disjoint pairs of double crossings. Note that if two consecutive crossings involve a common strand, we can obtain one of them with a pair of crossings by creating an extra dummy crossing elsewhere, and obtain the other with a pair which undoes the dummy crossing.

Start with $\alpha$, and take stabilizations until we have at least $3n+1$ strands. If this braid has an odd number of crossings, take an extra stabilization so we have an even number of crossings. From the top of the braid, put the crossings in disjoint pairs, adding a pair of dummy crossings as necessary. Call this braid $\alpha'$.

As before, by Lemma~\ref{lem:evenPair} and Lemma~\ref{lem:sameCycleType} we can obtain any permutation whose cycle type is two transpositions. We start with $\pi_1$, which has an even number of transpositions. We can cancel pairs of transpositions as before, so we are left with a pair of transpositions. Then, by Lemma~\ref{lem:sameCycleType} we can rearrange the entries to get any pair of transpositions of the form $(a, a+1)(b,b+1)$, where the four entries are all distinct. Hence, by Lemma~\ref{lem:levelPosition}, we can produce any disjoint pair of double crossings that appears in $\alpha'$ as a product of $n$-crossings. Thus, we have produced an $n$-crossing braid that is equivalent to $L$.
\end{proof}

\section{Triple-crossing Braids}
\label{sec:tripleAlexander}

Once we know that every link can be put into an $n$-crossing braid for even $n$, a natural question to ask is: is this also true when $n$ is odd?

First, consider an open braid with $m$ strings. If we label the strings $1,\dots,m$ from left to right, we can see that any odd multi-crossing only mixes strings with the same parity. The closure of the braid must therefore have at least two components, which means that we cannot put knots in an odd multi-crossing braid form. However, we claim that we can put any link with two or more components into any odd multi-crossing braid form. In this section we prove this for $n=3$.

\begin{theorem}
\label{thm:tripleAlexander}
Every link with two or more components can be represented as a closed triple crossing braid. 
\end{theorem}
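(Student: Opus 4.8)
The plan is to follow the same two-stage strategy used for Theorem~\ref{thm:evenAlexander}: first reduce everything to a statement about permutations, and then use Lemma~\ref{lem:levelPosition} to pass back from permutations to an honest braid. The essential new feature is a parity obstruction. For $n=3$ the crossing permutation is $\pi_j=(j,j+2)$, which swaps two positions of the \emph{same} parity and fixes the middle strand. Hence every product of triple-crossing permutations preserves the partition of $\{1,\dots,m\}$ into odd and even positions, so it lies in the subgroup $H=\mathrm{Sym}(\text{odd})\times\mathrm{Sym}(\text{even})$. Since the components of a closed braid correspond to the cycles of its underlying permutation, and every cycle of an element of $H$ stays within one parity class, the closure of any triple-crossing braid splits into the ``odd'' strands and the ``even'' strands; a knot would force a single cycle meeting both classes, which is impossible. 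This is exactly why the theorem must assume two or more components, and it tells us what we are allowed to aim for.

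Next I would record the algebraic input, an analogue of Lemma~\ref{lem:sameCycleType} inside $H$. Because $(j,j+2)$ is, after reindexing each parity class as $1,2,3,\dots$, simply an adjacent transposition of that class, the triple-crossing permutations generate all of $H$; in particular any transposition of two same-parity positions is realized by a single $\pi_j$. Conjugating by the $\pi_j$ as in Section~\ref{sec:evenAlexander} then lets us turn a given element of $H$ into any other element of $H$ of the same cycle type. I would also need the $n=3$ version of Lemma~\ref{lem:evenPair} to control the sign within each class: unlike the $n=4k$ case a single $\pi_j$ is odd, so the relevant bookkeeping is not the total sign but the parity of the number of transpositions used inside each class, and I would cancel these in disjoint pairs exactly as in the $n=4k$ argument.

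With this in hand the construction goes as follows. Start from a double-crossing braid of $L$, and use the hypothesis of two or more components to distribute the components across the two parity classes, placing at least one component on odd strands and at least one on even strands (stabilizing as needed so that there are enough strands and so that the underlying permutation can be taken in $H$). The braiding \emph{within} each class is produced by triple crossings whose middle strand comes from the opposite class, while the linking \emph{between} the two classes is produced precisely by those threaded middle strands. Choosing the levels of the middle strands via Lemma~\ref{lem:levelPosition} (now in its general $s$-crossing form rather than the $s=2$ form used for the even case), I would realize the two sublinks together with the correct inter-class linking as a genuine triple-crossing braid whose closure is $L$.

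The main obstacle is that, in contrast to the even case, we \emph{cannot} reproduce the original double crossings one at a time: a double crossing is the transposition $(i,i+1)$, which leaves $H$, so no product of triple-crossing permutations can equal it and Lemma~\ref{lem:levelPosition} cannot be applied with $s=2$ as it was for Theorem~\ref{thm:evenAlexander}. The work is therefore to build $L$ from its two parity sub-systems simultaneously and to show that the threaded middle strands can be arranged to give exactly the linking of $L$ between the two systems, with no spurious extra components. Verifying that these threadings reproduce the prescribed linking (and only that) is where I expect the real difficulty to lie, and it is also the precise point at which the two-component hypothesis is indispensable.
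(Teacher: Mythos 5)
You have correctly identified the central obstruction: the triple-crossing permutations $(j,j+2)$ all lie in $H=\mathrm{Sym}(\mathrm{odd})\times\mathrm{Sym}(\mathrm{even})$, a double crossing $(i,i+1)$ does not, so the strategy of Theorem~\ref{thm:evenAlexander} --- realize each double crossing separately via Lemma~\ref{lem:levelPosition} --- cannot work for $n=3$. But having identified the obstruction, your proposal does not actually get past it. The step you defer (``build $L$ from its two parity sub-systems simultaneously and show that the threaded middle strands can be arranged to give exactly the linking of $L$ between the two systems'') is not a technical verification left to the reader; it is the entire content of the theorem, and as stated it is a synthesis problem (reconstruct $L$ from prescribed sublinks and inter-class linking data) that is harder and vaguer than the original question. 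No mechanism is given for producing the required threadings, and Lemma~\ref{lem:levelPosition} cannot supply one, since that lemma only converts a product of crossing permutations into crossings --- it says nothing about realizing linking between strands that the permutations never mix. Your side remarks about sign bookkeeping are also unnecessary: a single $\pi_j=(j,j+2)$ is already an adjacent transposition of one parity class, so the $\pi_j$ generate all of $H$ with no analogue of Lemma~\ref{lem:evenPair} needed.

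The paper's route is essentially geometric rather than algebraic, and the missing idea is the clasp resolution of Lemma~\ref{lem:tripleLevel}. One first arranges (Lemma~\ref{lem:evenOdd}) that each string enters and leaves the braid in positions of the same parity. Then, rather than trying to realize a double crossing $\sigma$ by triple crossings, one realizes a \emph{crossing change}: writing $A\sigma=A\sigma\sigma A^{-1}A\sigma^{-1}$, it suffices to absorb the clasp $A\sigma\sigma A^{-1}$ into triple crossings. Pulling the strings taut reduces this to a single clasp between the two strands of $\sigma$, with every intervening strand passing over both or under both arcs of the clasp; such strands are resolved in pairs into triple crossings, a single leftover strand is threaded through the middle of the clasp, and if none remain one borrows a strand (this is where $m\ge 3$ is used). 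Iterating converts the braid into a sequence of triple crossings followed by a double-crossing braid in level position, whose residual permutation lies in $H$ and is then realized by triple crossings. This isotopy-based argument sidesteps your linking problem entirely, because every step is an isotopy of the original link, so the inter-class linking is never something that has to be verified --- it is preserved automatically.
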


This will be extended to other odd $n$ in the next section.

\subsection{Setup}

Consider an open braid with $m \ge 3$ strings. As before, we can label the strings $1,\dots,m$ from left to right, at the top of the whole braid. We call this label the \emph{index} of a string. At a given section of the braid, we can also label the strings $1, \dots, m$ from left to right at the top of this section. We call this the \emph{position} of the string in the section. Note that for a given string, the index stays the same from the top of the braid to the bottom, but the position changes every time it is involved in a crossing.

First we show a lemma which takes a double crossing braid and isotopes it into a form that is easier to work with.

\begin{lemma}
\label{lem:evenOdd}
Let $\alpha$ be a closed double crossing braid with at least 2 components. Then we can find an isotopy of $\alpha$ such that in the resulting braid, a set of components always enter and leave the braid in an odd position, while the remaining components always enter and leave the braid in an even position.
\end{lemma}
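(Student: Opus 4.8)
The plan is to recast the statement entirely in terms of the strand permutation $\sigma = \phi(\alpha) \in S_m$ and then realize the desired form by Markov moves. Reading off the closure, the strands belonging to a single component of $\alpha$ occupy exactly the positions of one cycle of $\sigma$, both at the top and at the bottom of the braid; hence ``a component always enters and leaves in an odd position'' is equivalent to saying that its cycle consists only of odd positions, and the whole conclusion is equivalent to asking that $\sigma$ preserve the parity of positions, i.e.\ that $\sigma$ lie in $S_{\mathrm{odd}} \times S_{\mathrm{even}}$, together with a designation of which components are the odd ones. I would use two operations, each an isotopy of the closed braid: \emph{conjugation} $\alpha \mapsto g\alpha g^{-1}$, which replaces $\sigma$ by $\phi(g)\sigma\phi(g)^{-1}$ and so realizes an arbitrary relabeling of the positions while fixing the cycle type, and \emph{stabilization}, which appends a strand at position $m+1$, lengthens by one the cycle through the outermost strand, increases $m$ by one, and preserves the link type.

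With these tools the argument runs as follows. Since $\alpha$ has at least two components, I can split its components into two nonempty groups $\mathcal{O}$ and $\mathcal{E}$; let $O$ and $E$ be the total numbers of strands (the sums of the relevant cycle lengths) in the two groups, so $O + E = m$. The target places $\mathcal{O}$ on the odd positions and $\mathcal{E}$ on the even positions, and since every position is occupied this forces $O$ to equal the number of odd positions and $E$ the number of even positions; that is, I need $O = E$. If $O \neq E$, say $O > E$, I first bring a component of $\mathcal{E}$ to the outermost position by conjugation and stabilize, repeating $O - E$ times so that the even group absorbs the extra strands and the counts become equal, with $m$ now even. Finally I conjugate once more by a permutation $\tau$ chosen so that the cycles of the $\mathcal{O}$-components are sent to the odd positions and those of the $\mathcal{E}$-components to the even positions; this is possible precisely because the two groups now have the correct sizes. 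The resulting braid has $\phi$-image in $S_{\mathrm{odd}}\times S_{\mathrm{even}}$, which is exactly the claimed form.

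The step I expect to be the main obstacle is the balancing of the two counts $O$ and $E$: I must check that stabilization can always be applied to a component of my choosing (achieved by first conjugating that component to the outermost strand), that each stabilization adds its new strand to the intended group without merging or splitting components, and that none of this disturbs the hypothesis of having at least two components. Once the counts are equalized the final relabeling is purely combinatorial, and the requirement of at least two components is exactly what guarantees that the initial split into nonempty groups $\mathcal{O}$ and $\mathcal{E}$ exists, mirroring the earlier observation that a knot can never be placed in an odd multi-crossing form.
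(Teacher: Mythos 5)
Your proposal is correct and takes essentially the same route as the paper: both arguments rest on the observations that a component occupies the same set of positions at the top and bottom of the braid (the cycle of $\phi(\alpha)$ through it), that conjugation freely relabels which component sits at which index while preserving the link, and that stabilization adds a strand to whichever component has been conjugated to the outermost position. The only difference is organizational — you fix the partition of components up front, balance the strand counts $O=E$ by stabilizing, and then conjugate once, whereas the paper proceeds greedily index by index and stabilizes on demand — and the balancing step you flag as the main obstacle is handled correctly.
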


\begin{proof}
First, look at the the string with index 1, and remember this to be an odd component. Then we check if the string with index 2 is another component, which we remember to be an even component. If it is the same component, then we can find some string from another component that has a larger index, and make it have index 2 as follows. If the string from another component has index $i$, then we can conjugate the open braid by $\sigma^{i-1}\sigma^{i-2}\cdots\sigma^2$, so that this string now has index 2.

Then we continue this process, checking at every step that a string with an odd (similarly even) index $i$ is an odd (similarly even) component or a new component, and if not, finding an appropriate component and making it index $i$. Then at the top of the braid, the strings alternate between the odd and the even components. If we do not have enough strands for the odd components or the even components, then we can perform stabilizations on a component to obtain new strings.
\end{proof}

Note that once we go through this process and make sure that the top of the braid alternates between odd and even components, we know that the bottom of the braid follows the same pattern. This is because if a string leaves the braid in an odd position $i$, it must be the same component as the string that enters the braid at the $i$th index. This means that this string must be an odd component. Similarly a string that leaves the braid in an even position must be an even component. This also means that if a string enters the braid at an odd (similarly even) index, then it must leave the braid at an odd (similarly even) index. In other words, the parity of the position of the string at the bottom of the braid must be the same as the parity of its index.

\subsection{Putting the braid in level position}

Recall from Section~\ref{sec:evenAlexander} that we can put the braid in level position; we assign heights to each string, and if we put in crossings in a way that keeps each string on its level, we can pull the strings taut and then we are left with a set of crossings which represents the corresponding permutation of the strings. We can assign these heights according to the index of each string, so that the index and the level coincide.

We describe a process to isotope this braid so that, starting at the top of the braid and moving down, we are left with a sequence of triple crossings and then a double crossing braid in level position. (Note that when we refer to the ``top'' or the ``bottom'' of the braid, we always mean the beginning and the end of the braid word, rather than the heights that are assigned to a braid in level position, which we refer to in terms of its level.)

First we define a notion that becomes key to this argument. A \emph{clasp} is two strings that are hooked together as below (Fig.~\ref{fig:clasp}).

\begin{figure}[ht]
	\centering
	\includegraphics[scale=0.4]{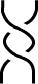}
	\caption{Two strings hooked together is called a clasp.}
	\label{fig:clasp}
\end{figure}

Note that in a braid, a clasp appears as $\sigma_i \sigma_i$.

\begin{lemma}
\label{lem:tripleLevel}
Let $\alpha$ be a double crossing braid with 3 or more strings. We can find an isotopy of this braid such that we are left with a sequence of triple crossings followed by a double crossing braid in level position.
\end{lemma}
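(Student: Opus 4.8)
The plan is to split $\alpha$ into its \emph{topological} content, which carries the genuine linking, and its \emph{routing} content, which only realizes the underlying permutation, and then to show that the topological content can be produced entirely by triple crossings while the routing becomes the bottom level-position double crossing braid. First I would fix the heights so that level equals index, exactly as in the setup preceding the lemma, and use this to define a reference level-position double crossing braid $\rho$ realizing the permutation $\phi(\alpha)$. Writing $\alpha = (\alpha\rho^{-1})\,\rho$, the factor $P := \alpha\rho^{-1}$ has trivial permutation, hence is a pure braid, so all of the linking of $\alpha$ is concentrated in $P$, while $\rho$ is already a level-position double crossing braid of precisely the desired bottom type. The goal then becomes to replace $P$, up to isotopy and up to absorbing stray permutations into $\rho$, by a sequence of triple crossings.

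Next I would reduce $P$ to a product of \emph{clasps}. Since the pure braid $P$ is a product of standard pure-braid generators, each of which is a clasp $\sigma_i^{\pm 2}$ conjugated by a permutation braid, I can isotope $P$ into a product of clasps interspersed with level-position routing, and then slide all of the routing downward past the clasps (which merely relocates each clasp to a new adjacent pair of strands) so that it merges with $\rho$ at the bottom. After this step $\alpha$ has the form (product of clasps)$\,\cdot\,$(level-position double crossing braid), and it remains only to convert each individual clasp into triple crossings.

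The heart of the argument, and the step I expect to be the main obstacle, is the local replacement of a single clasp by triple crossings. A clasp hooks two \emph{adjacent} strands, i.e.\ strands at positions of opposite parity, whereas the net permutation $\pi_j = (j,j+2)$ of a triple crossing preserves parity; so one cannot simply permute the clasped strands into the outer positions of a single triple crossing and square it. The resolution is to exploit the fact that a triple crossing is not merely the transposition $(j,j+2)$ but, when resolved into double crossings, contains genuine crossings of the middle strand with each outer strand. Using a third strand adjacent to the clasped pair (available since $\alpha$ has at least three strings, taking the strand on whichever side exists), I would build $\sigma_i^{\pm 2}$ from a short sequence of triple crossings $\pi_i$ together with a level-position double crossing that returns the outer strands to their positions: the triple crossings contribute exactly the desired hooking of the adjacent pair, while the leftover transposition of the outer strands is undone trivially by the height ordering. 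The delicate points are verifying that the over/under data of the triple crossings reproduces the clasp and not its inverse, and that the permutations left over from these triple crossings can be pushed to the bottom and absorbed into the level-position braid without disturbing clasps already converted.

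Finally I would assemble the pieces by induction on the number of clasps: each conversion produces triple crossings, which I collect at the top, and additional level-position routing, which I collect at the bottom, maintaining the invariant that the processed portion has the form (triple crossings)$\,\cdot\,$(level-position double crossing braid). When no clasps remain, $\alpha$ is in exactly the claimed form, and the level choices for the triple crossings can be made consistently, just as in the construction underlying Lemma~\ref{lem:levelPosition}. Note that this argument uses no hypothesis on the number of components, so it applies to an arbitrary double crossing braid with three or more strings; the even--odd arrangement of Lemma~\ref{lem:evenOdd} is needed only later.
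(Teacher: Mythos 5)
Your overall strategy---factor $\alpha$ as a pure braid $P=\alpha\rho^{-1}$ times a reference level-position braid $\rho$, reduce $P$ to clasps, and convert each clasp into triple crossings with the help of a neighboring strand---is close in spirit to the paper's proof, which likewise reduces everything to converting a clasp (inserted crossing by crossing via $A\sigma = A\sigma\sigma A^{-1}\cdot A\sigma^{-1}$) into triple crossings. However, there is a genuine gap at exactly the step you identify as the heart of the argument. The two strands joined by a clasp are adjacent in \emph{position} at the clasp, but their \emph{indices} (hence their assigned levels) $a<b$ can be arbitrarily far apart; this is unavoidable, both because the pure braid group is not generated by the adjacent clasps $\sigma_i^{\pm 2}$ alone (you need the conjugated generators linking non-adjacent strands), and because a clasp sitting inside a braid that is in level position above and below it must carry its strands from levels $a$ and $b$ to a common location and back. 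Consequently the clasp is not a local three-strand configuration: when the braid is pulled taut, each of the $b-a-1$ strands whose levels lie strictly between $a$ and $b$ is forced either over both crossings of the clasp or under both. The bulk of the paper's proof is devoted to precisely this: normalizing which clasped strand is on top (Fig.~\ref{fig:tripleSigma}), separating the trapped strands into an ``over'' group and an ``under'' group (Figs.~\ref{fig:tripleDiagonalTaut}--\ref{fig:tripleHook}), resolving them in \emph{pairs} into triple crossings (Fig.~\ref{fig:tripleResolve}), threading a leftover odd strand through the middle of the clasp, and borrowing a strand from elsewhere when no trapped strands remain (Fig.~\ref{fig:tripleResolveLast}, which is where the hypothesis of at least three strings is actually used). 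None of this analysis appears in your proposal.

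The local identity you do propose is also not pinned down, and as stated it cannot work on permutation grounds: a clasp is a pure braid, each triple crossing contributes the parity-preserving transposition $(i,i+2)$, and a single double crossing contributes an adjacent transposition, so ``a short sequence of triple crossings together with one level-position double crossing'' has nontrivial underlying permutation unless the counts are arranged carefully---and you have not exhibited the over/under pattern showing that the result is the given clasp rather than its inverse or a pair of cancelling crossings. The correct local statement, which the paper establishes pictorially, is that a clasp together with one extra strand passing under (or over) \emph{both} of its crossings equals two triple crossings; your write-up gestures at this but defers the verification, and without the taut-string analysis of the intermediate strands there is no way to reduce a general clasp to that local picture. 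To repair the argument you would need to reproduce something equivalent to the paper's treatment of the $b-a-1$ trapped strands.
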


\begin{proof}
By Lemma~\ref{lem:evenOdd}, we can find an isotopy of $\alpha$ such that the parity of the position of each string is preserved from the top of the braid to the bottom.

In the course of this argument, we can ``ignore'' any sequence of triple crossings at the top of the braid, as long as we do not try to take conjugations or stabilizations. This is because triple crossings always preserve the parity of the position of each string, which is the property that is central to this argument. If we can turn some double crossing braid into a triple crossing braid, we can clearly do the same to a sequence of triple crossing braid followed by this double crossing braid.

Start with a double crossing braid $\alpha$ such that the parity of the position of each component remain the same. Assign the indices $1, \dots, m$ to the $m$ strings. We can then find a new (different) double crossing braid $\alpha'$, with the same projection as $\alpha$ but different crossings, that is in level position with respect to this leveling. We find an isotopy of $\alpha$ such that we are left with a sequence of triple crossings followed by $\alpha'$.

We start from the top of the braid $\alpha$, and at every step we find a crossing that is different from the braid $\alpha'$ in level position, and change this crossing so that we are closer to being in level position. Note that in changing this crossing, we may introduce triple crossings at the top of the braid to preserve the original braid type, but we are not concerned with this.

This will be done recursively. Suppose we want to change a double crossing at the top of the braid. Then we can introduce two trivial crossings under this crossing. Then the two crossings at the top can be turned into triple crossings as below. Thus we obtain a double crossing braid with this top crossing flipped (Fig.~\ref{fig:tripleTop}).

\begin{figure}[ht]
	\centering
	\includegraphics[scale=0.3]{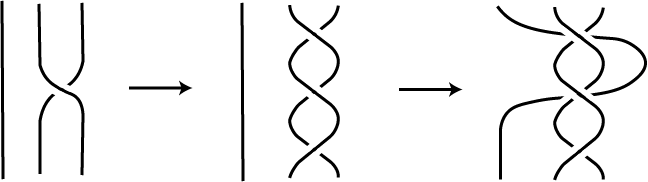}
	\caption{Changing a crossing at the top of the braid. Introduce two trivial crossings below, and use an extra strand to turn the two top crossings into triple crossings. We are now left with two triple crossings followed by a double crossing, which is now different from the one we started with.}
	\label{fig:tripleTop}
\end{figure}

Now suppose we want to change a crossing $\sigma$ in the middle of a braid, assuming that all crossings above it have been changed so that the portion above it is in level position. Let $A$ be the portion of the braid above this crossing, not including itself. Then we can add trivial crossings and obtain $A\sigma = A\sigma \sigma A^{-1} A \sigma^{-1}$ (Fig.~\ref{fig:tripleSequence}). It suffices to turn $A \sigma \sigma A^{-1}$ into triple crossings, for this would mean we have changed $A \sigma$ into a sequence of triple crossings followed by $A \sigma^{-1}$. Note that since $A$ is in level position, we know that $A^{-1}$ must also be in level position with the same leveling of strings.

\begin{figure}[ht]
	\centering
	\includegraphics[scale=0.25]{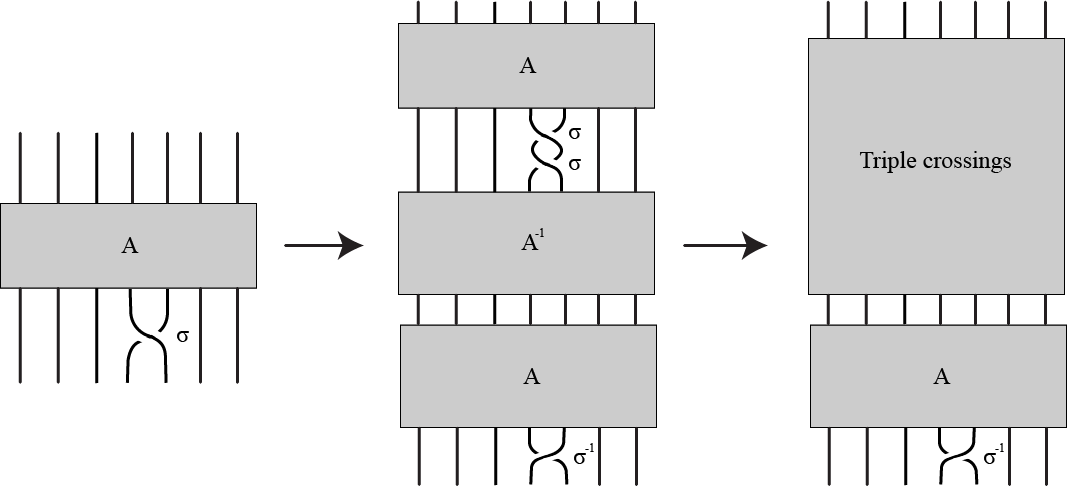}
	\caption{Starting with $A \sigma$, we can add $A^{-1} A \sigma^{-1}$ below it, since it is equal to the identity. If we can turn $A \sigma \sigma A^{-1}$ into triple crossings, then we are left with $A \sigma^{-1}$ as desired.}
	\label{fig:tripleSequence}
\end{figure}

Let $\sigma$ be a crossing that switches the strings with indices $a,b$, where $a < b$. Note that the $\emph{position}$ of these strings must be adjacent on this portion of the braid. Recall that $A$ and $A^{-1}$ are in level position, so these strings are in the levels $a$, $b$ respectively in both $A$ and $A^{-1}$. The crossings $\sigma \sigma$ form a clasp that violates the level position, and may get in the way of strings that are between $a$ and $b$. For clarification, call the first crossing of the clasp $\sigma_r$, and the second one $\sigma_s$.

Let $i$, $i+1$ be the positions of the two strings at the top of the clasp. Note that because $\sigma_r$ is a crossing that violates the level position, $a$ must be the understrand of this crossing. Then $a$ is the overstrand of $\sigma_s$.

We consider two possible cases: $a$ could be in the $i$th position at the top of the clasp, or it could be in the $i+1$st position. If $a$ is in $i+1$st position, this means that $b$ is in the $i$th position. Now, pull the parts of $a$ and $b$ in $A$ and $A^{-1}$ taut. Then there must be a crossing between $a$ and $b$ in both $A$ and $A^{-1}$. Let $\sigma_p$ the one in $A$, and let $\sigma_q$ be the one in $A^{-1}$. Observe that $a$ is the overstrand of both of these crossings, since $a$ is in a higher level than $b$. Then $\sigma_q$ cancels with $\sigma_s$, since $a$ is the overstrand in both. We can then move $\sigma_p$ down so that $\sigma_p$ and $\sigma_r$ is a clasp between the $a$ and $b$ strings; however $a$ is now in the $i$th position at the top of the clasp, and $b$ is in the $i+1$st (Fig.~\ref{fig:tripleSigma}). Note that the portions above and below this clasp are still in level position. Hence we can assume that $a$ is the $i$th position at the top of the clasp, and $b$ is in the $i+1$st position.

\begin{figure}[ht]
	\centering
	\includegraphics[scale=0.25]{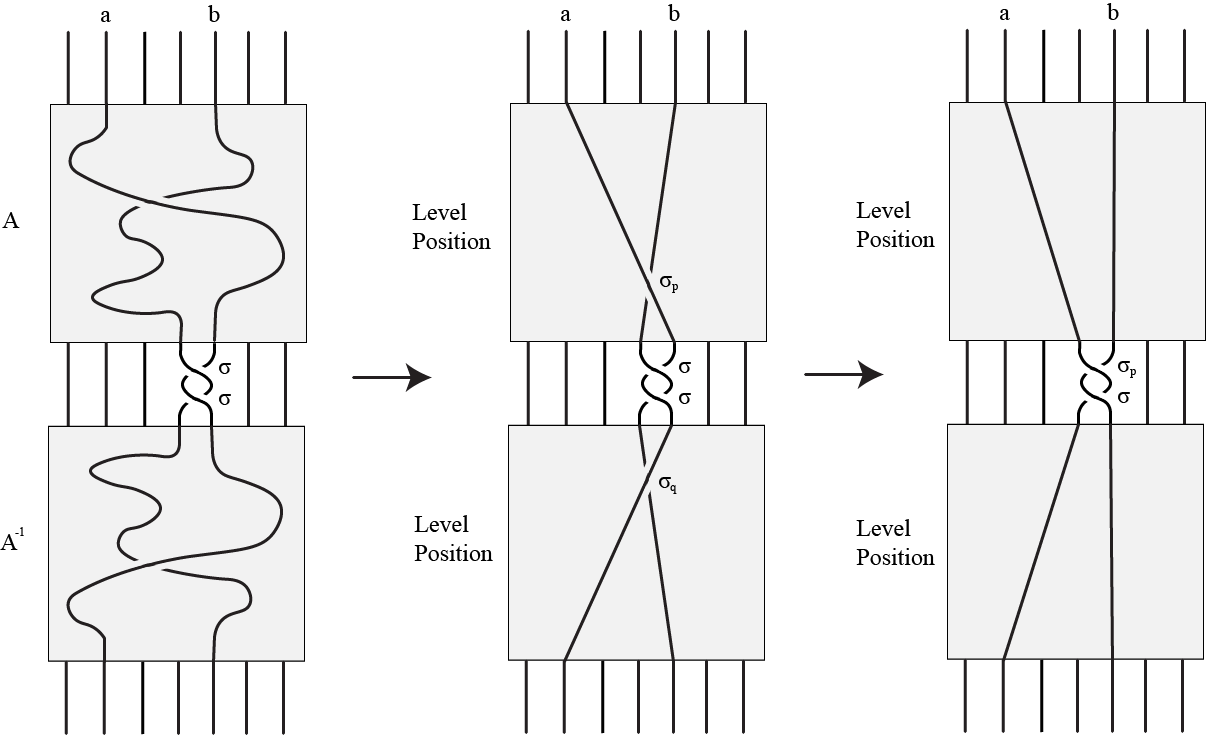}
	\caption{If $a$ is the $i+1$st position at the top of the clasp, then there must be a crossing $\sigma_p$ between $a$ and $b$ in $A$, and a corresponding crossing $\sigma_q$ in $A^{-1}$. Then $\sigma_q$ cancels with $\sigma_s$, and a clasp is formed by $\sigma_p$ and $\sigma_r$. The portions above and below the clasp are still in level position. Thus we can assume that $a$ is in the $i$th position at the top of the clasp.}
	\label{fig:tripleSigma}
\end{figure}

The strings $a$ and $b$ look like the following figure (Fig.~\ref{fig:tripleDiagonalMess}). Both strings stay on their levels until they reach the clasp, where they wrap around each other and move back to their original levels.

\begin{figure}[ht]
	\centering
	\includegraphics[scale=0.2]{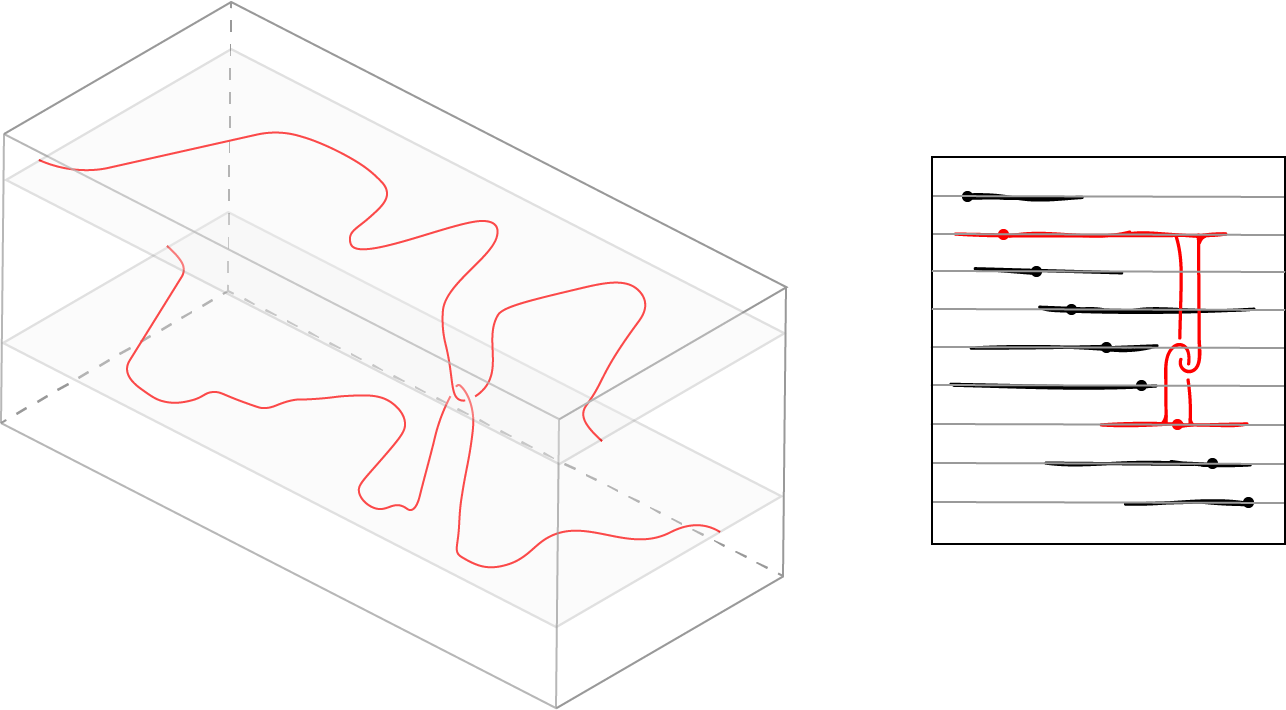}
	\caption{The left picture depicts the strings $a$ and $b$. Note that the other strings are not drawn, but they would all stay on the same level. The right picture is a view from the side, where this time the other strings are also drawn. The two strings move toward the level of the other string, wrap around each other, and then go back to their original levels.}
	\label{fig:tripleDiagonalMess}
\end{figure}

Now pull all the strings taut. All strings on levels higher than $a$ or lower than $b$ (note lower numbers are on higher levels here) are not affected, so they will go straight down from the top of the braid to the bottom. The strings $a$ and $b$ create a diagonal plane between the $a$th and the $b$th levels. The strings that are in between the levels $a$ and $b$ will be either above or below this plane, depending on what its position was at the clasp. If its position was greater than $i+1$, then it will be above this diagonal plane; it its position was less than $i$, then it will be below the plane. Thus we have the following picture (Fig.~\ref{fig:tripleDiagonalTaut}).

\begin{figure}[ht]
	\centering
	\includegraphics[scale=0.2]{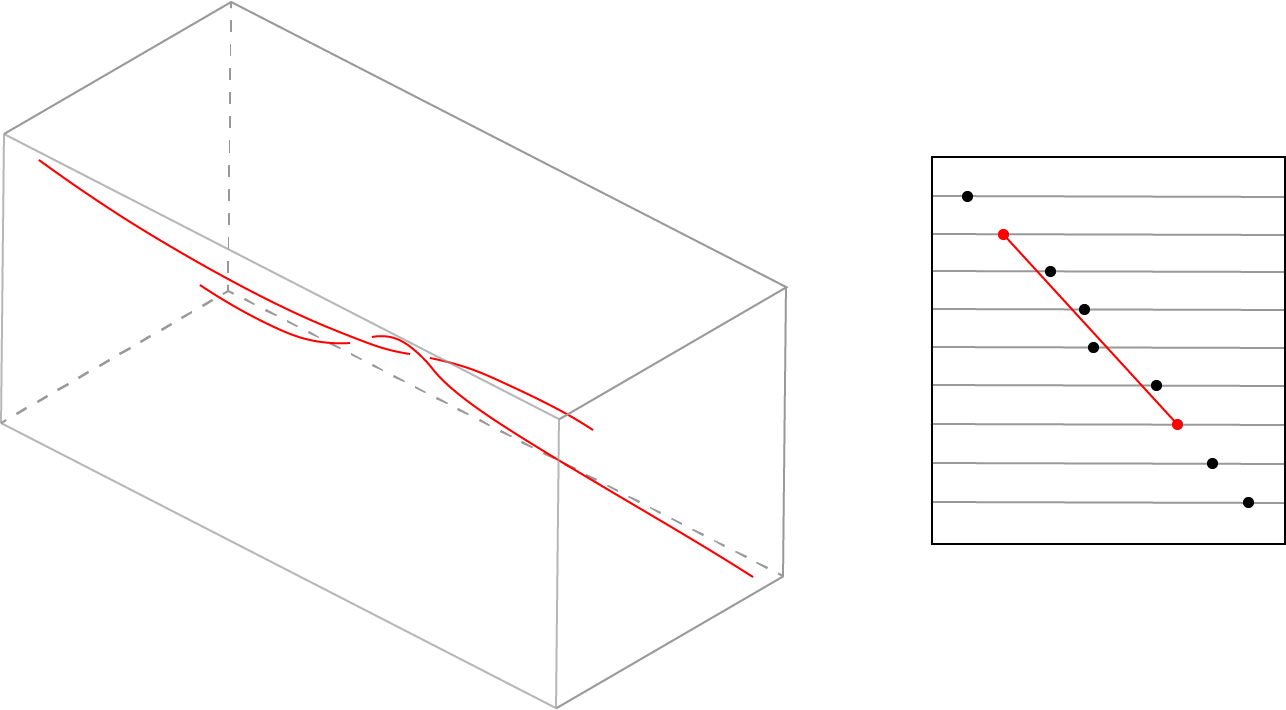}
	\caption{The left picture shows what happens to $a$ and $b$ once the strings are pulled taut. The right picture shows the view from the side, where we see which strings lie above/below the diagonal plane created by $a$ and $b$.}
	\label{fig:tripleDiagonalTaut}
\end{figure}

Therefore when all of these strings are pulled taut we have the following picture (Fig.~\ref{fig:tripleHook}).

\begin{figure}[ht]
	\centering
	\includegraphics[scale=0.3]{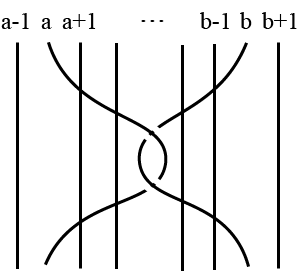}
	\caption{Once the strings are pulled taut, we are left with one clasp between the strings $a$ and $b$, and all other strings are either above or below this clasp.}
	\label{fig:tripleHook}
\end{figure}

The strings that are in between $a$ and $b$ will go over both strands of the clasp, or under both strands of the clasp. This means that all of these $b-a-1$ strings can be moved to one side of the clasp. But any pair of such strings can be turned into two triple crossings as below (Fig.~\ref{fig:tripleResolve}). 

\begin{figure}[ht]
	\centering
	\includegraphics[scale=0.3]{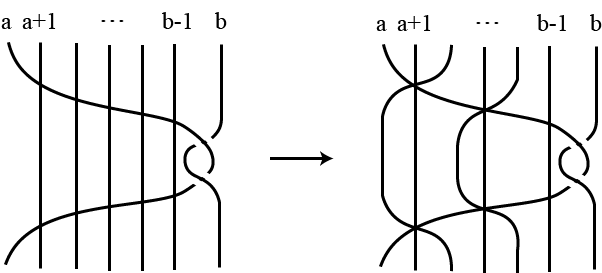}
	\caption{We can move the strings between $a$ and $b$ to one side, and resolve pairs of them into triple crossings.}
	\label{fig:tripleResolve}
\end{figure}

Therefore we only need to consider the case when one string and the clasp are left, or the case when only the clasp is left. If we have one string left we can simply let it run through the middle of the clasp (Fig.~\ref{fig:tripleResolveLast}). Note that we can always do this because it can be under or over both strings of the clasp, but it is never over one and under the other. If only the clasp is left we can take a strand from another string, and pull it under both crossings of the clasp. We can always do this because we have assumed that we have at least 3 strings.

\begin{figure}[ht]
	\centering
	\includegraphics[scale=0.3]{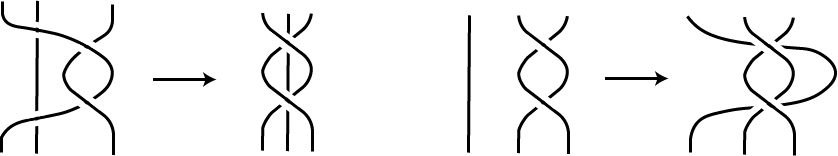}
	\caption{If one string is left, we let it run through the middle of the clasp. The left figure depicts the case when this string goes under both strands of the clasp. If only the clasp is left, then we can take an extra string and pull it under both crossings of the clasp.}
	\label{fig:tripleResolveLast}
\end{figure}

Therefore we can recursively change every crossing to turn the braid into a sequence of triple crossings followed by a double crossing braid in level position.
\end{proof}

\subsection{Obtaining the triple crossing braid}

All we have to do now is to turn this double crossing braid in level position into a triple crossing braid.

\begin{proof}[Proof of Theorem~\ref{thm:tripleAlexander}]
Let $L$ be a link with at least 2 components. Put it in double crossing braid form. If it is a 2-braid, take a stabilization so that it has at least 3 strings. By Lemma~\ref{lem:tripleLevel}, we can find an isotopy of this braid so that it becomes a sequence of triple crossings followed by a double crossing braid in level position. It remains to isotope this double crossing braid into a triple crossing braid.

Recall that each string enters and leaves the braid in positions with the same parity. We can pull the strings taut so that the braid is just a permutation of the even components, and a permutation of the odd components. We can then shift the strings slightly so that the permutation of the odd components occur in the top half of the braid, and the permutation of the even components occur in the bottom half of the braid (Fig.~\ref{fig:tripleLevel}).

\begin{figure}[ht]
	\centering
	\includegraphics[scale=0.3]{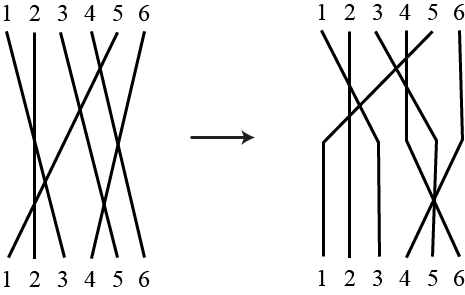}
	\caption{Once the strings are pulled taut, we are left with a permutation of the odd strings and a permutation of the even strings. Since the braid is in level position, we can separate the two permutations as above.}
	\label{fig:tripleLevel}
\end{figure}

Now, any permutation of the odd strands is generated by a transposition of two adjacent odd strands. Note that a triple crossing switches two adjacent odd strands. We can therefore obtain any permutation of the odd strands as a sequence of triple crossings, and similarly for a permutation of the even strands. Thus we have a triple crossing braid form for $L$.
\end{proof}

\section{Odd Multi-crossing Braids}
\label{sec:oddAlexander}

We have already seen that given a link $L$ with two or more components, it can be put into a triple-crossing braid form. In this section, we extend this result to any $n$-crossings, where $n$ is odd. Our goal is to prove the following theorem:

\begin{theorem}
\label{thm:oddAlexander}
Every link with two or more components can be represented as a closed $n$-crossing braid, for all $n$.
\end{theorem}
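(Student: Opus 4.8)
The plan is to reduce to the case of odd $n$ (for even $n$ the stronger Theorem~\ref{thm:evenAlexander} already gives the result) and to adapt the strategy of Theorem~\ref{thm:tripleAlexander}. Recall that an odd crossing permutation $\pi_j=(j,j+n-1)(j+1,j+n-2)\cdots$ pairs entries whose positions differ by the even number $n-1$, hence it preserves the parity of every position; this is exactly why at least two components are needed. Given a link $L$ with at least two components, I would first put it in double crossing braid form, apply Lemma~\ref{lem:evenOdd} to split the components into ``odd'' and ``even'' ones whose positions keep their parity throughout the braid, and then run the geometric argument of Lemma~\ref{lem:tripleLevel} followed by pulling the strings taut, exactly as in the proof of Theorem~\ref{thm:tripleAlexander}. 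This leaves the braid as a permutation $\rho_{\mathrm o}$ of the odd strands together with a permutation $\rho_{\mathrm e}$ of the even strands. The whole problem is thereby reduced to a statement purely about permutations: write the parity-preserving permutation $\rho=\rho_{\mathrm o}\rho_{\mathrm e}$ as a product of $n$-crossing permutations, after which Lemma~\ref{lem:levelPosition} (applied to the corresponding double-crossing braid one disjoint layer at a time, with $s=2$) converts that product into an honest $n$-crossing braid realizing $\rho$, with the correct over/under data.

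The core is therefore the odd analogue of Lemma~\ref{lem:sameCycleType}: for $m$ large enough, which parity-preserving permutations are products of $n$-crossing permutations? I would establish the ``shape'' part --- that modulo signs every parity-preserving permutation is reachable --- by re-deriving Lemmas~\ref{lem:moveMaxPerm}--\ref{lem:sameCycleType} for odd $n$ while tracking the parity class of each entry. Conjugation by $\pi_i$ still disturbs only entries in the range $[i,i+n-1]$, and now an entry never changes parity, so one may argue separately inside the odd sub-lattice and the even sub-lattice, where an $n$-crossing restricts to a reversal of either $\tfrac{n+1}{2}$ or $\tfrac{n-1}{2}$ consecutive same-parity strands; the same incrementing-and-reordering conjugations then go through. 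Since $m$ can be enlarged freely by stabilizing, the only genuine constraint is the sign.

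The main obstacle is exactly this sign obstruction, which generalizes the split between $n=4k+2$ and $n=4k$ in Theorem~\ref{thm:evenAlexander}. An $n$-crossing permutation is a product of $\tfrac{n-1}{2}$ transpositions, each lying within a single parity class, and a short count of the signs it induces on the two classes shows that the reachable pairs $(\mathrm{sgn}\,\rho_{\mathrm o},\mathrm{sgn}\,\rho_{\mathrm e})$ depend on $n\bmod 4$, and more finely on $n\bmod 8$. For $n\equiv 3\pmod 4$ the crossings starting at an odd and at an even position together realize both basis vectors of $(\Z/2)^2$, so there is no sign obstruction and every $\rho$ of the right cycle type can be written; we may then realize each same-parity double crossing individually, just as in the $n=4k+2$ case. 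For $n\equiv 1\pmod 4$ the crossings lie in a proper ``equal-sign'' subgroup (contracting further when $n\equiv 1\pmod 8$), so $\rho$ is reachable only when $\mathrm{sgn}\,\rho_{\mathrm o}$ and $\mathrm{sgn}\,\rho_{\mathrm e}$ satisfy the corresponding condition, and I would instead realize the double crossings in disjoint pairs --- within a single parity class when $n\equiv 1\pmod 8$ --- using the dummy-crossing device of the $n=4k$ argument whenever two crossings to be paired share a strand.

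To defeat the obstruction I would pre-adjust the two class-signs by stabilization before the reduction. Restricted to the odd strands, each odd component of $L$ contributes a single cycle to $\rho_{\mathrm o}$ of length equal to its number of strands, so stabilizing an odd (respectively even) component lengthens that cycle by one and flips $\mathrm{sgn}\,\rho_{\mathrm o}$ (respectively $\mathrm{sgn}\,\rho_{\mathrm e}$). Because $L$ has at least two components, Lemma~\ref{lem:evenOdd} guarantees at least one odd and one even component, so the two signs can be flipped independently into the required subgroup while at the same time stabilizing enough to meet the size hypothesis of the generation lemma. Once the signs are compatible, the generation claim expresses $\rho$ as a product of $n$-crossing permutations and Lemma~\ref{lem:levelPosition} finishes. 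The delicate part throughout is the parity bookkeeping --- both in re-proving the conjugation lemmas for odd $n$ and in the sign count that dictates the pairing --- which is the odd-$n$ counterpart of handling disjoint pairs in the $n=4k$ case.
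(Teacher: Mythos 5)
Your proposal follows essentially the same route as the paper: for odd $n$ it reduces, via Lemma~\ref{lem:evenOdd} and Lemma~\ref{lem:tripleLevel}, to writing a parity-preserving product of same-parity adjacent transpositions as a product of $n$-crossing permutations, exploits the fact that odd $n$-crossing permutations restrict to smaller (even) crossing permutations on each parity class so that the conjugation machinery of Section~\ref{sec:evenAlexander} applies, splits into the cases $n\equiv 3\pmod 4$, $n\equiv 5\pmod 8$, $n\equiv 1\pmod 8$ according to exactly the sign obstruction the paper identifies, and absorbs that obstruction by pairing crossings with dummy crossings and adjusting parities by stabilization (the paper's 3-stabilizations). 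The only real difference is packaging: the paper routes the reduction through Theorem~\ref{thm:tripleAlexander} and then converts triple crossings to $n$-crossings, so Lemma~\ref{lem:levelPosition} is invoked with $s=3$ rather than $s=2$, since a same-parity transposition $(i,i+2)$ is realized geometrically as a triple crossing with the intermediate strand passing through.
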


\subsection{Setup}

We start by putting $L$ in a triple crossing braid form by Theorem~\ref{thm:tripleAlexander}, and finding an equivalent $n$-crossing braid. In other words, we want to show that we can produce the triple crossings from a sequence of $n$-crossings.

As in the case for $n$ even, by Lemma~\ref{lem:levelPosition} it suffices to show that we can obtain each triple crossing permutation as a combination of the $n$-crossing permutations. The triple crossing permutations are all transpositions of the form $(i, i+2)$, and when $n$ is odd, the $n$-crossing permutations are $\pi_j = (j,j+n-1)(j+1,j+n-2)\cdots(j+\frac{n-3}{2},j+\frac{n+1}{2})$. Observe that there are two types of triple crossing permutations: those that switch even numbered strings, and those that switch odd numbered strings. We call them \emph{even-string transpositions} and \emph{odd-string transpositions} respectively.

\subsection{Obtaining pairs of transpositions with same parity}

First we show that given a sufficiently large number of strings, we can use the $n$-crossing permutations to obtain any pair of even-string transpositions, or any pair of odd-string transpositions. Note that these pairs of transpositions must be ``disjoint'' in that the two transpositions permute 4 distinct numbers, for otherwise we would have a 3-cycle or the identity function. While we refrain from using the word ``disjoint'' in this context, it is worth noting that this is different from the meaning of disjoint crossings defined for Lemma~\ref{lem:levelPosition}. For example, a triple crossing permuting the first three strands would have the corresponding transposition $(1,3)$, and a triple crossing permuting the second to fourth strands would have the corresponding transposition $(2,4)$. These two crossings are not disjoint, but the two corresponding permutations are considered to be ``disjoint'' since they involve four distinct numbers.

We can always make sure that we have enough strands as follows. Take the second string from the right, and perform an Type II Reidemeister move over the rightmost string. Then we can stabilize the portion that has now become the rightmost string. We have now added three double crossings, which can be put together into a triple-crossing (Fig.~\ref{fig:3-stabilization}). We call this move a \emph{3-stabilization}.

\begin{figure}[ht]
	\centering
	\includegraphics[scale=0.2]{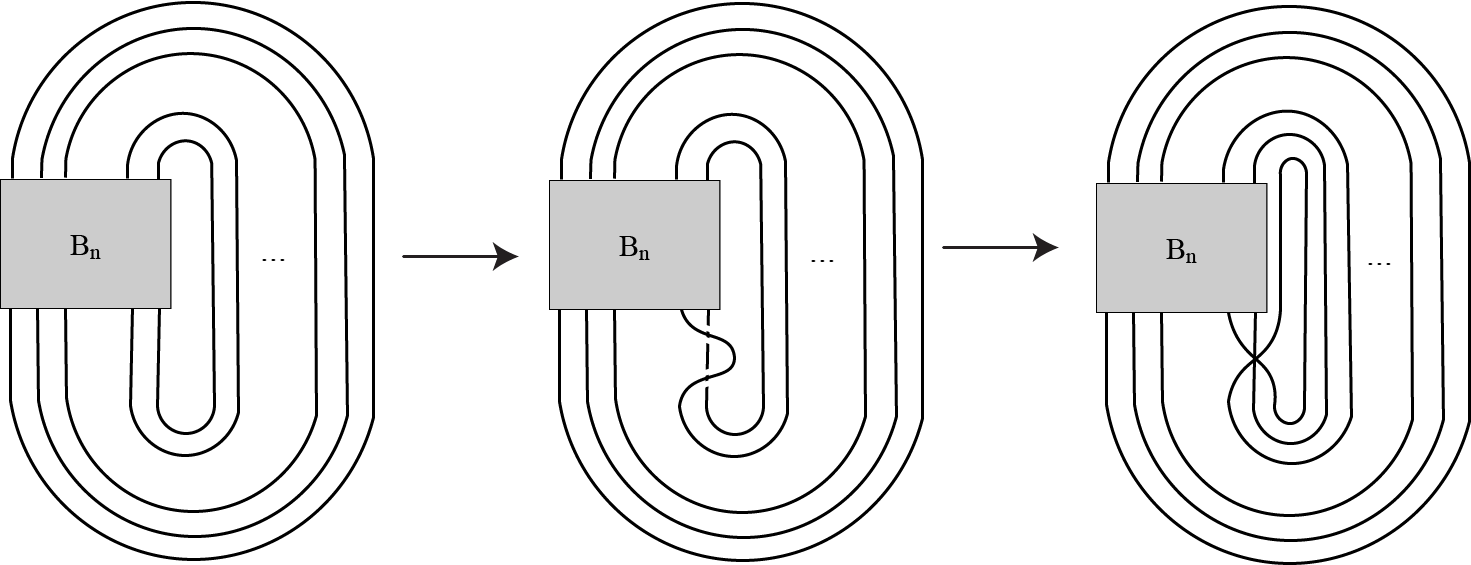}
	\caption{A 3-stabilization.}
	\label{fig:3-stabilization}
\end{figure}

We start by obtaining one pair of even-string transpositions, and one pair of odd string transpositions.

\begin{lemma}
\label{lem:sameParityPairs}
Let $n$ be odd, and let $m \ge \frac{5n-1}{2}$. Then we can obtain the permutations $(3,n)(2n-1,2n+1)$ and $(2,n+1)(2n,2n+2)$, as a product of the $n$-crossing permutations over $S_m$.
\end{lemma}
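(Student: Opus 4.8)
My plan is to prove this just as in the even case (Lemma~\ref{lem:evenPair}), by the ``relocate one transposition, then multiply by the seed to cancel the rest'' technique. The key observation is that \emph{both} target permutations are assembled from transpositions of a single crossing permutation, namely
\[
\pi_2 = (2,n+1)(3,n)(4,n-1)\cdots\left(\tfrac{n+1}{2},\tfrac{n+5}{2}\right):
\]
the pair $(2,n+1)$ is its outer transposition and $(3,n)$ is the next one in. Recall from the discussion of conjugation that a crossing permutation $\sigma$ satisfies $\sigma\pi\sigma^{-1}=\sigma\pi\sigma$, relocates the entries of $\pi$ according to $\sigma$, and preserves cycle type. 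Writing $\pi_2=t_1t_2\cdots t_k$ as a product of disjoint transpositions, the plan is to conjugate $\pi_2$ by a sequence of crossing permutations so as to move one chosen $t_i$ to a far-away transposition $t_i'$ while fixing every other $t_j$ setwise; multiplying the result by $\pi_2$ then cancels all the $t_j$ with $j\ne i$ (they are commuting involutions) and leaves exactly $t_i t_i'$.

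The far transpositions live near position $2n$, and this is what dictates the hypothesis on $m$. The odd-string target $(2n-1,2n+1)$ is precisely the innermost transposition of the top crossing permutation $\pi_{(3n+1)/2}$, whose fixed point is $2n$ and which is available exactly when $m\ge \frac{3n+1}{2}+(n-1)=\frac{5n-1}{2}$; indeed the largest admissible index is $m-n+1=\frac{3n+1}{2}$, so the bound is sharp.

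I would do the even-string pair $(2,n+1)(2n,2n+2)$ first, since it goes through cleanly. Here I relocate the \emph{outer} transposition $(2,n+1)$ to $(2n,2n+2)$ and then multiply by $\pi_2$. The relocation is carried out by first lifting the large endpoint out of the way and then shuttling the small endpoint over the inner block: conjugating successively by $\pi_{n+1},\,\pi_2,\,\pi_{(3n-1)/2},\,\pi_{(3n+1)/2},\,\pi_{n+1}$ carries the transposition through $(2,2n)$, $(n+1,2n)$, $(n+1,2n-2)$, $(n+1,2n+2)$, and finally to $(2n,2n+2)$. Each of these crossing permutations either misses the inner entries (which all lie in $[3,n]$) or preserves them setwise, as $\pi_2$ does; one also checks that at every step the permutation only meets the crossing permutation on the single endpoint being moved, the other transpositions of the crossing permutation falling on currently empty positions. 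Multiplying by $\pi_2$ then cancels the inner block $(3,n)(4,n-1)\cdots$ and yields $(2,n+1)(2n,2n+2)$.

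The odd-string pair $(3,n)(2n-1,2n+1)$ is obtained the same way—relocating the \emph{inner} transposition $(3,n)$ to $(2n-1,2n+1)$ and multiplying by $\pi_2$—and this is the step I expect to be the main obstacle. The large endpoint $n$ sits just below the entry $n+1$ of the kept pair $(2,n+1)$, and the small endpoint $3$ is buried inside the block of kept transpositions $(4,n-1),(5,n-2),\dots$, so that no single crossing permutation can lift $3$ out of $[2,n+1]$ without scrambling that block. The plan is therefore to clear the obstructing endpoints first—pushing $n+1$ up to $2n$ by $\pi_{n+1}$, which vacates position $n$, then raising $n$ to $2n-1$ by $\pi_n$—and only afterwards to lift $3$ over the now-intact inner block using $\pi_2$, which fixes each remaining kept pair setwise. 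The delicate part, and the crux of the whole lemma, is to verify that after these detours the kept transpositions $(2,n+1),(4,n-1),\dots$ have been restored exactly, so that the concluding multiplication by $\pi_2$ cancels them and leaves precisely $(3,n)(2n-1,2n+1)$; throughout one must also confirm that every crossing permutation invoked has index at most $\frac{3n+1}{2}$ and range inside $\{1,\dots,\frac{5n-1}{2}\}$, so the string budget is never exceeded.
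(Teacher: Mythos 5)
There is a genuine gap, and it sits exactly where you predicted it would. Your treatment of the even-string pair $(2,n+1)(2n,2n+2)$ is fine: the chain of conjugations by $\pi_{n+1},\pi_2,\pi_{(3n-1)/2},\pi_{(3n+1)/2},\pi_{n+1}$ does carry $(2,n+1)$ through $(2,2n),(n+1,2n),(n+1,2n-2),(n+1,2n+2)$ to $(2n,2n+2)$ while each conjugator either misses $[3,n]$ entirely or (in the case of $\pi_2$) preserves each inner pair setwise, so multiplying by $\pi_2$ cancels the inner block as claimed. But for the odd-string pair $(3,n)(2n-1,2n+1)$ you commit to relocating the \emph{inner} transposition $(3,n)$ of $\pi_2$ while restoring every other transposition of $\pi_2$ exactly, and you never carry this out. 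The moves you sketch do not close up: after conjugating by $\pi_{n+1}$, $\pi_n$, and then $\pi_2$, the kept outer transposition $(2,n+1)$ has become $(n+1,2n)$, the moving transposition has become $(n,2n-1)$, and no entry has ever reached $2n+1$. The ``delicate part'' you defer is not a routine verification --- it is the entire content of the lemma for this case, and the strategy of restoring the full inner block of $\pi_2$ after detours makes it substantially harder than it needs to be.

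The idea you are missing is to do the cancellation \emph{first} and the repositioning of the small transposition \emph{afterwards}, when only four entries remain in play. The paper starts from $\pi_1$, relocates its outer transposition by moving the large endpoint ($n\mapsto 2n-1$ by $\pi_n$, then $2n-1\mapsto 2n+1$ by $\pi_{(3n+1)/2}$) and then the small endpoint ($1\mapsto n$ by $\pi_1$, which permutes the inner pairs only within themselves, then $n\mapsto 2n-1$ by $\pi_n$), and multiplies by $\pi_1$ to obtain the clean permutation $(1,n)(2n-1,2n+1)$. Only then does it conjugate by $\pi_{(n-1)/2}$ (sending $n\mapsto n-2$) and by $\pi_1$ (sending $1\mapsto n$, $n-2\mapsto 3$) to convert $(1,n)$ into $(3,n)$; since there is no longer an inner block to restore, these last two steps are trivial to check, and $(2n-1,2n+1)$ lies outside the range of both conjugators. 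If you reorganize your argument this way --- relocate an outer transposition, cancel, then adjust the surviving small transposition --- your proof of the even-string pair goes through unchanged and the odd-string pair becomes just as routine.
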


\begin{proof}
First note that over $S_{(5n-1)/2}$ we have the crossing permutations $\pi_1, \dots, \pi_{(3n+1)/2}$.

Recall that multiplying both sides of $\pi$ by $\sigma$, which is equivalent to conjugating by $\sigma$ since $\sigma^{-1} = \sigma$, switches the entries of $\pi$ according to $\sigma$. Recall also that this does not change the cycle type, which is to say it keeps the number of cycles and the length of each cycle constant. We start with some crossing permutation $\pi_i$ and take conjugates by some $\pi_j$ to obtain different permutations with the same cycle type.

We start by obtaining one pair of odd-string transpositions. Take the crossing permutation $\pi_1 = (1,n)(2,n-1)\cdots(\frac{n-1}{2},\frac{n+3}{2})$. Note that in an odd multi-crossing, the central string is fixed; in this case $\pi_1$ keeps the $\frac{n+1}{2}$ string in the same position. We can conjugate by $\pi_n$ to obtain the permutation $(1,2n-1)(2,n-1)\cdots(\frac{n-1}{2},\frac{n+3}{2})$. Then conjugate by $\pi_{\frac{3n+1}{2}}$ to get $(1,2n+1)(2,n-1)\cdots(\frac{n-1}{2},\frac{n+3}{2})$. Thus we have moved the largest entry of the permutation to $2n+1$.

We then perform a similar sequence to move the smallest entry to $2n-1$. This is done by conjugating by $\pi_1$ and then by $\pi_n$. We are now left with the permutation $(2n-1,2n+1)(2,n-1)\cdots(\frac{n-1}{2},\frac{n+3}{2})$. Note that the first conjugation by $\pi_1$ also moves the other entries around, but the pairs stay the same; for example $\pi_1$ switches the entries 2 and $n-1$, but this keeps the cycle $(2,n-1)$ constant. We can then multiply this permutation by $\pi_1$. The transpositions in the middle would cancel, and we are left with $(1,n)(2n-1,2n+1)$.

Now, we can similarly obtain the permutation $(2,n+1)(2n,2n+2)$, a pair of even-string transpositions, by starting with $\pi_2$.

Finally, we can conjugate $(1,n)(2n-1,2n+1)$ by $\pi_{(n-1)/2}$ to obtain $(1,n-2)(2n-1,2n+1)$. We can conjugate the result by $\pi_1$ to obtain $(3,n)(2n-1,2n+1)$.
\end{proof}

Now we show that we can obtain any pair of even-string transpositions, and any pair of odd string transpositions. We have two cases: $n=4k+3$ and $n=4k+1$.

\begin{lemma}
\label{lem:sameParity4k+3}
Let $n = 4k+3$, and let $m \ge 3n+5$. Then we can obtain any pair of even-string transpositions in $S_m$, or any pair of odd-string transpositions in $S_m$, as a product of the $n$-crossing permutations over $S_m$.
\end{lemma}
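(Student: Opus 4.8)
The plan is to mirror the even-$n$ development of Lemmas~\ref{lem:moveMaxPerm}--\ref{lem:sameCycleType}, adapted to odd crossings and specialized to the cycle type of a single pair of transpositions lying in one parity class. The starting data are the all-odd pair $(3,n)(2n-1,2n+1)$ and the all-even pair $(2,n+1)(2n,2n+2)$ produced in Lemma~\ref{lem:sameParityPairs}; the goal is to conjugate one of these by $n$-crossing permutations to reach an arbitrary pair of transpositions of the same parity in $S_m$.

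The key structural observation is that an odd crossing permutation $\pi_j$ sends $j+t \mapsto j+n-1-t$ and fixes its center $j+\frac{n-1}{2}$; since $n-1$ is even, every entry is carried to an entry of the same parity, so conjugation by $\pi_j$ keeps the orbit of an all-odd (resp.\ all-even) pair within the all-odd (resp.\ all-even) pairs, which is exactly the class we are sweeping out. First I would establish the odd analogue of Lemma~\ref{lem:moveMaxPerm}: with $\pi_r$ we increment an entry by $n-1$, and with $\pi_{r+s}\pi_r$ (for $1 \le s \le \frac{n-3}{2}$, chosen to avoid landing on the fixed center) we increment by $2s$. Since all these increments are even, composing them moves any entry to any prescribed same-parity position, provided there are enough strands. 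Iterating from the largest entry downward, as in Lemmas~\ref{lem:moveMax} and~\ref{lem:moveAll}, I can spread the four entries of the starting pair to any four same-parity positions $a_1 < a_2 < a_3 < a_4$ while preserving the original pairing $(a_1,a_2)(a_3,a_4)$.

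It then remains to realize the pairings $(a_1,a_3)(a_2,a_4)$ and $(a_1,a_4)(a_2,a_3)$, for which I would prove an odd analogue of the ``switch two entries'' step in Lemma~\ref{lem:sameCycleType}. Here the central fixed point works in our favor: whereas an even crossing swaps its two middle entries (which are adjacent), an odd crossing $\pi_c$ swaps $c+\frac{n-1}{2}-1$ and $c+\frac{n-1}{2}+1$, that is, two entries differing by $2$ and straddling the fixed center. Pushing two same-parity-adjacent entries out to positions symmetric about some center, applying that crossing, and pulling them back transposes their order; when the two entries lie in different cycles this changes the pairing. Combining the spreading step with this swap yields every pair of transpositions of a given parity.

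I expect the main obstacle to be the bookkeeping forced by the central fixed point, which has no counterpart in the even case: each increment and each swap must be arranged so that no entry is ever carried onto a fixed center, and the same-parity constraint roughly doubles the span the entries must traverse. Getting all of these moves to fit simultaneously is what forces the hypothesis $m \ge 3n+5$ (the analogue of the $N+3n-3$ bound with size $N=4$, plus the extra room the odd construction needs). The residue $n \equiv 3 \pmod 4$ also matters: since $\frac{n-1}{2}=2k+1$ is odd, each $\pi_j$ is an odd permutation and the parity of a center's offset from $j$ differs from the $n=4k+1$ case, so the precise straddling positions used in the swap must be chosen accordingly, which is exactly why this lemma is separated from its $4k+1$ counterpart.
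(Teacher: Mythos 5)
Your starting point (the seed pairs from Lemma~\ref{lem:sameParityPairs}, moved around by conjugation) and your observation that odd crossing permutations preserve the parity of every entry are both exactly what the paper uses. But you stop one step short of the observation that makes the lemma essentially a corollary of earlier work, and in its place you substitute a program that you do not carry out. The paper notes that a $(4k+3)$-crossing permutation $\pi_j$ with $j$ odd, restricted to the odd strings and with the odd strings relabelled $1,2,3,\dots$, \emph{is} a $(2k+2)$-crossing permutation on that relabelled string set -- it reverses the $2k+2$ odd strings $j, j+2, \dots, j+n-1$. Since $2k+2$ is even, Lemma~\ref{lem:sameCycleType} applies verbatim to the relabelled system: a size-$4$ permutation can be conjugated to any permutation of the same cycle type using $4+3(2k+2)-3=6k+7$ strings, and $m\ge 3n+5=2(6k+7)$ supplies exactly $6k+7$ odd (and even) strings. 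No odd analogues of Lemmas~\ref{lem:moveMaxPerm}--\ref{lem:sameCycleType} are needed, and the bound $3n+5$ falls out exactly rather than being ``the extra room the odd construction needs.'' Your proposal instead promises to re-derive all four lemmas in the odd setting; since those re-derivations are the entire mathematical content and you explicitly defer them (``I would establish\dots'', ``I expect the main obstacle to be the bookkeeping\dots''), the proof as written has a genuine gap, even though the program would very likely succeed if executed.

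Two smaller misdiagnoses are worth flagging. First, the fixed center of an odd crossing is not an obstacle to the conjugation calculus: conjugating by $\sigma$ replaces each entry $i$ by $\sigma(i)$, so an entry sitting at the center of $\sigma$ simply does not move; nothing needs to be ``arranged so that no entry is ever carried onto a fixed center.'' Second, the reason this lemma is separated from Lemma~\ref{lem:sameParity4k+1} is not the sign of $\pi_j$ as a permutation. It is that for $n=4k+3$ the even-length reversal ($2k+2$ strings) lives on the \emph{same} parity class as the starting index $j$, so it can reach every string of that class including the first and last; whereas for $n=4k+1$ the even-length reversal ($2k$ strings) lives on the \emph{opposite} parity class, so no crossing permutation acts as a $2k$-crossing touching string $1$ or string $m$ -- that boundary defect is what forces the extra case analysis in Lemma~\ref{lem:sameParity4k+1}.
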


\begin{proof}
First observe that $3n+5 = 3(4k+3)+5 = 12k+14 = 2(6k+7)$, so we have at least $6k+7$ odd and even strings respectively.

By Lemma~\ref{lem:sameParityPairs}, we can obtain the permutations $(3,n)(2n-1,2n+1)$ and $(2,n+1)(2n,2n+2)$ as a product of the $n$-crossing permutations over $S_m$. We want to move the entries of this permutation around so that we can get any pair of odd-string transpositions, or any pair of even-string transpositions. 

If we focus on the odd strings and ignore the even strings, we can see that the $(4k+3)$-crossing permutations starting on odd indices function as $(2k+2)$-crossing permutations on the odd strings. Since $2k+2$ is even, we know by Lemma~\ref{lem:sameCycleType} that if we have $3(2k+2)+1 = 6k+7$ odd strings, then we can obtain any odd-string permutation of the same cycle type by taking conjugations. Therefore we can obtain any pair of odd-string transpositions. Similarly we can obtain any pair of even-string transpositions.
\end{proof}

\begin{lemma}
\label{lem:sameParity4k+1}
Let $n = 4k+1$, and let $m \ge 3n-2$. Then we can obtain any pair of even-string transpositions in $S_m$, or any pair of odd-string transpositions in $S_m$, as a product of the $n$-crossing permutations over $S_m$.
\end{lemma}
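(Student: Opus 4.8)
The plan is to imitate the proof of Lemma~\ref{lem:sameParity4k+3}, isolating one parity class of strings at a time and recognizing that on it the $n$-crossing permutations act as crossing permutations of a \emph{smaller} multi-crossing. Write $n=4k+1$, so that $\pi_j=(j,j+4k)(j+1,j+4k-1)\cdots(j+2k-1,j+2k+1)$ with the centre $j+2k$ fixed, and note that each transposition $(j+i,\,j+4k-i)$ joins two entries of equal parity. Relabel the odd strings $1,3,5,\dots$ as $1,2,3,\dots$ (sending $2a-1$ to $a$) and the even strings likewise. A short computation then shows that on the odd strings the even-indexed $\pi_j$ act as $(2k)$-crossings while the odd-indexed ones act as $(2k+1)$-crossings, and on the even strings these two roles are exchanged. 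Since $2k$ is even, I would feed the induced $(2k)$-crossings into Lemma~\ref{lem:sameCycleType}, starting from the base pairs $(3,n)(2n-1,2n+1)$ and $(2,n+1)(2n,2n+2)$ supplied by Lemma~\ref{lem:sameParityPairs} (each of which, read on the relabeled strings of its own parity, is again a pair of transpositions of size $4$), so as to reach an arbitrary pair of odd-string, respectively even-string, transpositions; Lemma~\ref{lem:levelPosition} then converts the resulting permutations back into genuine $n$-crossings.

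For the counting, $m\ge 3n-2=12k+1$ leaves $6k+1$ odd strings and only $6k$ even strings, while Lemma~\ref{lem:sameCycleType} applied to a size-$4$ permutation with $(2k)$-crossings asks for $3(2k)+1=6k+1$ strings of the class in question. Thus the odd-string pairs look to come out exactly, and the even-string pairs are the pinch point, the asymmetry arising because $12k+1$ is odd and so carries one extra odd position.

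The main obstacle, and the genuine difference from the case $n=4k+3$, is that there the relevant even sub-crossing is the \emph{larger} one, $(2k+2)$, which already acts starting at the first string of its parity class, so the reduction to Lemma~\ref{lem:sameCycleType} is clean; whereas for $n=4k+1$ the even sub-crossing is the \emph{smaller} one, $(2k)$, and one checks that it never moves the first or the last string of its class, since those extreme strings are touched only by the odd $(2k+1)$-crossings. Consequently a verbatim appeal to Lemma~\ref{lem:sameCycleType} is not available, and the hard part will be a hybrid argument: using the $(2k+1)$-crossings to carry the content of the extreme strings inward, shuffling with the $(2k)$-crossings in the range where Lemma~\ref{lem:sameCycleType} governs the bulk, and then carrying everything back out. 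Making this boundary manipulation fit inside exactly $6k$ even strings, rather than conceding a weaker hypothesis such as $m\ge 3n-1$, is the step I expect to demand the most care.
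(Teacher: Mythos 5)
Your setup is the same as the paper's: restrict attention to one parity class, observe that $\pi_j$ acts on the strings of parity opposite to $j$ as a $2k$-crossing permutation, seed the argument with the pairs from Lemma~\ref{lem:sameParityPairs}, and invoke Lemma~\ref{lem:sameCycleType} for the even sub-crossing size $2k$. You have also correctly identified the one genuine obstruction, namely that the induced $2k$-crossings never reach the first or last string of a parity class. But that is exactly where your proposal stops: ``the hard part will be a hybrid argument'' is a description of a proof obligation, not a proof, and the lemma is false without that step (a pair such as $(1,b)(c,d)$ is simply unreachable by conjugation inside the interior strings alone). The paper discharges this concretely: to get an entry equal to $1$, first produce the pair with that entry sitting at $n$ and all entries away from $1$ and $m$, conjugate by the \emph{full} crossing permutation $\pi_1$ (which sends $n$ to $1$), and then repair the remaining three entries by a further application of Lemma~\ref{lem:sameCycleType} on the interior strings of the relevant parity; entries equal to $m$ are handled symmetrically via $\pi_{m-n+1}$, and the case where both $1$ and $m$ occur is done by performing both conjugations. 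Supplying that mechanism --- including the check that the cleanup conjugations do not disturb the entry already parked at $1$ or $m$ --- is the missing content.

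One further remark: your worry about the count being tight is well founded, and in fact points at a slip in the paper itself. The paper computes $3(4k+1)-2$ as $12k+4$ and deduces $6k+1$ interior strings of each parity, but $3n-2=12k+1$, which gives only $6k-1$ interior odd strings and $6k$ interior even strings --- short of the $6k+1=3(2k)+1$ that Lemma~\ref{lem:sameCycleType} requires for a size-$4$ permutation. The method is sound, but the stated bound $m\ge 3n-2$ should really be $m\ge 3n+1$ (or the interior count must be sharpened); so do not exhaust yourself trying to squeeze the boundary manipulation into exactly $6k$ even strings, as that is not what the paper's own argument achieves either.
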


\begin{proof}
First observe that $3n-2 = 3(4k+1)-2 = 12k+4$ strings. This means that we have $12k+2 = 2(6k+1)$ strings that are not the 1st or the $m$th string. Hence we have $6k+1$ odd strings that are not on either end of the braid, and $6k+1$ even strings that are not on either end of the braid.

As in the proof of Lemma~\ref{lem:sameParity4k+3}, we start with some permutation obtained through Lemma~\ref{lem:sameParityPairs}, and move the entries of these permutations around.

Observe that if we consider an $n$-crossing permutation $\pi_i$ which starts on the $i$th strand, the $(4k+1)$-crossing permutations function as $2k$-crossing permutations on the string with parity different from $i$. Note, however, that none of the $(4k+1)$-crossings can function as a $2k$-crossings that acts on the 1st string or the $m$th string.

First suppose we want to obtain the permutation $(a,b)(c,d)$, where the entries are all odd, and none of them are equal to 1 or $m$. We know by Lemma~\ref{lem:sameParityPairs} that we can obtain the permutation $(3,n)(2n-1,2n+1)$. Since $2k$ is even, we know by Lemma~\ref{lem:sameCycleType} that if we have $3(2k)+1 = 6k+1$ odd strings excluding the first and the last strings, then we can take conjugations and obtain $(a,b)(c,d)$.

Now, suppose we want to obtain $(1,b)(c,d)$, where $b,c,d$ are all odd, and none of them are equal to $m$. Then, using Lemma~\ref{lem:sameParityPairs} and Lemma~\ref{lem:sameCycleType} we can first obtain some $(n,b')(c',d')$ where none of the entries are equal to 1 or $m$. Then we conjugate by $\pi_1$ to obtain $(1,b'')(c'',d'')$. Finally, using the same argument as in the proof of Lemma~\ref{lem:sameCycleType}, but only on the odd strings excluding the first and the last string, we can rearrange the remaining entries to obtain $(1,b)(c,d)$.

By symmetry, we can similarly obtain any permutation with an entry equal to $m$ and none equal to 1; we first obtain $(m-n+1,b')(c',d')$, conjugate by $\pi_{m-n+1}$, and then rearrange the remaining entries. If an entry is equal to 1 and another is equal to $m$, then we can first obtain some permutations where the corresponding entries are equal to $n$ and $m-n+1$ respectively, then conjugate by $\pi_1$ and $\pi_{m-n+1}$, and rearrange the remaining two entries.

We can apply the same argument to obtain any pair of even-string transpositions, by starting with $(2,n+1,2n,2n+2)$. We can use the same argument for the case when one of the entries are equal to $m$.
\end{proof}

Now we present a couple of lemmas for when $n=8k+5$ which will be useful in the proof of Theorem~\ref{thm:oddAlexander}.

\begin{lemma}
\label{lem:triple8k+5}
Let $n=8k+5$. Then we can obtain any odd-string triple crossing permutation in $S_{(3n-1)/2}$, by conjugating $(1,3)$ with the $n$-crossing permutations over $S_{(3n-1)/2}$.
\end{lemma}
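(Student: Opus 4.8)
The plan is to restrict attention to the odd strings, where the statement becomes a self-contained question about conjugating a single transposition on $6k+4$ symbols. Writing $m = (3n-1)/2 = 12k+7$, there are $6k+4$ odd strings; relabel the odd string in position $2t-1$ by $t$, so that $t$ ranges over $1, \dots, 6k+4$. Since $(1,3)$ involves only odd strings and every $\pi_i$ preserves the parity of each entry, every conjugate of $(1,3)$ by a product of $n$-crossing permutations is again a transposition of two odd strings; moreover the odd-string triple crossing permutations $(i,i+2)$ are precisely the transpositions of two adjacent odd positions. Thus it suffices to describe how the $\pi_i$ act on the relabeled odd positions and to show that, starting from $\{1,2\}$, we can reach every adjacent pair $\{a,a+1\}$.

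First I would record the induced actions. Over $S_{12k+7}$ we have $\pi_1, \dots, \pi_{4k+3}$. A direct computation shows that for odd $i = 2a-1$ the permutation $\pi_i$ restricts on the odd positions to the reflection (crossing permutation) of the interval $[a, a+4k+2]$, a $(4k+3)$-crossing I will call $P_a$, available for $a = 1, \dots, 2k+2$; and for even $i = 2a$ it restricts to the reflection of $[a+1, a+4k+2]$, a $(4k+2)$-crossing $Q_b$ on $[b,b+4k+1]$ with $b = a+1$, available for $b = 2, \dots, 2k+2$.

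The key gadget is the composition $T_a := P_a Q_a$ for $a = 2, \dots, 2k+2$. Since $P_a(x) = 2a+4k+2-x$ on $[a, a+4k+2]$ and $Q_a(x) = 2a+4k+1-x$ on $[a, a+4k+1]$, one checks directly that $T_a(x) = x+1$ for $x \in [a, a+4k+1]$ and $T_a(a+4k+2) = a$, while $T_a$ fixes everything outside $[a, a+4k+2]$; that is, $T_a$ is the cyclic rotation $(a, a+1, \dots, a+4k+2)$. Conjugating an adjacent pair $\{p, p+1\} \subseteq [a, a+4k+1]$ by $T_a$ therefore yields $\{p+1, p+2\}$, a right shift, and conjugating by $T_a^{-1}$ yields a left shift. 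Ranging over the admissible $a \in \{2, \dots, 2k+2\}$, these shifts act on all adjacent pairs $\{p, p+1\}$ with $2 \le p \le 6k+2$, and I would check that they connect every adjacent pair from $\{2,3\}$ up through $\{6k+3, 6k+4\}$.

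The main obstacle is the left boundary: no $T_a$ can move the pair $\{1,2\}$, since doing so would require $a = 1$, whereas $Q_1$ (equivalently $T_1$) is unavailable. I would resolve this with a single reflection: conjugating $(1,3)$, i.e.\ $\{1,2\}$, by $P_1$ (the reflection of $[1, 4k+3]$, with $P_1(x) = 4k+4-x$) sends it to $\{4k+2, 4k+3\}$, which lies in the shiftable region. From there the rotations $T_a^{\pm 1}$ reach every adjacent pair $\{2,3\}, \dots, \{6k+3, 6k+4\}$, and together with the starting pair $\{1,2\}$ this exhausts all $6k+3$ adjacent pairs, hence all odd-string triple crossing permutations. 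The remaining work is routine: verifying the two restriction computations, the identity $T_a = (a, a+1, \dots, a+4k+2)$, and the elementary bookkeeping that the shift windows cover the stated ranges, together with the degenerate case $k=0$ (where $P_1$ already sends $\{1,2\}$ directly to $\{2,3\}$), which I would check by hand.
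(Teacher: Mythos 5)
Your proof is correct, but it follows a genuinely different route from the paper's. The paper stays in the original string coordinates and gives an explicit short conjugation recipe for each target $(2i-1,2i+1)$: conjugate $(1,3)$ by $\pi_1$ and then by $\pi_{(n+1)/2}$ to reach $(n,n+2)$, then by a single $\pi_{i+1}$ to land on any target with $2i-1\le n$; targets beyond $n$ are reached by first parking the pair at a $(j,j+2)$ with $j\le n$ congruent to the target modulo $n-1$ and then repeatedly applying the jump $(j,j+2)\mapsto(j+n-1,j+n+1)$ given by conjugating by $\pi_{j+2}$ and then $\pi_j$. You instead pass to the induced action on the $6k+4$ odd strings, identify the restrictions of the $\pi_i$ as the reflections $P_a$ and $Q_b$, and assemble the rotation $T_a=(a,a+1,\dots,a+4k+2)$, which moves an adjacent pair by a unit step, with one application of $P_1$ to free $\{1,2\}$ from the left boundary. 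So the paper's long-range mechanism is a jump of $n-1$ plus modular bookkeeping, while yours is a unit shift with a transparent connectivity argument; your version costs an extra layer of setup (the relabeling and the verification that $P_aQ_a$ is a rotation) but eliminates the case analysis and the residue arithmetic, and it makes explicit the structural fact, used elsewhere in the paper for $n=4k+3$, that odd $n$-crossing permutations restrict to smaller crossing permutations on the strings of a fixed parity. Both arguments use only conjugation by products of the available $\pi_i$, so both are legitimate proofs of the lemma as stated.
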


\begin{proof}
First observe that over $S_{(3n-1)/2}$, we have the $n$-crossing permutations $\pi_1, \dots, \pi_{(n+1)/2}$.

Suppose we want to obtain the transposition $(2i-1,2i+1)$. First consider the case when $2i-1 \le n$. We conjugate $(1,3)$ by $\pi_1$ to obtain $(n-2,n)$. We then conjugate by $\pi_{(n+1)/2}$ to obtain $(n+2,n)$. We then conjugate by $\pi_{i+1}$ to obtain $(2i-1,2i+1)$. (To see this, observe that $n$ is $j$ away from $i+n$, the highest entry in $\pi_{i+1}$, and that $2j+1$ is $j$ away from $j+1$, the lowest entry in $\pi_{i+1}$.)

Now consider the case when $n < 2i-1 < \frac{3n-2}{2}$. Then we can move $(1,3)$ to some $(j,j+2)$ such that $j \le n$ and $j$ is $n-1$ away from $2i-1$. We can then conjugate $(j,j+2)$ by $\pi_{j+2}$ and then by $\pi_j$ to obtain $(2i-1,2i+1)$.
\end{proof}

\begin{lemma}
\label{lem:pair8k+5}
Let $n=8k+5$, and let $m \ge 4n-4$. Then we can obtain any pair of an even-string transposition and an odd-string transposition over $S_m$, corresponding to a disjoint pair of triple crossings, as a product of the $n$-crossing permutations over $S_m$.
\end{lemma}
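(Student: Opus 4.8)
The plan is to first manufacture a single permutation consisting of one odd-string transposition and one even-string transposition, and then to slide that pair into the prescribed positions. The repositioning step will use the same-parity pair construction already available to us: since $8k+5 = 4(2k+1)+1$, Lemma~\ref{lem:sameParity4k+1} applies (with parameter $2k+1$ in place of $k$), and $m \ge 4n-4 \ge 3n-2$ gives more than enough strings to invoke it, with slack left over for the auxiliary disjoint transpositions used below.

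The crux — and the reason $n=8k+5$ is singled out rather than a general odd $n\equiv1\pmod4$ — is a parity count for a single crossing permutation. Recall that $\pi_j$ pairs $j+t$ with $j+n-1-t$; because $n$ is odd these two indices have the \emph{same} parity, and the central index $j+\tfrac{n-1}{2}$ is fixed. When $n=8k+5$ there are $\tfrac{n-1}{2}=4k+2$ transpositions, of which exactly $2k+1$ join two odd strings and $2k+1$ join two even strings. Thus $\pi_j$ restricts to an \emph{odd} permutation of the odd strings and an \emph{odd} permutation of the even strings. A mixed pair is likewise odd on each parity class, so the parities are compatible; by contrast, for $n=8k+1$ each $\pi_j$ restricts to an even permutation on each parity class, which is precisely why a mixed pair cannot be obtained there and why the present lemma is stated only for $8k+5$. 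I expect verifying this seed step to be the genuine content; the rest is bookkeeping.

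For the seed (Step 1), I would start with $\pi_1 = \prod_{t=1}^{(n-1)/2}(t,\,n+1-t)$, whose odd part is a set of $2k+1$ disjoint odd-string transpositions and whose even part is a set of $2k+1$ disjoint even-string transpositions. Pairing up the odd-string transpositions and, for each chosen pair $(p,q),(p',q')$, producing the genuinely disjoint pair $(p,q)(p',q')$ by Lemma~\ref{lem:sameParity4k+1} and right-multiplying by it, I cancel two odd-string transpositions at a time (odd-string and even-string transpositions commute, so the even part is untouched at the level of the net permutation, which is all Lemma~\ref{lem:levelPosition} requires). After $k$ such cancellations one odd-string transposition survives; repeating on the even side leaves one even-string transposition. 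The result is a single mixed pair $\mu=(p,q)(r,s)$. (When $k=0$, i.e.\ $n=5$, this is immediate, since $\pi_1=(1,5)(2,4)$ is already mixed.)

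For the repositioning (Step 2), given a target $(2a,2a+2)(2b-1,2b+1)$, I would first replace the odd transposition of $\mu$: using Lemma~\ref{lem:sameParity4k+1} produce the odd pair $(p,q)(2b-1,2b+1)$ and right-multiply, so that $(p,q)(r,s)\cdot(p,q)(2b-1,2b+1)=(2b-1,2b+1)(r,s)$, the even factor being fixed because it commutes with the odd transpositions. Then replace $(r,s)$ by $(2a,2a+2)$ in the same way using an even pair, arriving at $(2a,2a+2)(2b-1,2b+1)$. The only subtlety is that Lemma~\ref{lem:sameParity4k+1} produces \emph{disjoint} pairs, so if the current and target transpositions of a given parity overlap I would route through an intermediate transposition of that parity disjoint from both; the ample supply of strings guaranteed by $m\ge 4n-4$ makes such an intermediate available. (Alternatively, the odd transposition could be carried into place by conjugation as in Lemma~\ref{lem:triple8k+5}, but the multiplicative argument above avoids having to decouple the simultaneous motion of the even factor under conjugation, so I would prefer it.)
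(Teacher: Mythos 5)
Your proposal is correct, and its first half coincides with the paper's: both extract a single mixed pair by starting from one crossing permutation (you use $\pi_1$, the paper uses $\pi_{(n+1)/2}$), observing that for $n=8k+5$ it contains $2k+1$ transpositions of each parity class, and cancelling same-parity transpositions two at a time by multiplying with pairs supplied by Lemma~\ref{lem:sameParity4k+1}. Where you genuinely diverge is the repositioning step. The paper carries the surviving pair into place by \emph{conjugation}: it first ``separates'' the two transpositions through an explicit chain of conjugations down to $(1,3)(4,6)$, then relocates each factor with a three-way case analysis on where $l_a,l_b$ sit relative to $\tfrac{3n-1}{2}$, invoking the auxiliary Lemma~\ref{lem:triple8k+5} and checking at each stage that moving one factor does not disturb the other. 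You instead reposition by \emph{right-multiplication}: to swap the odd factor $(p,q)$ for the target, multiply by the same-parity pair $(p,q)(\text{target})$ from Lemma~\ref{lem:sameParity4k+1}, routing through an intermediate transposition when the two overlap; then do the same on the even side. Since disjoint transpositions commute, each multiplication exchanges exactly one factor and the cycle-type bookkeeping is trivial. This buys a substantially shorter argument --- no separation chain, no Lemma~\ref{lem:triple8k+5}, no interference analysis --- and in fact needs only $m\ge 3n-2$ rather than $4n-4$. The one thing worth making explicit is that you are reading Lemma~\ref{lem:sameParity4k+1} as supplying arbitrary disjoint same-parity pairs $(a,b)(c,d)$, not merely pairs of adjacent-index transpositions $(i,i+2)(j,j+2)$; that stronger reading is what its proof establishes and is exactly how the paper itself uses it inside this lemma's proof, so your reliance on it is legitimate. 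Your parity explanation of why the construction succeeds for $8k+5$ and must fail for $8k+1$ (each $\pi_j$ restricts to an odd permutation of each parity class in the former case and an even one in the latter) matches the dichotomy the paper draws in the proof of Theorem~\ref{thm:oddAlexander}.
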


\begin{proof}
Observe that when $n=8k+5$, an $n$-crossing permutation consists of $2k+1$ even-string transpositions and $2k+1$ odd-string transpositions. We now show that we can obtain any pair consisting of one odd-string transposition and one even-string transposition whose corresponding crossings are disjoint.

Let $(l_a, l_a+2)(l_b, l_b+2)$ be the permutation we want to obtain, with $l_a+2 < l_b$. First suppose $l_a$ is odd. We can then start $\pi_{(n+1)/2}$, and cancel all but one of the even-string transpositions by multiplying it with pairs of even-string transpositions as before. We can similarly cancel all but one of the odd-string transpositions. Thus we are left with a single even-string transposition and a single odd-string transposition. We may choose the pairs to cancel from the outside so that we are left with a permutation $(n-2,n+2)(n-1,n+1)$.

We first ``separate'' the two transpositions. We can conjugate $(n-2,n+2)(n-1,n+1)$ by $\pi_{n-1}$ to obtain $(n-2,2n-5)(2n-2,2n-4)$. When $n=5$ this is equivalent to $(n-2,n)(n+3,n+1)$.

When $n>5$, we conjugate $(n-2,2n-5)(2n-2,2n-4)$ by $\pi_{2n-4}$ to obtain $(n-2,2n-5)(3n-7,3n-5)$. Then we conjugate by $\pi_{(3n-7)/2}$ to obtain $(n-2,2n-3)(3n-7,3n-5)$. Then we conjugate by $\pi_{n-1}$ again to obtain $(n-2,n)(3n-7,3n-5)$. Then we conjugate by $\pi_{2n-3}$ to obtain $(n-2,n)(2n,2n-2)$. Finally, we conjugate by $\pi_{n+1}$ to obtain $(n-2,n)(n+1,n+3)$.

In both cases, we have the permutation $(n-2,n)(n+1,n+3)$. Now we can conjugate by $\pi_1$, and then by $\pi_4$ to obtain $(1,3)(4,6)$. Consider the $4n-8$ strings starting with the 4th string. Then, as in the proof of Lemma~\ref{lem:sameParity4k+1}, we can move the entries of $(4,6)$ to any even numbers between 4 and $4n-5$. Thus we can obtain the permutation $(1,3)(l_b,l_b+2)$. Now we want to move $(1,3)$ to $(l_a, l_a+2)$.

First consider the case when $l_b \le \frac{3n-1}{2}$.
We conjugate $(1,3)(l_b,l_b+2)$ by $\pi_{l_b}$ to obtain $(1,3)(l_b+n-1,l_b+n-3)$. We then conjugate by $\pi_{l_b+n-3}$ to obtain $(1,3)(l_b+2n-6, l_b+2n-4)$. Now by Lemma~\ref{lem:triple8k+5} we can conjugate by $n$-crossing permutations over $S_{(3n-1)/2}$ to move $(1,3)$ to $(l_a,l_a+2)$. Since $l_b+2n-6 > \frac{3n-1}{2}$ the other permutation is not affected, and we are left with $(l_a,l_a+2)(l_b+2n-6, l_b+2n-4)$ We can then conjugate back by $\pi_{l_b+n-3}$ and then by $\pi_{l_b}$ to obtain $(l_a, l_a+2)(l_b, l_b+2)$.

Next, consider the case when $l_a + 2 \le \frac{3n-1}{2} < l_b$. Then by Lemma~\ref{lem:triple8k+5} we can move $(1,3)$ to $(l_a,l_a+2)$ without affecting any other elements.

Finally consider the case when $l_a + 2 > \frac{3n-1}{2}$. Then by Lemma~\ref{lem:triple8k+5} we can move $(1,3)$ to some $(j,j+2)$, where $j < \frac{3n-1}{2}$ and $j$ is a multiple of $n-1$ away from $l_a$. Then we can conjugate by $\pi_{j+2}$ and then by $\pi_j$ to obtain $(j+n-1,j+n+1)$. We can continue this until we reach $(l_a,l_a+2)$.

If $l_a$ is even, we start with $\pi_{(n+3)/2}$, and similarly take conjugations to obtain the permutation $(2,4)(5,7)$. We can obtain $(2,4)(l_b,l_b+2)$ using the $4n-8$ strings starting with the 5th string. Moving $(2,4)$ to $(l_a,l_a+2)$ can be done in the same way as above.
\end{proof}

\subsection{Proof of Theorem~\ref{thm:oddAlexander}}

Now we use these pairs of transpositions to produce the desired braid.

\begin{proof}[Proof of Theorem~\ref{thm:oddAlexander}]
If $n$ is even, the result follows directly from Theorem~\ref{thm:evenAlexander}.

If $n$ is odd, by Theorem~\ref{thm:oddAlexander} we can put the link $L$ in a triple crossing braid form. We consider three cases for when $n$ is odd: $n=4k+3$, $n=8k+5$, and $n=8k+1$.

\noindent
\textbf{Case 1: $n = 4k+3$.}
Take 3-stabilizations until we have at least $3n+5$ strings. It suffices to show that we can obtain any triple-crossing permutation, for then by Lemma~\ref{lem:levelPosition} we can produce any triple-crossing.

Observe that when $n=4k+3$, an $n$-crossing permutation consists of $k$ even-string transpositions and $k+1$ odd-string transpositions, or $k$ odd-string transpositions and $k+1$ even-string transpositions.

Recall that by Lemma~\ref{lem:sameParity4k+3}, we can obtain pairs of odd-string transpositions, and pairs of even-string transpositions, as products of the $n$-crossing permutations. We can obtain one odd-string transposition as follows: start with $\pi_1 = (1,n)(2,n-1)\cdots(\frac{n-1}{2},\frac{n+3}{2})$, which has an odd number of odd-string transpositions and an even number of even-string transpositions. Then we can cancel the even number of even-string transpositions, and all but one of the odd-string transpositions, by multiplying it with pairs of odd-string transpositions and pairs of even-string transpositions. We are left with a single odd-string transposition.

As before, we can focus on the odd strings and see that we can obtain any permutation of the same cycle type by Lemma~\ref{lem:sameCycleType}, since the $(4k+3)$-crossing permutations starting on odd indices function as $(2k+2)$-crossing permutations on the odd strings. Therefore we can obtain any odd-string transposition. We can similarly obtain any even-string transposition. This shows that we can produce any triple-crossing as a sequence of $n$-crossings, so $L$ can be put into an $n$-crossing braid.

\noindent
\textbf{Case 2: $n=8k+5$.} First observe that an $(8k+5)$-crossing permutation is an even permutation (here we mean even in the traditional sense of the word in symmetric groups, namely that it can be written as a product of an even number of transpositions.) Therefore we want to start with a triple-crossing braid with an even number of crossings, and then turn pairs of triple-crossings into sets of $n$-crossings.

Take 3-stabilizations of $L$ until we have at least $3n-2$ strings. If this braid has an odd number of triple crossings at this stage, take one more 3-stabilization so that we have an even number of triple crossings.

We claim we can put these triple-crossings in pairs such that each pair is disjoint, in the sense that they involve 6 distinct strings in total. This is because if two consecutive crossings involve a common strand, then we can put one of them in a pair with a dummy crossing far away, and put the other in another pair which undoes the dummy crossing (Fig.~\ref{fig:disjointPair}).

\begin{figure}[ht]
	\centering
	\includegraphics[scale=0.2]{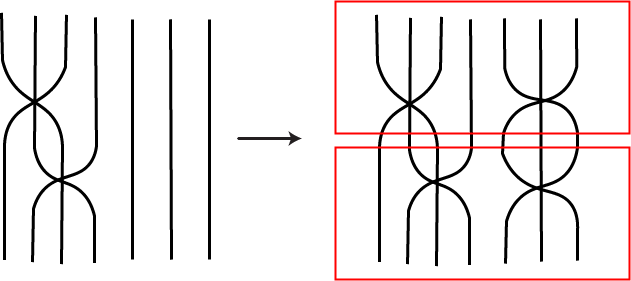}
	\caption{If two consecutive crossings share a common strand, we can put them in two disjoint pairs along with two dummy crossings.}
	\label{fig:disjointPair}
\end{figure}

Now it suffices to show that we can obtain any permutation corresponding to a disjoint pair of triple-crossings. By Lemma~\ref{lem:sameParity4k+1}, we can obtain pairs of transpositions of the same parity. By Lemma~\ref{lem:pair8k+5}, we can obtain any pair of an even-string transposition and an odd-string transposition, corresponding to a disjoint pair of triple crossings. Thus by Lemma~\ref{lem:levelPosition} we can produce any pair of disjoint triple-crossings using $n$-crossings. Hence $L$ can be put into a $n$-crossing braid.

\noindent
\textbf{Case 3: $n=8k+1$.} First observe that a $(8k+1)$-crossing permutation consists of $2k$ even-string transpositions and $2k$ odd-string transpositions. Therefore we want to start with a triple-crossing braid with an even number of even-string crossings, and an even number of odd-string crossings.

Take 3-stabilizations of $L$ until we have at least $3n-2$ strings. If the number of even-string crossings and odd-string crossings are both even at this stage, then it is in the desired form. If both numbers are odd, then we can take two 3-stabilizations, which will increase both numbers by one.

The remaining case is when the number of one set of crossings is odd, and the number of the other set of crossings is even. In this case, we consider two possibilities. First suppose the number of strings is even. Then note that a 3-stabilization on the second string will increase the number of even-string crossings, and a 3-stabilization on the second last string will increase the number of odd-string crossings (Fig.~\ref{fig:3-stabilization2}). Hence if the parity is off by one, we can always find the appropriate 3-stabilization.

\begin{figure}[ht]
	\centering
	\includegraphics[scale=0.3]{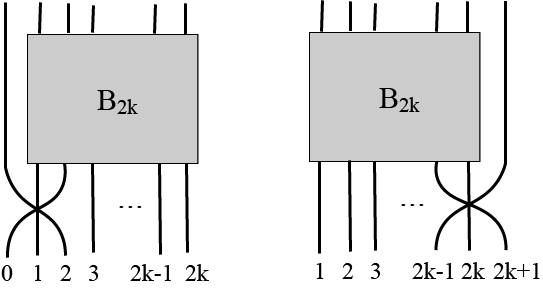}
	\caption{A 3-stabilization on the second string will increase the number of even-string crossings, and a 3-stabilization on the second last string will increase the number of odd-string crossings.}
	\label{fig:3-stabilization2}
\end{figure}

Now suppose the number of strings is odd. Then both 3-stabilizations increase the number of even-string crossings. But once we take this stabilization, the number of even-string crossings and the number of odd-string crossings will have the same parity. We can therefore put it in the desired form with at most two more 3-stabilizations.

Hence we can put $L$ into a triple-crossing braid with an even number of even-string crossings, and an even number of odd-string crossings. As before we can put these crossings into pairs that are disjoint. Note that here we must also put them in pairs such that they have the same parity. We can then produce each pair of crossings since we can obtain pairs of transpositions with the same parity by Lemma~\ref{lem:sameParity4k+1}. Therefore we can produce each pair of triple-crossings with the same parity using the $n$-crossings, so $L$ can be put into a $n$-crossing braid form.
\end{proof}

\section{Bounds on Braid Indices}
\label{sec:braidIndices}

In this section we find relationships between the multi-crossing braid indices. Many arguments in this section have been simplified; details can be found in the original thesis.

Let $\beta_n(L)$ be the minimum number of strings necessary to represent the link $L$ as an $n$-crossing braid. We define $\beta_n(L) = \infty$ if $L$ cannot be represented as an $n$-crossing braid. Note this happens if and only if $L$ is a knot and $n$ is odd. A simple observation tells us the following.

\begin{theorem}
Let $L$ be a link. For any $n \ge 2$, we have
\[
\beta_2(L) \le \beta_n(L).
\]
\end{theorem}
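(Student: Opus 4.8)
The plan is to show that any $n$-crossing braid representation of $L$ can be converted into a double crossing braid using the same number of strings, which immediately gives $\beta_2(L) \le \beta_n(L)$. The key observation is that an $n$-crossing is itself just a tangle of double crossings: at any $n$-crossing the $n$ strands meet at a point with prescribed over/under levels, and this local picture can be resolved into a small braid word made entirely of double crossings among those same $n$ strands, without recruiting any new strings. Since this resolution is purely local and stays within the strands already present, the total number of strings in the braid is unchanged.

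First I would take an $n$-crossing braid $\beta$ realizing $L$ on $m = \beta_n(L)$ strings (if $\beta_n(L) = \infty$ there is nothing to prove, since the inequality holds trivially). Next I would replace each $n$-crossing in $\beta$, one at a time, by an equivalent sequence of double crossings. Concretely, an $n$-crossing with understrand-to-overstrand level data corresponds to the permutation $\pi_j = (j, j+n-1)(j+1, j+n-2)\cdots$, and this permutation can be written as a product of adjacent transpositions $(i,i+1)$, each of which is realized by a double crossing $\sigma_i^{\pm 1}$ on the appropriate strands; the signs are chosen so that the over/under levels at the multi-crossing are reproduced. This is exactly the kind of local untangling already exploited in the level-position arguments of Lemma~\ref{lem:levelPosition}. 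After performing this substitution at every multi-crossing, the resulting braid is a genuine double crossing braid on the same $m$ strings whose closure is isotopic to the original closed $n$-crossing braid, hence to $L$.

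Finally, the existence of this double crossing braid on $m$ strings shows $\beta_2(L) \le m = \beta_n(L)$, which is the desired inequality. I would also note the degenerate case: when $L$ is a knot and $n$ is odd we have $\beta_n(L) = \infty$ by the parity obstruction noted in the text, so the inequality $\beta_2(L) \le \infty$ is immediate and the substitution argument only needs to be carried out when $\beta_n(L)$ is finite.

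I expect the only subtlety — it is hardly an obstacle — to lie in checking that resolving each $n$-crossing into double crossings genuinely requires no additional strings and produces a valid braid (all crossings remaining monotone/descending so that the result is a braid rather than a general tangle). Because each $n$-crossing already sits inside an ordinary braid with a well-defined left-to-right ordering of its $n$ strands, the adjacent-transposition resolution keeps every strand moving monotonically downward and stays within those $n$ positions, so no stabilization is forced. This is the same principle used throughout Section~\ref{sec:evenAlexander}, and the ``simple observation'' alluded to in the statement is precisely that a multi-crossing is a bounded double crossing tangle on its own strands.
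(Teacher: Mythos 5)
Your proposal is correct and is essentially the paper's own argument: the paper likewise proves this by observing that each multi-crossing can be broken up locally into double crossings on the same strands, yielding a double crossing braid with the same number of strings. Your additional remarks about the $\beta_n(L)=\infty$ case and the monotonicity of the local resolution are fine but not needed beyond the one-line observation.
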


\begin{proof}
Observe that any multi-crossing braid can be turned into a double crossing braid with the same number of strings, by breaking up each multi-crossing into double crossings.
\end{proof}

\subsection{Even multi-crossing braids}

For even $n$ we have the following results.

\begin{theorem}
\label{thm:braidEven}
Let $L$ be a link that is not an unlink. Let $n \le 202$, and $m \ge n+2$.
\begin{itemize}
\setlength\itemsep{0.3em}

\item[(i)]
If $n=4k+2$, then we have $\beta_n(L) \le \beta_m(L)$.

\item[(ii)]
If $n=4k$, then we have $\beta_n(L) \le \beta_m(L) + 1$.

Moreover, if $m=4k$ or $m=4k+1$, then we have $\beta_n(L) \le \beta_m(L)$.
\end{itemize}
\end{theorem}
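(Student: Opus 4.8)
The plan is to start from a braid that realizes $\beta_m(L)$, built entirely from $m$-crossings on $\beta_m(L)$ strands, and to rebuild each individual $m$-crossing out of $n$-crossings on the same strands, allowing one extra strand only in the cases where the theorem permits a $+1$. By Lemma~\ref{lem:levelPosition} applied with $s=m$, it suffices to express each $m$-crossing permutation $\pi_j$ as a product of $n$-crossing permutations over $S_{\beta_m(L)}$ (respectively $S_{\beta_m(L)+1}$); the geometric crossing can then be recovered by assigning levels. Thus the whole problem is converted into a question about which permutations lie in the subgroup generated by the $n$-crossing permutations.

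The conceptual heart is a parity count. An $m$-crossing permutation is a product of $\lfloor m/2\rfloor$ transpositions, so it is an even permutation exactly when $m\equiv 0,1\pmod 4$ and an odd permutation when $m\equiv 2,3\pmod 4$. On the other hand an $n$-crossing permutation is a product of $n/2$ transpositions, so for $n=4k+2$ it is odd while for $n=4k$ it is even. When $n=4k+2$ the generators are odd involutions, and since the proof of Theorem~\ref{thm:evenAlexander} already produces a single transposition, I would argue that they generate all of $S_M$ once $M\ge n+2$; every $m$-crossing permutation then lies in this group and can be realized on $\beta_m(L)$ strands, giving part~(i) with no extra strand. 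When $n=4k$ the generators are even, so they lie in $A_M$, and I would show the subgroup they generate is exactly $A_M$ for $M\ge n+2$, using the disjoint double-transposition building blocks of the $n=4k$ case of Theorem~\ref{thm:evenAlexander}. Every \emph{even} $m$-crossing permutation is then realizable directly, which is precisely the range $m\equiv 0,1\pmod 4$ and yields the ``moreover'' clause $\beta_n(L)\le\beta_m(L)$.

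It remains to treat $n=4k$ together with an odd $m$-crossing permutation, i.e.\ $m\equiv 2,3\pmod 4$, where a single such permutation cannot be produced by even generators. Here I would introduce one additional strand by a stabilization, which preserves $L$ and accounts for the $+1$, pair up the $m$-crossings of the braid from the top down, and realize each consecutive pair at once: the product of two odd permutations is even and so lies in $A_{M+1}$, while the extra strand is used to park a compensating transposition between the two members of a pair, exactly as the dummy crossings are used in the $n=4k$ case of Theorem~\ref{thm:evenAlexander}. Since a stabilization also reverses the parity of the number of $m$-crossings, the pairing can always be completed; this is what forces the weaker bound $\beta_n(L)\le\beta_m(L)+1$ in general, while the even-$\pi_j$ range $m\equiv 0,1\pmod 4$ escapes it.

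The hard part will be the strand budget rather than the parity bookkeeping. Lemma~\ref{lem:sameCycleType} only guarantees that a permutation of size $d$ can be reached by conjugation once there are $d+3n-3$ strands, whereas $\beta_m(L)$ can be as small as $m$ and the target $\pi_j$ has size $m$. To place $\pi_j$ inside $\beta_m(L)$ (or $\beta_m(L)+1$) strands I must use the sharpened statement announced after Lemma~\ref{lem:sameCycleType}, namely that $n+2$ strands already suffice, so that the $n$-crossing permutations generate $S_M$ or $A_M$ for \emph{every} $M\ge n+2$. Establishing this tight generation is exactly the step that has only been confirmed by direct computation, and it is the source of the restriction $n\le 202$ in the hypothesis.
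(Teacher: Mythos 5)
Your strategy differs from the paper's in one essential respect: you realize each $m$-crossing \emph{permutation} directly as a product of $n$-crossing permutations and invoke Lemma~\ref{lem:levelPosition} with $s=m$, whereas the paper first perturbs each $m$-crossing into its $\binom{m}{2}$ constituent double crossings and only then rebuilds those double crossings (or disjoint pairs of them) out of $n$-crossings, i.e.\ it applies Lemma~\ref{lem:levelPosition} with $s=2$ exactly as in the proof of Theorem~\ref{thm:evenAlexander}. Your route is fine for part (i) and for the ``moreover'' clause of part (ii): a single $m$-crossing is trivially a disjoint sequence, the sign of $\pi_j$ is $(-1)^{\lfloor m/2\rfloor}$, which is $+1$ precisely when $m\equiv 0,1\pmod 4$ (consistent with the paper's count of $\binom{m}{2}$ double crossings), and the extension of the computational generation fact from $S_{n+2}$ to $S_M$ for all $M\ge n+2$ is the same mild windowing step both arguments need.

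The gap is in the main inequality of part (ii). Your plan for $m\equiv 2,3\pmod 4$ is to realize each $m$-crossing together with a compensating transposition parked on the single extra strand. But Lemma~\ref{lem:levelPosition} requires the target crossings to be \emph{disjoint}: no strand may be switched by more than one crossing in the unit. An $m$-crossing together with a disjoint double crossing needs $m+2$ distinct strands, while you have only $\beta_m(L)+1$, and $\beta_m(L)$ can equal $m$; then the dummy transposition necessarily reuses a strand of the $m$-crossing and the lemma does not apply. (Realizing two consecutive $m$-crossings ``at once'' fails for the same reason: with so few strands they must share strands.) Your parity repair is also off: a stabilization adds a double crossing, not an $m$-crossing, so it does not change the parity of the number of $m$-crossings. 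The paper sidesteps all of this by dropping to the level of double crossings, where a disjoint pair needs only four strands; a single stabilization then fixes the parity of the number of double crossings, and that stabilization is what actually costs the $+1$. To repair your argument you would either have to fall back on that decomposition or pay two extra strands, which only yields $\beta_n(L)\le\beta_m(L)+2$.
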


\begin{proof}
\text{}
\begin{itemize}
\setlength\itemsep{0.3em}

\item[(i)]
We can show through computations in Mathematica that for all $n\le 202$ with $n=4k+2$, the $n$-crossing permutations over $S_{n+2}$ generate $S_{n+2}$. (See the original thesis for the code.) By Lemma~\ref{lem:levelPosition}, this means that we can obtain every double crossing as a product of $n$-crossings if we have $n+2$ strings.

Let $m \ge n+2$. Since an $m$-crossing requires $m$ strings, and $L$ is not an unlink, we have $\beta_m(L) \ge n+2$. We can decompose an $m$-crossing braid into a double crossing braid with the same number of strings. Then each double crossing can be turned into a product of $n$-crossings, so we have an $n$-crossing braid with $\beta_m(L)$ strings. This means that there is no need for the extra stabilization, so we have $\beta_n(L) \le \beta_m(L)$ for all such $m \ge n$.

\item[(ii)]
We can again show with Mathematica that for all such $n \le 200$, the $n$-crossing permutations over $S_{n+2}$ generate $A_{n+2}$, the alternating group on $n$ elements. The first inequality follows as in (i), except in this case we may have to take a stabilization to ensure that the number of double crossings is even.

Observe that if $m = 4k$ or $m=4k+1$, then an $m$-crossing breaks down into an even number of double crossings. Thus the double crossing braid must have an even number of crossings. Therefore $\beta_n(L) \le \beta_m(L)$ for all such $m \ge n$.
\end{itemize}
\end{proof}

Hence we have the following relationships:
\begin{align*}
\beta_2&(L) \le \beta_6(L) \le \beta_{10}(L) \le \cdots \le \beta_{202}(L) \le \beta_{206}(L) \\
&\updownle \hspace{25pt} \updownle \hspace{32pt} \updownle \hspace{55pt} \updownle \\
\beta_4&(L) \le \beta_8(L) \le \beta_{12}(L) \le \cdots \le \beta_{204}(L).
\end{align*}
And the following:
\[
\beta_4(L) \le \beta_6(L) + 1, \hspace{5pt} \beta_8(L) \le \beta_{10}(L) + 1, \hspace{5pt} \dots, \hspace{5pt} \beta_{200}(L) \le \beta_{202}(L) + 1.
\]

Note that the inequalities need not end at 202; this is an arbitrary choice on how far to go with the Mathematica computation. It would however be interesting to ask whether there is a general argument that could extend the inequalities to all even $n$.

\begin{example}
The inequality $\beta_{4k} \le \beta_{4k+2} + 1$ is strict for any link $L$ with the property that $\beta_2(L) \ge 4k+4$, and the number of crossings when it realizes the double crossing braid index is odd. We know this because Markov moves (\cite{markov}) preserve the parity of $b+c$, where $b$ is the number of strings and $c$ is the number of crossings. A conjugation does not alter either $b$ or $c$, and a stabilization increases both $b$ and $c$ by 1. This means that given such a link, it cannot be represented as any double crossing braid with $\beta_2(L)$ strings and an even number of double crossings. In order to turn it into a $4k$-crossing braid we must therefore take a stabilization to make the number of crossings even, so $\beta_{4k}(L) = \beta_2(L) + 1$. We can of course realize $L$ as a $(4k+2)$-crossing braid with $\beta_2(L)$ strings, so $\beta_{4k+2}(L) = \beta_2(L) = \beta_{4k}(L) - 1$. An artificial example of such a link is a split link consisting of the trefoil knot and an unlink with $4k+2$ components.
\end{example}

We also have inequalities that hold for infinitely many even $n$.

\begin{theorem}
\label{thm:braidEvenAll}
Let $L$ be a link that is not an unlink.
\begin{itemize}
\setlength\itemsep{0.3em}

\item[(i)]
Let $n=4k+2$. Then for any $m \ge 3n+1$, we have $\beta_n(L) \le \beta_m(L)$.

\item[(ii)]
Let $n=4k$. Then for any $m \ge 3n+1$, we have $\beta_n(L) \le \beta_m(L) + 1$.

Moreover, if $m=4k$ or $m=4k+1$, then we have $\beta_n(L) \le \beta_m(L)$.
\end{itemize}
\end{theorem}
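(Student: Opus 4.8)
The plan is to reprise the argument of Theorem~\ref{thm:braidEven}, replacing the Mathematica-verified generation fact (valid only for $n \le 202$) with the generation statement that is already built into the proof of Theorem~\ref{thm:evenAlexander} and holds for \emph{all} even $n$. Recall that in proving Theorem~\ref{thm:evenAlexander} we showed, via Lemma~\ref{lem:evenPair} and Lemma~\ref{lem:sameCycleType}, that once a braid has at least $3n+1$ strands the $n$-crossing permutations produce any transposition $(i,i+1)$ when $n=4k+2$, and any disjoint pair of transpositions $(a,a+1)(b,b+1)$ when $n=4k$. (Indeed, Lemma~\ref{lem:sameCycleType} applied to a permutation of size $4$ requires exactly $4+3n-3 = 3n+1$ strands, which is where the bound comes from.) This is precisely the generation fact we need, now available for every even $n$, at the cost of raising the string count from $n+2$ to $3n+1$.

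For part (i), fix $n=4k+2$ and $m \ge 3n+1$. If $\beta_m(L)=\infty$ the inequality is trivial, so assume $L$ admits an $m$-crossing braid. Since $L$ is not an unlink, any such braid contains at least one $m$-crossing and hence at least $m$ strands, so $\beta_m(L) \ge m \ge 3n+1$. Take an $m$-crossing braid realizing $\beta_m(L)$, break every $m$-crossing into double crossings without changing the number of strands, and then rebuild each double crossing as a product of $n$-crossings; this is possible because the braid has at least $3n+1$ strands. By Lemma~\ref{lem:levelPosition} these products can be realized as genuine $n$-crossings, yielding an $n$-crossing braid on $\beta_m(L)$ strands, so $\beta_n(L) \le \beta_m(L)$.

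For part (ii), fix $n=4k$ and $m \ge 3n+1$, and argue as above with the one modification forced by the $n$-crossing permutations now being even: we can only produce \emph{disjoint pairs} of double crossings, so the double crossing braid obtained from breaking up the $m$-crossings must have an even number of crossings. If it is odd, a single stabilization makes it even at the cost of one extra strand, giving $\beta_n(L) \le \beta_m(L)+1$. For the ``Moreover'' clause, note that resolving one $m$-crossing into double crossings produces the half-twist on $m$ strands, i.e.\ $\binom{m}{2}=m(m-1)/2$ double crossings; this count is even exactly when $m \equiv 0,1 \pmod 4$. Hence when $m=4k$ or $m=4k+1$ the total number of double crossings is a sum of even numbers, already even, so no stabilization is needed and $\beta_n(L) \le \beta_m(L)$.

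The decomposition of an $m$-crossing into double crossings and its reassembly into $n$-crossings are routine, being handled by Lemma~\ref{lem:levelPosition} together with the generation fact from Theorem~\ref{thm:evenAlexander}. The only point requiring genuine care is the parity bookkeeping in part (ii): I must confirm that $\binom{m}{2}$ is even precisely for $m \equiv 0,1 \pmod 4$ and that summing over all the $m$-crossings in the braid preserves this evenness, which is exactly what allows the ``Moreover'' statement to drop the $+1$.
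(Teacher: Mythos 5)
Your proposal is correct and takes essentially the same route as the paper: the paper's own proof of Theorem~\ref{thm:braidEvenAll} simply observes that the proof of Theorem~\ref{thm:evenAlexander} needs only $3n+1$ strands, which an $m$-crossing braid with $m \ge 3n+1$ automatically supplies, and notes that an $m$-crossing resolves into $\binom{m}{2}$ double crossings, which is even exactly when $m \equiv 0,1 \pmod 4$. You have merely spelled out the same reduction (and the same parity bookkeeping for part (ii)) in more detail than the paper does.
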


\begin{proof}
(i) The proof of Theorem~\ref{thm:evenAlexander} required at least $3n+1$ strings. This means that for $m \ge 3n+1$, we can turn an $m$-crossing braid into an $n$-crossing braid with the same number of strings, so $\beta_n(L) \le \beta_m(L)$. (ii) can be proved similarly.
\end{proof}

\begin{corollary}
Let $L$ be a link that is not an unlink. Then for all even $n$,
\[
\beta_n(L) \le \beta_{3n+1}(L).
\]
\end{corollary}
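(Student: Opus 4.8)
The plan is to obtain the corollary as an immediate specialization of Theorem~\ref{thm:braidEvenAll}, setting $m = 3n+1$. Since $m = 3n+1$ trivially satisfies $m \ge 3n+1$, the only content is to check which part of that theorem applies for each residue of $n$ modulo $4$, and in particular to confirm that $3n+1$ lands in the residue class that yields the inequality without an additive $+1$.

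First I would dispose of the case $n = 4k+2$. Here Theorem~\ref{thm:braidEvenAll}(i) applies verbatim with $m = 3n+1$, giving $\beta_n(L) \le \beta_{3n+1}(L)$ with no further checking required, since part (i) imposes no parity condition on $m$.

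The case $n = 4k$ is where the one subtlety sits. Theorem~\ref{thm:braidEvenAll}(ii) a priori only gives $\beta_n(L) \le \beta_m(L) + 1$, so I must invoke its ``Moreover'' clause to eliminate the $+1$. That clause requires $m$ to have the form $4j$ or $4j+1$. Computing $3n+1 = 3(4k)+1 = 12k+1 \equiv 1 \pmod 4$ shows $m$ is of the form $4j+1$, so the sharper bound $\beta_n(L) \le \beta_{3n+1}(L)$ indeed applies.

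I do not anticipate a genuine obstacle: everything reduces to an elementary parity check on $3n+1$. The only point worth recording is that $3n+1$ is odd for every even $n$ (it equals $12k+7$ or $12k+1$), so when $L$ is a knot one has $\beta_{3n+1}(L) = \infty$ and the inequality holds vacuously; for links with at least two components the two cases above cover everything, and combining them completes the proof.
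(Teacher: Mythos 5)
Your proposal is correct and matches the paper's proof: both simply specialize Theorem~\ref{thm:braidEvenAll} at $m=3n+1$, using part (i) when $n=4k+2$ and the ``Moreover'' clause of part (ii) when $n=4k$ after checking the residue of $3n+1$ (the paper records it as $8k'+1$ or $8k'+5$, you as $12k+1\equiv 1\pmod 4$; these are the same observation). Your added remark that $3n+1$ is odd, so the inequality is vacuous for knots, is a harmless extra.
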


\begin{proof}
After noting that when $n=4k$, $3n+1$ can be written in as $8k'+1$ or $8k'+5$ for some $k'$, the result follows directly from Theorem~\ref{thm:braidEvenAll}.
\end{proof}

\subsection{Odd multi-crossing braids}

For links with two or more components, we can consider braid indices for odd $n$. First consider the case when $n=3$.

\begin{theorem}
\label{thm:braidn=3}
Let $L$ be a link with two or more components that is not an unlink. Then
\[
\beta_3(L) =
\begin{cases}
3 &\mbox{ if } \beta_2(L) = 2; \\
\beta_2(L) &\mbox{ otherwise.}
\end{cases}
\]
\end{theorem}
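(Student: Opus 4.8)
The plan is to prove the two cases separately, leveraging the fact (from Theorem~\ref{thm:tripleAlexander}) that every link with two or more components admits a triple-crossing braid representation, together with the general inequality $\beta_2(L) \le \beta_3(L)$ from the first theorem in this section. The lower bound $\beta_3(L) \ge \beta_2(L)$ is immediate, since any triple-crossing braid on $k$ strings breaks into a double-crossing braid on $k$ strings. So the entire content lies in producing triple-crossing representations that meet the target string counts, i.e. establishing the matching upper bounds.

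For the main case where $\beta_2(L) \ge 3$, I would start with a double-crossing braid realizing the index $\beta_2(L)$ and apply the construction in Lemma~\ref{lem:tripleLevel}, which converts a double-crossing braid on $m \ge 3$ strings into a sequence of triple crossings followed by a double-crossing braid in level position, without adding strings. The delicate point is that Lemma~\ref{lem:evenOdd} (invoked inside Lemma~\ref{lem:tripleLevel}) may call for stabilizations to guarantee that the top of the braid alternates between odd and even components. I would argue that when $\beta_2(L) \ge 3$ and $L$ has at least two components, the existing strings can be rearranged (by conjugation, which preserves the string count) to achieve the required alternating pattern, so that no extra string is forced. The key observation is that with two or more components and at least three strings, one always has enough strings of each parity class to interleave them, invoking the remark following Lemma~\ref{lem:evenOdd} about parity of positions. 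This yields a triple-crossing braid on exactly $\beta_2(L)$ strings, giving $\beta_3(L) \le \beta_2(L)$, and combined with the reverse inequality, equality.

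For the special case $\beta_2(L) = 2$, the link is a $2$-braid closure with two components (so it is a $(2,q)$ torus link for some even $q$). A triple crossing requires at least three strands by definition, so $\beta_3(L) \ge 3$; I would note this is strictly larger than $\beta_2(L) = 2$. For the upper bound $\beta_3(L) \le 3$, I would stabilize the $2$-braid once to obtain a $3$-braid (still a double-crossing braid on three strings with the same closure), and then apply Lemma~\ref{lem:tripleLevel} directly, since three strings suffice for that lemma. This produces a triple-crossing braid on exactly three strings, establishing $\beta_3(L) = 3$.

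The main obstacle I anticipate is verifying that in the case $\beta_2(L) \ge 3$ the alternation-and-leveling procedure of Lemmas~\ref{lem:evenOdd} and~\ref{lem:tripleLevel} can genuinely be carried out \emph{without} increasing the number of strings beyond $\beta_2(L)$. The statements of those lemmas permit stabilizations ``if we do not have enough strands,'' so the crux is a careful counting argument showing that a minimal double-crossing braid with at least two components and at least three strings never actually triggers such a stabilization. I would handle this by examining how many strings belong to the odd-component class versus the even-component class and checking that the conjugation moves in Lemma~\ref{lem:evenOdd} only permute existing strings rather than adding new ones; the only situation demanding a new strand is the degenerate $\beta_2(L)=2$ case already handled separately.
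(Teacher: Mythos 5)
Your proposal follows essentially the same route as the paper: the paper's entire proof is the observation that the construction behind Theorem~\ref{thm:tripleAlexander} takes a double-crossing braid on at least $3$ strings and returns a triple-crossing braid on the same number of strings, which together with $\beta_2(L) \le \beta_3(L)$ and the fact that a triple crossing needs three strands yields both cases. The one point you flag as delicate --- whether Lemma~\ref{lem:evenOdd} can force a stabilization when $\beta_2(L) \ge 3$ --- is genuine but is left equally unaddressed by the paper, which simply asserts that the string count is preserved. Be aware, though, that your closing claim (that only the $\beta_2(L)=2$ case demands a new strand) is asserted rather than proved: for example, a $4$-string braid whose two components use $3$ strings and $1$ string respectively cannot have its components alternated into the two odd and two even positions without stabilizing, so if you want your write-up to be more rigorous than the paper's you would need to supply the counting argument or a workaround for such cases.
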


\begin{proof}
The proof of Theorem~\ref{thm:tripleAlexander} required that the double crossing braid have at least 3 strings, and gave us a triple crossing braid with the same number of strings.
\end{proof}

By this we can see that $\beta_3(L) \le \beta_n(L)$ for any odd $n$. However, this can also be seen by noting that any $n$-crossing can be decomposed into triple crossings; any $n$-crossing is a permutation of even strings and a permutation of odd strings, each of which can be realized as a product of triple crossings.

\begin{theorem}
\label{thm:braidOdd}
Let $L$ be a link with two or more components that is not an unlink. Let $n \le 205$, and $m \ge n+3$.
\begin{itemize}
\setlength\itemsep{0.3em}

\item[(i)]
If $n=4k+3$, then we have $\beta_n(L) \le \beta_m(L)$.

\item[(ii)]
If $n=8k+5$, then we have $\beta_n(L) \le \beta_m(L) + 1$.

Moreover, if $m=4k+1$, then we have $\beta_n(L) \le \beta_m(L)$.

\item[(iii)]
If $n=8k+1$, then we have $\beta_n(L) \le \beta_m(L) + 3$.

Moreover, if $m=8k+1$, then we have $\beta_n(L) \le \beta_m(L)$.
\end{itemize}
\end{theorem}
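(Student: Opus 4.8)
The plan is to run the argument of Theorem~\ref{thm:braidEven} in the odd setting, replacing the symmetric and alternating groups by the parity-preserving subgroups appropriate to odd crossings, and replacing ordinary stabilizations by the $3$-stabilizations of Section~\ref{sec:oddAlexander}. The computational input is the odd counterpart of the generation facts used for Theorem~\ref{thm:braidEven}: for each $n \le 205$ in the relevant residue class I would check in Mathematica that the $n$-crossing permutations over $S_{n+3}$ generate the appropriate subgroup of the parity-preserving group acting separately on the odd-indexed and the even-indexed strings. Concretely, for $n=4k+3$ they should generate the full parity-preserving group, for $n=8k+5$ its index-two subgroup of even permutations, and for $n=8k+1$ the subgroup that is even on the odd strings and even on the even strings. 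By Lemma~\ref{lem:levelPosition}, each such statement says that the corresponding triple crossings (single crossings for $4k+3$; disjoint pairs for $8k+5$; same-parity pairs for $8k+1$, as in Lemmas~\ref{lem:sameParity4k+3}, \ref{lem:sameParity4k+1} and~\ref{lem:pair8k+5}) can be produced as honest $n$-crossings whenever we have at least $n+3$ strings.

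First I would fix $m \ge n+3$ and take an $m$-crossing braid on $\beta_m(L)$ strings. Since $L$ is not an unlink, such a braid has at least one $m$-crossing and hence at least $m$ strings, so $\beta_m(L) \ge m \ge n+3$; thus the generation result over $S_{n+3}$ is available inside $S_{\beta_m(L)}$. Breaking each $m$-crossing into double crossings and then applying Lemma~\ref{lem:tripleLevel} and the proof of Theorem~\ref{thm:tripleAlexander} (exactly as in Theorem~\ref{thm:braidn=3}) produces a triple-crossing braid for $L$ on the same $\beta_m(L)$ strings. Reassembling these triple crossings into $n$-crossings by the generation result and Lemma~\ref{lem:levelPosition} then costs no further strings; the only strings I might add come from the parity corrections below.

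The remaining work is the parity bookkeeping that separates the three cases. For $n=4k+3$ there is no obstruction, since a single triple crossing of either parity is directly producible (Case~1 of Theorem~\ref{thm:oddAlexander}), giving $\beta_n(L) \le \beta_m(L)$. For $n=8k+5$ the triple crossings must be grouped into disjoint pairs, so their total number must be even; one $3$-stabilization fixes an odd count at the cost of a single string, giving $\beta_n(L) \le \beta_m(L)+1$. For the sharp case $m=4k+1$ I would instead decompose each (odd, hence parity-preserving) $m$-crossing \emph{directly} into triple crossings: such an $m$-crossing is an even permutation and so uses an even number of triple crossings, making the total even automatically, whence $\beta_n(L) \le \beta_m(L)$. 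For $n=8k+1$ both the number of odd-string and the number of even-string triple crossings must be made even; repeating verbatim the parity-of-string-count analysis in Case~3 of Theorem~\ref{thm:oddAlexander} shows that at most three $3$-stabilizations suffice, giving $\beta_n(L) \le \beta_m(L)+3$, while for the sharp case $m=8k+1$ the direct decomposition contributes $2k$ transpositions of each parity per $m$-crossing, hence an even number of triple crossings of each parity, so no stabilization is needed and $\beta_n(L) \le \beta_m(L)$.

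The hard part will be the generation step, and in particular confirming that $n+3$ strings genuinely suffice for every $n \le 205$ in each class: one must match the $n$-crossing permutations against the correct parity-refined subgroup, since for $8k+5$ and $8k+1$ they generate only proper subgroups of the full parity-preserving group and a check against the larger group would fail. The secondary subtlety is the sharp stabilization count, especially the bound of three in the $8k+1$ case, which depends on being able to target a $3$-stabilization to a prescribed parity; this is exactly the bookkeeping carried out in the proof of Theorem~\ref{thm:oddAlexander}, which I would reuse directly.
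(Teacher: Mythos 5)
Your proposal matches the paper's proof in all essentials: the same Mathematica generation facts over $S_{n+3}$ (full parity-preserving group $S_{2k+3}\times S_{2k+3}$ for $n=4k+3$, its even half for $n=8k+5$, and $A_{4k+2}\times A_{4k+2}$ for $n=8k+1$), combined with Lemma~\ref{lem:levelPosition}, decomposition of the $m$-crossing braid into triple crossings, and the same parity bookkeeping via $3$-stabilizations for the $+1$ and $+3$ corrections and the sharp cases $m=4k+1$ and $m=8k+1$. Your slightly more explicit route for the decomposition step (through double crossings and Lemma~\ref{lem:tripleLevel} when $m$ is even) is only an elaboration of what the paper leaves terse, not a different argument.
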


\clearpage

\begin{proof}
\text{}
\begin{itemize}
\setlength\itemsep{0.3em}

\item[(i)]
We can show through computations in Mathematica that for all $n\le 203$ with $n=4k+3$, the $n$-crossing permutations over $S_{n+3} = S_{4k+6}$ generate $S_{2k+3} \times S_{2k+3}$. Each $S_{2k+3}$ corresponds to permutations of odd strings and permutations of even strings. By Lemma~\ref{lem:levelPosition}, this means that we can obtain every triple crossing as a product of $n$-crossings if we have $n+2$ or more strings. The rest of the proof follows as in the proof of Theorem~\ref{thm:braidEven} (i), but by decomposing the $m$-crossing braid into triple crossings.

\item[(ii)]
We can again show with Mathematica that for all such $n \le 205$, the $n$-crossing permutations $S_{n+3} = S_{8k+8}$ generate half of $S_{4k+4} \times S_{4k+4}$. Since we know that the $n$-crossing permutations must be even, this shows that the $n$-crossing permutations generate all pairs of triple crossings permutations. Thus, if we have $n+3$ or more strings, we can obtain every pair of triple crossings as a product of $n$-crossings. The rest of the proof follows as in the proof of Theorem~\ref{thm:braidEven} (ii), but by decomposing the $m$-crossing braid into triple crossings.

\item[(iii)]
We can again show with Mathematica that for all such $n \le 201$, the $n$-crossing permutations $S_{n+3} = S_{8k+4}$ generate $A_{4k+2} \times A_{4k+2}$. This shows that the $n$-crossing permutations generate all pairs of even-string transpositions, and all pairs of odd-string transpositions. Thus, if we have $n+3$ or more strings, we can obtain every pair of odd triple crossings and every pair of even triple crossings as a product of $n$-crossings. The rest of the proof follows as in (ii).
\end{itemize}
\end{proof}

As for the case with $n$ even, we suspect that the inequalities can be extended for all $n$.

The following inequalities hold for infinitely many $n$. This is done as in the proof of Theorem~\ref{thm:braidEvenAll}, by checking the number of strings that were necessary for the proof of Theorem~\ref{thm:oddAlexander}.

\begin{theorem}
\label{thm:braidOddAll}
Let $L$ be a link with two or more components that is not an unlink.
\begin{itemize}
\setlength\itemsep{0.3em}

\item[(i)]
Let $n=4k+3$. Then for any $m \ge 3n+5$, we have $\beta_n(L) \le \beta_m(L)$.

\item[(ii)]
Let $n=8k+5$. Then for any $m \ge 3n-2$, we have $\beta_n(L) \le \beta_m(L) + 1$.

Moreover, if $m=4k+1$, then we have $\beta_n(L) \le \beta_m(L)$.

\item[(iii)]
Let $n=8k+1$. Then for any $m \ge 3n-2$, we have $\beta_n(L) \le \beta_m(L) + 3$.

Moreover, if $m=8k+1$, then we have $\beta_n(L) \le \beta_m(L)$.
\end{itemize}
\end{theorem}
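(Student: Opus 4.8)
The plan is to mirror the structure of the proof of Theorem~\ref{thm:braidEvenAll}, replacing the reference to the even Alexander theorem with the odd version and carefully reading off the string counts required in the proof of Theorem~\ref{thm:oddAlexander}. The key observation is that each part of Theorem~\ref{thm:oddAlexander} was proved using a particular lemma that guaranteed we could realize the relevant triple-crossing permutations as products of $n$-crossing permutations, and each such lemma came with an explicit lower bound on the number of strings. For part (i), the case $n=4k+3$ was handled using Lemma~\ref{lem:sameParity4k+3}, which required $m \ge 3n+5$ strings; for parts (ii) and (iii), the cases $n=8k+5$ and $n=8k+1$ were handled using Lemma~\ref{lem:sameParity4k+1} and Lemma~\ref{lem:pair8k+5}, both of which required at least $3n-2$ strings. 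These are precisely the thresholds appearing in the statement.

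The main step is then the same decomposition-and-rebuild argument used throughout Section~\ref{sec:braidIndices}. Given $m \ge 3n+5$ (respectively $m \ge 3n-2$), I would take an $m$-crossing braid realizing $\beta_m(L)$, note that since $L$ is not an unlink and is not a knot (it has two or more components), we have $\beta_m(L) \ge m$, and decompose each $m$-crossing into triple crossings, preserving the number of strings. By the proof of Theorem~\ref{thm:oddAlexander}, once we have at least the stated number of strings we can reassemble those triple crossings into $n$-crossings. This yields an $n$-crossing braid on $\beta_m(L)$ strings, giving $\beta_n(L) \le \beta_m(L)$ in part (i). For (ii) and (iii) the extra additive constants $+1$ and $+3$ account for the possibility that the decomposition into triple crossings does not come in disjoint pairs of the correct parity; as in Theorem~\ref{thm:braidEven}, one may need an extra $3$-stabilization (contributing $+1$) or, in the $8k+1$ case, up to three $3$-stabilizations to fix the parities of the even-string and odd-string crossing counts, contributing $+3$. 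The ``moreover'' refinements follow by observing that an $m$-crossing with $m=4k+1$ (respectively $m=8k+1$) already decomposes into triple crossings with the favorable parity, removing the need for any corrective stabilization.

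The only subtle point, and the step I expect to require the most care, is matching the parity bookkeeping from the proof of Theorem~\ref{thm:oddAlexander} exactly, so that the additive constants are tight. Specifically, in the $n=8k+5$ case one must verify that decomposing an $m$-crossing into triple crossings can leave the total triple-crossing count odd, forcing a single $3$-stabilization; and in the $n=8k+1$ case one must track both the even-string and odd-string triple-crossing counts, since the proof of Case~3 of Theorem~\ref{thm:oddAlexander} required both to be even and used up to two $3$-stabilizations in the generic situation, with the worst case giving the stated $+3$. For the ``moreover'' clauses I would check directly that a single $m$-crossing with $m=4k+1$ contributes an even number of triple crossings of each parity (so no correction is needed), and similarly that an $m=8k+1$ crossing already has the required parities, exactly paralleling the computation that $m=4k$ or $m=4k+1$ gives an even number of double crossings in Theorem~\ref{thm:braidEven}~(ii).
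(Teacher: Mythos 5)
Your proposal is correct and follows essentially the same route as the paper, whose own ``proof'' is precisely the one-line reduction you describe: repeat the argument of Theorem~\ref{thm:braidEvenAll} with the $m$-crossing braid decomposed into triple crossings, read off the string thresholds from the lemmas used in each case of Theorem~\ref{thm:oddAlexander}, and charge the additive constants $+1$ and $+3$ to the $3$-stabilizations needed to fix the parity of the triple-crossing counts, with the ``moreover'' clauses coming from the observation that an $m$-crossing of the indicated residue already decomposes with the favorable parities. The only caveat, which you inherit from the paper rather than introduce, is that Lemma~\ref{lem:pair8k+5} as stated requires $m \ge 4n-4$ strings rather than $3n-2$, so the threshold in part (ii) is not cleanly justified by the cited lemmas; this mismatch is already present in Case~2 of the paper's proof of Theorem~\ref{thm:oddAlexander}.
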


Finally we present an inequality that holds for all $n \ge 8$, even or odd.

\begin{corollary}
Let $L$ be a link that is not an unlink. Then for all $n \ge 8$,
\[
\beta_n(L) \le \beta_{4n-3}(L).
\]
\end{corollary}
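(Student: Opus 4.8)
The plan is to apply Theorem~\ref{thm:braidEvenAll} when $n$ is even and Theorem~\ref{thm:braidOddAll} when $n$ is odd, in each case taking the larger index to be $m = 4n-3$. Both theorems supply the sharp inequality $\beta_n(L) \le \beta_m(L)$, with no additive constant, provided (a) $m$ meets the lower bound attached to the relevant part of the theorem, and (b) either that part carries no residue restriction on $m$ (the cases $n=4k+2$ and $n=4k+3$) or $m$ lies in the residue class named by the ``moreover'' clause (the cases $n=4k$, $n=8k+5$, $n=8k+1$). Thus the whole statement reduces to a case analysis on $n \bmod 8$ together with two elementary verifications.

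The key arithmetic fact is that $4n-3 \equiv 1 \pmod 4$ for every $n$, and $4n-3 \equiv 1 \pmod 8$ whenever $n$ is odd. These are exactly the residue hypotheses needed: the ``moreover'' clause of Theorem~\ref{thm:braidEvenAll}(ii) asks for $m \equiv 0$ or $1 \pmod 4$, that of Theorem~\ref{thm:braidOddAll}(ii) asks for $m \equiv 1 \pmod 4$, and that of Theorem~\ref{thm:braidOddAll}(iii) asks for $m \equiv 1 \pmod 8$; each is satisfied by $m = 4n-3$. I would then check the lower bounds. For $n=4k+2$ and $n=4k$ the requirement is $4n-3 \ge 3n+1$, i.e.\ $n \ge 4$; for $n=8k+5$ and $n=8k+1$ it is $4n-3 \ge 3n-2$, i.e.\ $n \ge 1$; and for $n=4k+3$ it is $4n-3 \ge 3n+5$, i.e.\ $n \ge 8$. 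The binding constraint is the last one, which is precisely why the corollary begins at $n \ge 8$. Since the five residue classes $4k+2$, $4k$, $4k+3$, $8k+5$, $8k+1$ exhaust all integers (using $4k+1 = 8k'+1$ or $8k'+5$), every $n \ge 8$ falls into a case for which both verifications succeed, giving $\beta_n(L) \le \beta_{4n-3}(L)$.

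The only point needing separate handling is when $n$ is odd and $L$ is a knot, since Theorem~\ref{thm:braidOddAll} assumes at least two components. There both indices are infinite: $n$ odd forces $4n-3$ odd, so $\beta_n(L) = \beta_{4n-3}(L) = \infty$ and the inequality holds trivially. I expect no genuine obstacle in this proof --- it is bookkeeping --- and the only thing to get right is the congruence computation that routes each residue class of $n$ into the matching ``moreover'' clause, together with noting that the threshold $n \ge 8$ is forced by, and only by, the $4k+3$ lower bound of $3n+5$ (indeed $n=7$ already fails, since $4\cdot 7 - 3 = 25 < 26 = 3\cdot 7 + 5$).
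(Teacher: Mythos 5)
Your proof is correct and follows essentially the same route as the paper's: verify that $4n-3$ clears the largest of the string-count thresholds ($3n+5$, which forces $n\ge 8$ via the $n=4k+3$ case) and check that $4n-3 \equiv 1 \pmod 4$ (and $\equiv 1 \pmod 8$ for odd $n$) so that each residue class of $n$ lands in the appropriate ``moreover'' clause of Theorems~\ref{thm:braidEvenAll} and~\ref{thm:braidOddAll}. Your extra remark disposing of the case of a knot with $n$ odd (both sides infinite) is a small point the paper leaves implicit, but it does not change the argument.
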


\begin{proof}
Observe that when $n \ge 8$, we have $4n-3 \ge 3n+5$. Observe the following about $4n-3$. When $n=4k$, we have $4n-3 = 16k-3 = 8(2k-1) + 5$. When $n=8k+5$, we have $4n-3 = 32k+20-3 = 8(4k+2) + 1$. Finally, when $n=8k+1$, we have $4n-3 = 32k+4-3 = 8(4k) + 1$. Then the result follows directly from Theorem~\ref{thm:braidEvenAll} and Theorem~\ref{thm:braidOddAll}.
\end{proof}

\FloatBarrier 

\end{document}